\newtheorem{Th}{Theorem}[section] 
\newtheorem{Prop}{Proposition}[section]   
\newtheorem{Lem}{Lemma}[section]   
\newtheorem{Coro}{Corollary}[section]   
\newtheorem{Rem}{Remark}[section]
\newcommand{\finishproof}{\hfill $\Box$ \vspace{1 mm}}
\newcommand{\R}{\mathbb{R}}
\newcommand{\Z}{\mathbb{Z}}
\newcommand{\C}{\mathbb{C}}
\newcommand{\T}{\mathbb{T}}
\newcommand{\s}{{\rm S}}
\newcommand{\x}{\langle x\rangle}
\newcommand{\A}{{\mathcal A}}
\newcommand{\AH}{{\mathbb A}}
\newcommand{\id}{\text{\rm id}}
\newcommand{\Id}{\text{\rm Id}}
\newcommand{\const}{\text{\rm const}}
\newcommand{\tr}{\text{\rm tr}\,}
\newcommand{\Div}{\mathop{\rm div}\nolimits}
\newcommand{\dt}[1]{\accentset{\mbox{\bfseries .}}{#1}}
\newcommand{\cdt}{\mbox{\bfseries .}}
\begin{document}

\title{Spatial asymptotic expansions in the incompressible Euler equation}   
 
\author{R. McOwen and P. Topalov} 

\maketitle

\begin{abstract}  
In this paper we prove that the Euler equation describing the motion of an ideal fluid in $\R^d$
is well-posed in a class of functions allowing spatial asymptotic expansions as $|x|\to\infty$
of any a priori given order. These asymptotic expansions can involve log terms and lead to a family of 
conservation laws. Typically, the solutions of the Euler equation with rapidly decaying initial data develop non-trivial 
spatial asymptotic expansions of the type considered here.
\end{abstract}   


\section{Introduction}\label{sec:introduction}
Consider the Euler equation for the motion of an incompressible perfect fluid in $\R^d$ for $d\ge 2$:
\begin{equation}\label{eq:euler}
\left\{
\begin{array}{l}
u_t+u\cdot \nabla u=-\nabla{\rm p},\quad \Div u =0,\\
u|_{t=0}=u_0,
\end{array}
\right.
\end{equation}
where the velocity field $u(t,x)$  is to be determined from the given initial condition $u_0(x)$.
Here ${\rm p}(t,x)$ is the scalar pressure; the divergence $\Div$ and the covariant derivative $\nabla$ are computed 
with respect to the Euclidean metric in $\R^d$ and the expression $u\cdot \nabla u$ stands for the
directional covariant derivative $\nabla_u u$.
Throughout the paper we identify vectors and covectors with the help of the Euclidean metric.
A classical result of Kato \cite{Kato2} states that if the initial data $u_0$ belongs to the Sobolev space $H^m$, 
$m>1+\frac{d}{2}$, of vector fields in $\R^d$ whose derivatives up to order $m$ are in $L^2$, then there exists 
$T>0$ so that equation \eqref{eq:euler} has a unique solution 
$u\in C^0\big([0,T],H^m\big)\cap C^1\big([0,T], H^{m-2}\big)$. 
Note that the condition $u_0\in H^m$ for $m>d/2$ implies that $u_0(x)\to 0$ as $|x|\to\infty$ (and similarly 
for $u(t,x)$); this leaves out, for example, the ``physically interesting case of flows having uniform velocity at infinity'' 
\cite{Kato1}.

\medskip

In this paper, we consider spaces of vector fields that allow {\em asymptotic expansions} at infinity of any a priori 
given order. Note that these vector fields do not necessarily decay at infinity. The simplest form of such asymptotic 
expansion is
\begin{equation}\label{eq:u_0-asymptotic}
a_0(\theta)+\frac{a_1(\theta)}{r}+\cdots+\frac{a_N(\theta)}{r^N}+o\Big(\frac{1}{r^N}\Big) \quad
{\rm as}\ |x|\to\infty,
\end{equation}
where $N$ is a non-negative integer, $r\equiv r(x):=|x|$, $\theta\equiv\theta(x):=\frac{x}{|x|}$ is a point on the unit 
sphere $\s^{d-1}$ in $\R^d$, 
and the coefficients $a_j$ for $j=0,...,N$ are continuous vector-valued functions $a_j : \s^{d-1}\to\R^d$.
Under certain technical assumptions on $a_1,\dots, a_N$ and the remainder in \eqref{eq:u_0-asymptotic}, 
we have introduced and studied in \cite{McOwenTopalov2} Banach spaces $\A^{m,p}_{N}$ of such vector fields that lie
inside $H^{m,p}_{loc}$ with $m$ a non-negative integer and $1<p<\infty$; 
the details of these {\em asymptotic spaces} $\A^{m,p}_{N}$ are summarized in the first section of this paper. 
Note that the spaces $\A^{m,p}_{N}$ are {\em not} preserved by the Euler flow 
(see Example 1 in Appendix \ref{sec:examples} for a counter example), so we generalize \eqref{eq:u_0-asymptotic} by
allowing logarithmic terms as follows:
\begin{equation}\label{eq:u_0-asymptotic2}
a_0(\theta)+\frac{a_1^0(\theta)+a_1^{1}(\theta)\log r}{r}+\cdots+
\frac{a_N^0(\theta)+\cdots+a_N^{N}(\theta)(\log r)^{N}}{r^N}+o\Big(\frac{1}{r^N}\Big) \quad
{\rm as}\ |x|\to\infty.
\end{equation}
In Appendix B of \cite{McOwenTopalov2}, we introduced and studied a class of Banach spaces 
$\A^{m,p}_{N;\ell}$ which for $\ell=0$ consist of vector fields having the asymptotic behavior in 
\eqref{eq:u_0-asymptotic2}; the details of the asymptotic spaces $\A^{m,p}_{N;\ell}$ are also summarized 
in the next section. One of the main results of this paper implies that the asymptotic spaces $\A^{m,p}_{N;0}$ 
{\em are preserved} by the Euler flow.

\medskip

Let us now formulate our first result. Let  $B_{\A^{m,p}_{N;0}}(\rho)$
be the open ball of radius $\rho>0$ centered at the origin in $\A^{m,p}_{N;0}$.
Denote by $\accentset{\,\,\,\circ}{\A}^{m,p}_{N;0}$ the closed subspace of {\em divergence free} vector fields 
in ${\A}^{m,p}_{N;0}$.

\begin{Th}\label{th:main} 
Assume that $m>3+\frac{d}{p}$.
For any given $\rho>0$ there exists $T>0$ such that for any divergence free vector field
$u_0\in B_{\A^{m,p}_{N;0}}(\rho)$ there exists a unique solution 
$u\in C^0\big([0,T],\accentset{\,\,\,\circ}\A^{m,p}_{N;0}\big)\cap 
C^1\big([0,T],\accentset{\,\,\,\circ}\A^{m-1,p}_{N;0}\big)$ 
of the Euler equation \eqref{eq:euler} that depends continuously on the initial data in the sense that 
the data-to-solution map
\[
u_0\mapsto u,\,\,B_{\A^{m,p}_{N;0}}(\rho)\cap{\accentset{\,\,\,\circ}{\A}}^{m,p}_{N;0}\to C^0\big([0,T],
\accentset{\,\,\,\circ}\A^{m,p}_{N;0}\big)\cap C^1\big([0,T],\accentset{\,\,\,\circ}\A^{m-1,p}_{N;0}\big)
\]
is continuous. In particular, the space $\accentset{\,\,\,\circ}\A^{m,p}_{N;0}$ is preserved by the Euler flow.
\end{Th}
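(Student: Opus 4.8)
The plan is to follow the geometric/Lagrangian approach pioneered by Ebin–Marsden and adapted to weighted and asymptotic function spaces, rewriting the Euler equation as an ODE on the group of volume-preserving diffeomorphisms that are asymptotic to the identity in the scale $\A^{m,p}_{N;0}$. Concretely, I would introduce the set $\D^{m,p}_{N;0}$ of diffeomorphisms $\varphi=\id+w$ with $w\in\A^{m,p}_{N;0}$ and $\det(d\varphi)>0$ bounded away from zero, show it is a Banach manifold and a topological group, and let $\D^{m,p}_{\mu,N;0}$ be the subset of volume-preserving ones, whose tangent space at $\id$ is $\accentset{\,\,\,\circ}{\A}^{m,p}_{N;0}$. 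The crucial analytic input is that the asymptotic spaces $\A^{m,p}_{N;0}$ form a \emph{Banach algebra under pointwise multiplication} and are closed under composition with diffeomorphisms in $\D^{m,p}_{N;0}$, with the composition and inversion maps enjoying the usual loss-of-derivative smoothness: $(\varphi,v)\mapsto v\circ\varphi$ is $C^k$ as a map $\D^{m,p}_{N;0}\times\A^{m+k,p}_{N;0}\to\A^{m,p}_{N;0}$, and $\varphi\mapsto\varphi^{-1}$ is continuous on $\D^{m,p}_{N;0}$ and smooth $\D^{m+k,p}_{N;0}\to\D^{m,p}_{N;0}$. These facts should be quotable from or provable by the same arguments as in \cite{McOwenTopalov2}, using that differentiation maps $\A^{m,p}_{N;0}\to\A^{m-1,p}_{N+1;0}$ and that the log-augmented expansions are stable under the nonlinear operations (this is exactly why the $\ell$-index and the $\log r$ terms were built in).

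Next I would set up the Euler equation in Lagrangian coordinates. Writing $\varphi(t)$ for the flow map and $v=\dt\varphi\circ\varphi^{-1}$ for the Eulerian velocity, the pressure is recovered from the divergence-free constraint by $\mathrm{p}=-\Delta^{-1}\Div(v\cdot\nabla v)$, equivalently by applying the Leray projector $P$ onto divergence-free fields. The key second step is therefore to establish mapping properties of $\Delta^{-1}$, $\nabla\Delta^{-1}\Div$ and $P$ on the asymptotic spaces: one must show that $\nabla\mathrm{p}=\nabla\Delta^{-1}\Div(v\cdot\nabla v)$ lands back in $\accentset{\,\,\,\circ}{\A}^{m-1,p}_{N;0}$ when $v\in\accentset{\,\,\,\circ}{\A}^{m,p}_{N;0}$. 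This is the heart of the matter, since the Biot–Savart-type kernel is nonlocal and a priori could either destroy the asymptotic structure or, worse, generate the very log terms and $1/r$ tails that motivate \eqref{eq:u_0-asymptotic2}; the claim is that, for divergence-free data, $P$ preserves $\accentset{\,\,\,\circ}{\A}^{m,p}_{N;0}$ exactly because the log/asymptotic hierarchy was enlarged to absorb them. Conjugating by $\varphi(t)$, the Euler equation becomes a second-order ODE $\dt\varphi=v$, $\dt v = -(\nabla\mathrm{p})\circ\varphi = F(\varphi,v)$, or equivalently a first-order ODE on $T\D^{m,p}_{\mu,N;0}$ with right-hand side built from composition, multiplication, and the pressure operator conjugated by $\varphi$ and $\varphi^{-1}$.

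The third step is to verify that this vector field $F$ is (at least) $C^1$, indeed $C^\infty$, on $T\D^{m,p}_{\mu,N;0}$ — here the threshold $m>3+\tfrac dp$ enters: it guarantees $\A^{m-1,p}_{N;0}\hookrightarrow C^1$ so that $\D^{m-1,p}_{N;0}$ consists of genuine $C^1$-diffeomorphisms and the composition/inversion lemmas apply with room to spare (two derivatives for the second-order structure plus Sobolev embedding). Smoothness of $F$ reduces to smoothness of $(\varphi,v)\mapsto v\circ\varphi$, of $\varphi\mapsto\varphi^{-1}$, of pointwise products, and of the fixed linear operator $P$; the "no loss of derivatives" for the composed vector field is the standard Ebin–Marsden miracle, valid here because the Lagrangian nonlinearity hides all spatial derivatives inside the (smooth-in-the-group, derivative-losing-but-compensated) composition operator. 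Once $F$ is $C^1$, the Picard–Lindelöf theorem on the Banach manifold $T\D^{m,p}_{\mu,N;0}$ gives, for each $\rho>0$, a uniform existence time $T=T(\rho)>0$ on the ball $B_{\A^{m,p}_{N;0}}(\rho)$, uniqueness, and $C^1$ dependence of $(\varphi,v)$ on the initial velocity $u_0=v(0)$. Finally I would transfer back to Eulerian variables: $u(t)=v(t)\circ\varphi(t)^{-1}$, and since $\varphi\in C^1\big([0,T],\D^{m,p}_{\mu,N;0}\big)$ and $v\in C^0\big([0,T],\accentset{\,\,\,\circ}{\A}^{m,p}_{N;0}\big)\cap C^1\big([0,T],\accentset{\,\,\,\circ}{\A}^{m-1,p}_{N;0}\big)$, the composition lemma (which costs one derivative) yields $u\in C^0\big([0,T],\accentset{\,\,\,\circ}{\A}^{m,p}_{N;0}\big)\cap C^1\big([0,T],\accentset{\,\,\,\circ}{\A}^{m-1,p}_{N;0}\big)$, and continuity (not merely $C^1$) of $u_0\mapsto u$ follows because the Lagrangian-to-Eulerian map is continuous even though it loses a derivative. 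The main obstacle, and the step deserving the most care, is the second one: proving that the Leray projector and the operator $\nabla\Delta^{-1}\Div$ map $\accentset{\,\,\,\circ}{\A}^{m,p}_{N;0}$ into $\A^{m-1,p}_{N;0}$ — i.e. that the asymptotic expansion with its log terms is exactly closed under the pressure nonlinearity — since everything else is a by-now-routine, if laborious, application of the calculus of composition operators on Banach manifolds of maps.
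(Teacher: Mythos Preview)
Your proposal is correct in its overall architecture---the Arnold/Ebin--Marsden Lagrangian reformulation, smoothness of the conjugated Euler vector field, Picard--Lindel\"of, and transfer back to Eulerian coordinates---and this is indeed the strategy the paper follows. There is, however, one tactical difference worth flagging.

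You propose to work directly on the volume-preserving submanifold $\D^{m,p}_{\mu,N;0}$ and to use the Leray projector $P$. The paper instead works on the \emph{full} group $\A D^{m,p}_{0,N;0}$: it replaces $\Div(u\cdot\nabla u)$ by $Q(u):=\tr\big([\mathrm{d}u]^2\big)$, which agrees with the former when $\Div u=0$ but is defined for \emph{all} $u$ and involves only first derivatives. This lets the paper define the Euler vector field $\mathcal{E}(\varphi,v)=\big(v, R_\varphi\circ\Delta^{-1}\circ\nabla\circ Q\circ R_{\varphi^{-1}}(v)\big)$ globally on $T(\A D^{m,p}_{0,N;0})$, prove it is real-analytic there, and only \emph{afterwards} check (via a transport argument for $\Div u$) that divergence-free initial data yield divergence-free solutions. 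This ordering avoids having to establish the submanifold structure of the volume-preserving diffeomorphisms up front; in fact, the paper derives that submanifold structure \emph{from} the exponential map of $\mathcal{E}$ in a later section, explicitly noting that it is not used in the proof of the main theorem. Your route is viable too---it is closer to the original Ebin--Marsden setup---but you would need to supply a Hodge-type splitting of $\A^{m,p}_{N;0}$ independently to get the submanifold chart, which is extra work the paper sidesteps.

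One minor correction: the threshold $m>3+d/p$ is not primarily about $\A^{m-1,p}\hookrightarrow C^1$ (that only needs $m>2+d/p$). It arises because, in the factorization $\mathcal{E}_2=(R_\varphi\circ\Delta^{-1}\circ R_{\varphi^{-1}})\circ(R_\varphi\circ\nabla\circ Q\circ R_{\varphi^{-1}})$, the inner factor lands in a space of regularity $m-2$, and one needs $m-2>1+d/p$ for the right-translation $R_\varphi$ to act as an isomorphism on that space.
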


\begin{Rem}
For any $t\in[0,T]$ the pressure term ${\rm p}(t)$ has an asymptotic expansion as $|x|\to\infty$ of a special
form (see formula \eqref{eq:pressure} and Remark \ref{rem:laplace_operator} for the definition of the corresponding
asymptotic space).
\end{Rem}

\begin{Rem}
A simple modification of our proof shows that a variant of Theorem \ref{th:main} also holds if one allows an 
external force of the form ${\rm f}\in C^0\big([0,\tau],\A^{m+1,p}_{N;0}\big)$, $\tau>0$. 
More specifically, one can prove that for any 
divergence free vector field $q\in\accentset{\,\,\,\circ}\A^{m,p}_{N;0}$ there exist an open neighborhood
${\mathcal U}(q)$ of $q$ in $\accentset{\,\,\,\circ}\A^{m,p}_{N;0}$ and $0<T\le\tau$ so that 
for any $u_0\in{\mathcal U}(q)$ there exists a unique solution 
$u\in C^0\big([0,T],\A^{m,p}_{N;0}\big)\cap C^1\big([0,T],\A^{m-1,p}_{N;0}\big)$ 
of the Euler equation $u_t+u\cdot \nabla u=-\nabla{\rm p}+{\rm f}$, $\Div u =0$, that depends continuously on 
the initial data in the sense described in Theorem \ref{th:main}.
\end{Rem}

\begin{Rem}\label{rem:refined_main_thm}
Theorem \ref{th:main} admits a refinement to asymptotic spaces $\A^{m,p}_{n,N;0}$, $1\le n\le N$, which are defined 
by requiring that the coefficients $a_0$ and $a_k^j$ for $1\le k\le n-1$ and $0\le j\le k$ in \eqref{eq:u_0-asymptotic2}
vanish (cf.\ \eqref{eq:asymptotic_log_space} and Section \ref{sec:the_group} for more detail on this space).
In fact, the statement of Theorem \ref{th:main} holds with $\A^{m,p}_{N;0}$ replaced by $\A^{m,p}_{n,N;0}$, 
provided $0\leq n\leq d+1$. If the initial data belongs to $\A^{m,p}_{n,N;0}$ with $n> d+1$ then the solution
in Theorem \ref{th:main} evolves in the space $\A^{m,p}_{d+1,N;0}$. Moreover, such a solution typically develops
a non-vanishing leading asymptotic term $a_{d+1}/r^{d+1}$ (see Example 2 in Appendix \ref{sec:examples} as well 
as \cite{DobrShaf}).
The proof of Theorem \ref{th:main} for the spaces $\A^{m,p}_{n,N;0}$, $1\le n\le N$, does not significantly 
vary from the case $n=0$, so we will not discuss the details. We only mention that the condition $0\le n\leq d+1$ 
above appears as a consequence of Lemma \ref{lem:inverse_laplacian_remainder_space} and a generalization of
Lemma \ref{lem:vanishing_lemma}.
\end{Rem}

A few more comments about Theorem \ref{th:main} are in order. First of all, it is natural to wonder whether the 
asymptotic solutions in Theorem \ref{th:main} persist for all time in the case $d=2$ when global existence results 
are known (\cite{Wol,Kato1}): indeed we expect this to be the case. Secondly, taking $n>d/2$ in 
Remark \ref{rem:refined_main_thm}, we see that our solutions have finite kinetic energy
$\int_{\R^d} |u(x,t)|^2\,dx<\infty$. Finally, we observe that, unlike other non-linear evolution equations such as KdV, 
the modified KdV, Camassa-Holm, and Degasperis-Procesi equations, the solution map of the Euler equation 
\eqref{eq:euler} does not preserve the Schwartz class ${\mathcal S}$, or more generally, the asymptotic space
$\A^{m,p}_{n,N;0}$ with $n>d+1$ (see Remark \ref{rem:refined_main_thm}). More specifically, solutions with initial data 
in ${\mathcal S}$ evolve in the asymptotic space $\A^{m,p}_{d+1,N;0}$ and typically develop a non-vanishing leading 
asymptotic term $a_{d+1}/r^{d+1}$.
In this way, the asymptotic spaces appear {\em naturally} in the Euler dynamics of the velocity field, and 
the asymptotic space $\A^{m,p}_{d+1,N;0}$ in particular can be considered as a natural ``minimal" extension 
of the Schwartz class that stays invariant with respect to the Euler flow.

\medskip\medskip

\noindent{\em Asymptotic conserved quantities.} Take $u_0\in\A^{m,p}_{N;0}$. Then, by 
Theorem \ref{th:main}, there exist $T>0$ and a unique solution 
$u\in C^0\big([0,T],\accentset{\,\,\,\circ}\A^{m,p}_{N;0}\big)\cap 
C^1\big([0,T],\accentset{\,\,\,\circ}\A^{m-1,p}_{N;0}\big)$ 
of the Euler equation \eqref{eq:euler}. 
In particular, for any $t\in[0,T]$ we have $u(t)\in\A^{m,p}_{N;0}$ and hence $u(t)$ allows an asymptotic expansion
of the form \eqref{eq:u_0-asymptotic2}. In general, the coefficients 
\[
a_k^j(t)\in C^0\big(\s^{d-1},\R^d\big),\quad 0\le k\le N,\,\,\,\,0\le j\le k, 
\]
of this asymptotic expansion depend on $t\in[0,T]$.
We will see, however, that the leading coefficient $a_0\equiv a^0_0$ in this asymptotic expansion is 
independent of $t$. Hence,  for any $t\in[0,T]$ and for any $\omega\in\s^{d-1}$, the limit
\[
\lim_{\varrho\to\infty} u(t,\varrho\,\omega)
\]
exists and is independent of $t$. In this way, $a_0$ is a {\em conserved quantity} 
(or, equivalently, an {\em integral}) of the Euler equation.
More generally, we have additional conserved quantities:
Recall that the eigenvalues of the ``positive''  Laplace-Beltrami operator $-\Delta_S$ on the unit sphere 
$\s^{d-1}\subseteq\R^d$, equipped with the restriction of the Euclidean metric on $\R^d$ to $\s^{d-1}$,
have the form
\[
\mu_l=l(l+d-2),\quad l=0,1,2,...\,,
\]
and the space of eigenfunctions with eigenvalue $\mu_l$ coincides with the space of homogeneous harmonic
polynomials of degree $l$ in $\R^d$ restricted to $\s^{d-1}$ (see e.g.\ \cite{Shubin}). We have the following
\begin{Prop}\label{prop:integrals}
Assume that $u_0\in\accentset{\,\,\,\circ}\A^{m,p}_{N;0}$ and let  $u$ be the solution of 
the Euler equation \eqref{eq:euler} given by Theorem \ref{th:main}.
\begin{itemize}
\item[$(a)$] For any $0\le k<\max\{1,d-2\}$ the coefficient $a_k^k : \s^{d-1}\to\R^d$ is independent of $t\in[0,T]$;
\item[$(b)$] For any $\max\{1,d-2\}\le k\le N$, $l\ge 0$, $l\ne k-d+2$, and for any homogeneous harmonic polynomial 
$h$ of degree $l$ in $\R^d$,  the quantity 
\begin{equation}\label{eq:integrals}
\big(a_k^k,h|_{\s^{d-1}}\big)_{\s^{d-1}}:=\int_{\omega\in\s^{d-1}} h(\omega)\,a_k^k(t,\omega)\,{\rm d}s(\omega),
\end{equation}
where ${\rm d}s$ is the Riemannian volume form on $\s^{d-1}$, is independent of $t\in[0,T]$.
\end{itemize}
\end{Prop}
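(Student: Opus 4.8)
The plan is to substitute the asymptotic expansion \eqref{eq:u_0-asymptotic2} for $u(t)$ into the Euler equation and track the evolution of the individual coefficients $a_k^j(t)$. The key observation is that the pressure $\mathrm{p}(t)$ is recovered from $u(t)$ by solving the Poisson equation $-\Delta\mathrm{p}=\mathrm{tr}\,(\nabla u)^2=\sum_{i,j}(\partial_i u^j)(\partial_j u^i)$ (take the divergence of the first equation and use $\Div u=0$), so that the time derivative $\dot u(t)=-u\cdot\nabla u-\nabla\mathrm{p}$ is an explicit expression built from $u(t)$ and its spatial derivatives together with the solution of a Poisson equation. I would compute the asymptotic expansion of this expression using the mapping properties of multiplication, differentiation, and the inverse Laplacian on the asymptotic spaces $\A^{m,p}_{N;\ell}$ that were established in \cite{McOwenTopalov2} and are summarized earlier in the paper; the pressure expansion \eqref{eq:pressure} and Remark \ref{rem:laplace_operator} record exactly how $\Delta^{-1}$ shifts the order $k$ and the logarithmic level $\ell$. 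Reading off the coefficient of $r^{-k}(\log r)^k$ in the expansion of $\dot u(t)$ then yields an ODE in $C^0(\s^{d-1},\R^d)$ for each $a_k^k(t)$, whose right-hand side depends only on lower-order coefficients $a_{k'}^{j}$ with $k'\le k$ (locality of the nonlinearity in the asymptotic order) and on the corresponding top-log-level coefficients of the pressure.

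Next I would analyze this ODE for $a_k^k(t)$. For $d=2$ the statement $\max\{1,d-2\}=1$ means part (a) only asserts that $a_0\equiv a_0^0$ is conserved, which is immediate: the leading coefficient is unaffected by $u\cdot\nabla u$ (which lowers the order by at least one since $\nabla u$ has no order-zero term — $u$ is bounded, so $\nabla u=O(1/r)$) and by $\nabla\mathrm{p}$ (the gradient lowers the order, and $\mathrm{p}$ itself has no growing term because $\mathrm{tr}\,(\nabla u)^2=O(1/r^2)$). For general $d$, the same mechanism shows that the top-log-level coefficient of the nonlinearity at order $k$ vanishes whenever the inverse Laplacian does not produce a $(\log r)^k/r^k$ contribution from lower-order source terms; the obstruction to this is precisely the resonance $\mu_l=l(l+d-2)$ matching the ``indicial'' value attached to order $k$, which is why the threshold $\max\{1,d-2\}$ and the exceptional value $l=k-d+2$ appear. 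Concretely: $\Delta$ applied to $h(\theta)\,r^{-k+2}$ for $h$ a degree-$l$ spherical harmonic produces $\big((k-2)(k-d)-\mu_l\big)h(\theta)r^{-k}$, which vanishes exactly when $l(l+d-2)=(k-2)(k-d)$, i.e. $l=k-d+2$; this degeneracy forces a $\log$ in the inverse Laplacian and is the only channel through which $a_k^k$ can change.

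I would then pair the ODE for $a_k^k(t)$ against a spherical harmonic $h$ of degree $l$ and integrate over $\s^{d-1}$. Using self-adjointness of $-\Delta_S$ and the explicit indicial computation above, the right-hand side of $\frac{d}{dt}\big(a_k^k(t),h|_{\s^{d-1}}\big)_{\s^{d-1}}$ will be shown to be a sum of terms each of which either carries a factor $\big((k-2)(k-d)-\mu_l\big)$ in the denominator coming from inverting $\Delta$ — hence is well-defined and, after the pairing, produces a factor that is killed — or is a total derivative on $\s^{d-1}$ that integrates to zero against $h$ (using $\Div u=0$ on the sphere-level coefficients). When $l\ne k-d+2$ this yields $\frac{d}{dt}\big(a_k^k(t),h\big)_{\s^{d-1}}=0$, giving part (b); part (a) follows because when $k<\max\{1,d-2\}$ the exceptional value $l=k-d+2$ is negative (or, for $k=0,d=2$, because there simply is no lower-order source to invert), so the full coefficient $a_k^k(t)$ — not merely its pairings — is constant.

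The main obstacle I expect is bookkeeping the inverse Laplacian carefully enough to see the cancellation: one must verify that every contribution to $\dot a_k^k$ that is \emph{not} a sphere-level total derivative genuinely arises as $\Delta^{-1}$ of a source term whose spherical-harmonic content at degree $l$ is multiplied by the indicial factor $(k-2)(k-d)-\mu_l$, so that the pairing with $h$ (degree $l$) annihilates it. This requires knowing the precise form of the asymptotic expansion of $\Delta^{-1}$ on the remainder spaces — Lemma \ref{lem:inverse_laplacian_remainder_space}, invoked in Remark \ref{rem:refined_main_thm} — and a vanishing lemma (Lemma \ref{lem:vanishing_lemma}) identifying which $(\log r)$-levels are actually populated. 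Handling the logarithmic terms correctly — making sure that the top level $j=k$ (not intermediate levels $j<k$) is the one with the clean conservation law, and that feeding $(\log r)^{j}$ source terms through $\Delta^{-1}$ raises the log level in the controlled way described in Remark \ref{rem:laplace_operator} — is the delicate point; once the indicial factor is correctly exhibited, the conclusion is a one-line integration in $t$.
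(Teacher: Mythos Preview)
Your approach is more laborious than necessary and misses the structural shortcut the paper has already built. The paper's proof is essentially two lines once the machinery of Sections~\ref{sec:laplace_operator}--\ref{sec:euler_vector_field} is in place: by Lemma~\ref{lem:no_pressure_term} the solution satisfies $u_t=-u\cdot\nabla u+\Delta^{-1}\circ\nabla\circ Q(u)$, and the paper has already shown (Lemma~\ref{lem:properties_log_spaces}, Lemma~\ref{lem:no_log_terms_in_E}, Corollaries~\ref{coro:invariance_1}--\ref{coro:invariance_2}) that both terms on the right land in the closed subspace $\widetilde\A^{m,p}_{1,N;-1}$ of \eqref{eq:A-tilde}. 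Integrating in $t$ therefore gives $u(t)-u_0\in\widetilde\A^{m,p}_{1,N;-1}=\A^{m,p}_{1,N;-1}\oplus\mathrm{span}\{u_k^j\}$. Now read off the $r^{-k}(\log r)^k$ coefficient: elements of $\A^{m,p}_{1,N;-1}$ carry at most $(\log r)^{k-1}$ at order $r^{-k}$, so contribute nothing to $a_k^k$; the only possible change in $a_k^k$ is a linear combination of the $e_k^j$, which by construction \eqref{def:ujk} are eigenfunctions of $-\Delta_S$ with eigenvalue $\lambda_k=k(k+2-d)=\mu_{k-d+2}$. Orthogonality of distinct eigenspaces then gives (b), and (a) follows because the $u_k^j$ only exist for $k\ge\max\{1,d-2\}$.

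Your proposal recovers the same resonance $l=k-d+2$ via the indicial computation, which is correct, but then tries to argue cancellation term-by-term by pairing ODEs against spherical harmonics and invoking ``sphere-level total derivatives'' from $\Div u=0$. This last step is where your plan is vague and potentially wrong: you have not explained why the nonlinear term $u\cdot\nabla u$ contributes nothing to $\dot a_k^k$ (the paper's answer is simply that $u\cdot\nabla u\in\A^{m-1,p}_{1,N+1;-1}$, hence has log-level at most $k-1$ at order $r^{-k}$---no integration by parts on the sphere is needed), and your claim that the remaining contributions ``carry a factor $((k-2)(k-d)-\mu_l)$ in the denominator \dots\ hence after the pairing produces a factor that is killed'' does not parse as stated. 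The point is not that pairing kills an indicial factor; it is that the change in $a_k^k$ lies entirely in a single eigenspace of $-\Delta_S$. The paper gets this for free from the definition of $\widehat\A^{m,p}_{1,N+1;-1}$ established in Proposition~\ref{prop:laplace_operator}, whereas your route would require redoing that analysis by hand.
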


\begin{Rem}\label{rem:integrals}
The majority of the integrals \eqref{eq:integrals} of the Euler flow are non-trivial 
(see the end of Section \ref{sec:proof_main_theorem}). 
\end{Rem}

\begin{Rem}
Arguing as in the proof of Proposition \ref{prop:integrals}, one can see that if the initial data lies in $\A^{m,p}_{n,N;0}$, 
$0\le n\le N$, then the leading asymptotic term in the asymptotic expansion 
\eqref{eq:asymptotic_log_space} of the solution $u(x,t)$ is independent of $t\in[0,T]$.
Moreover, the coefficients $a_k^j$ with $n\le k\le\min\{2n,d\}$, $0\le j\le k$, in the asymptotic
expansion of the solution are independent of $t\in[0,T]$.
\end{Rem}

\medskip

\noindent{\em Asymptotic diffeomorphisms.}
Let $u\in C^0\big([0,T],\accentset{\,\,\,\circ}\A^{m,p}_{N;0}\big)\cap 
C^1\big([0,T],\accentset{\,\,\,\circ}\A^{m-1,p}_{N;0}\big)$ be the solution of the Euler equation
\eqref{eq:euler} given by Theorem \ref{th:main}. In Section \ref{sec:ode} we prove that
the one-parameter family of diffeomorphisms $x\mapsto\varphi(t,x)$, $\R^d\to\R^d$, $t\in[0,T]$,
associated to the non-autonomous dynamical system $\dot x=u(t,x)$ on $\R^d$ consists of 
volume preserving diffeomorphisms of a special form, that we call {\em asymptotic diffeomorphisms}. 
This class of diffeomorphisms forms a group that we denote by $\accentset{\,\,\,\circ}\A D^{m,p}_{N;0}$. 
This group is a real analytic Banach manifold
and its basic properties are studied in Section \ref{sec:volume_preserving_diffeomorphisms}
(see Theorem \ref{th:volume_preserving_diffeomorphisms} and Proposition \ref{prop:exp_local_diffeomorphism'}.).

\medskip

\noindent{\em Related work.} There is a vast literature on the well-posedness of the incompressible Euler equation 
(see e.g.\ \cite{BardosTiti,Kato1} and the references therein).
Here we only discuss works related to solution of fluid equations in $\R^d$ with various asymptotic or decay conditions
at infinity. In \cite{C} Cantor proved that, for initial data $u_0$ in certain weighted Sobolev spaces 
decaying at infinity, the solution of the Euler equation also has this decay at infinity. 
Similar result was proved by A. Constantin for the Camassa-Holm equation in \cite{Con2}.
In a series of papers \cite{BS1,BS2,BS3}, Bondareva and Shubin proved that the solution map of the KdV equation on 
the line preserves a class of functions allowing asymptotic expansions at infinity of infinite order. 
Bondareva and Shubin use a method developed by Menikoff \cite{Menikoff}.  Menikoff's approach is based on the
integrability of the KdV equation and can not be extended to more general equations. A similar result was proved 
for the modified Korteweg-de Vries equation in \cite{KPST}, by using the
properties of the Miura transform and the result of  Bondareva and Shubin.  Note, however, that neither Cantor nor
Menikoff considers functions having asymptotic expansions at infinity. 
It is worth mentioning that specific spatial decay and even certain asymptotic expansions are known to naturally appear 
for the solutions of the Navier-Stokes equation with {\em rapidly decreasing} initial data 
(see e.g.\ \cite{DobrShaf,BranMe,Bran,KR} and the references therein).
In dimension one, applications of the  asymptotic space $\A^{m,2}_N(\R)$ and an analogous space 
$\AH^{m,2}_N(\R)$ (with a different remainder term) to the Camassa-Holm and the Degasperis-Procesi equations 
were obtained in \cite{McOwenTopalov1}.
Finally, note also that there is a vast literature on asymptotic expansions of solutions of several classes of ordinary 
differential equations (see e.g.\ \cite{Erdelyi}).

\medskip

\noindent{\em Organization of the paper.} The paper is organized as follows.
In Section \ref{sec:the_group} we recall the definition of the asymptotic spaces of functions and the groups of 
asymptotc diffeomorphisms $\A D^{m,p}_{N;0}$ and $\accentset{\,\,\,\circ}\A D^{m,p}_{N;0}$. 
At the end of this section we formulate Proposition \ref{prop:ode} and 
Corollary \ref{coro:ode_volume_preserving}, which allows us to use Arnold's approach to fluid dynamics. 
Section \ref{sec:laplace_operator} is devoted to the properties of the
Laplace operator in asymptotic spaces. In Section \ref{sec:euler_vector_field} we define the Euler vector field
on the tangent bundle of the asymptotic group $\A D^{m,p}_{N;0}$.
The real-analyticity of this vector field is proved in Section \ref{sec:conjugate_laplace_operator} and 
Section \ref{sec:smoothness_of_the _vector_field}. Theorem \ref{th:main} and Proposition \ref{prop:integrals}
are proved in Section \ref{sec:proof_main_theorem}. In Section \ref{sec:ode} we prove
Proposition \ref{prop:ode} and Corollary \ref{coro:ode_volume_preserving} formulated in
Section \ref{sec:the_group}. Section \ref{sec:volume_preserving_diffeomorphisms} is devoted to
the differential geometry of the subgroup of volume preserving asymptotic diffeomorphisms.
In particular, we prove in Section \ref{sec:volume_preserving_diffeomorphisms} that the exponential map corresponding to
the Euler equation, when restricted to a small open neighborhood of zero, is a real analytic diffeomorphism onto its
image. In Appendix \ref{sec:appendix_properties} we collect the basic results on the asymptotic spaces that are used
in the main body of the paper.
In Appendix \ref{sec:examples} we construct two Examples that show the necessity of introducing spatial asymptotic
expansions involving log terms.

\medskip 

\noindent{\em Acknowledgment.} The authors are grateful to Sergei Kuksin and Gerard Misiolek for useful comments
on this paper.

\section{Asymptotic spaces and diffeomorphism groups}\label{sec:the_group}
Here we recall the definition of the asymptotic spaces of functions and groups of asymptotic diffeomorphisms that were  
introduced in \cite{McOwenTopalov2}. For the convenience of the reader we also summarize some of the basic properties of 
these spaces in Appendix \ref{sec:appendix_properties}. 

\medskip

\noindent{\em Asymptotic spaces.}
Let $\chi(\varrho)$ be a $C^\infty$-smooth cut-off function such that $\chi(\varrho)=1$ for $\varrho\ge 2$ and 
$\chi(\varrho)=0$ for $0\le\varrho\le 1$. For $1<p<\infty$ and a nonegative integer $m$, let $ H^{m,p}_{loc}(\R^d)$
denote the space of functions whose weak derivatives up to order $m$ are locally $L^p$-integrable on $\R^d$. 
For any $\delta\in\R$ let us define  the weighted Sobolev space
\begin{equation}\label{eq:remainder_space}
W^{m,p}_\delta(\R^d):=
\big\{f\in H^{m,p}_{loc}(\R^d)\,\big|\,\x^{\delta+|\alpha|}\partial^\alpha f\in L^p,\,\forall\alpha\,\,\text{such that}\,\,
|\alpha|\le m\big\},\, 
\end{equation}
with norm
\[
\|f\|_{W^{m,p}_\delta}:=\sum_{|\alpha|\le m}\|\langle x\rangle^{\delta+|\alpha|}\partial^\alpha f\|_{L^p},
\]
where $\x:=\sqrt{1+|x|}$\,, $|x|$ is the Euclidean norm of $x\in\R^d$, $\alpha=(\alpha_1,...,\alpha_d)$ is a
multi-index with $|\alpha|=\alpha_1+...+\alpha_d$, and
$\partial^\alpha:=\partial_1^{\alpha_1}\partial_2^{\alpha_2}\cdots\partial_d^{\alpha_d}$ where $\partial_k$ 
is the weak partial derivative $\frac{\partial}{\partial x_k}$.
For $m>\frac{d}{p}$ and any integer  $N\ge 0$, we can now define the {\em asymptotic space}
$\A^{m,p}_{N}(\R^d)$ to consist of functions $u\in H^{m,p}_{loc}(\R^d)$ of the form
\begin{equation}\label{eq:asymptotic_space}
u(x)=\chi(r)\Big(a_0(\theta)+\frac{a_1(\theta)}{r}+\cdots+\frac{a_N(\theta)}{r^N}\Big)+f(x),
\end{equation}
where $r=|x|$, $a_k\in H^{m+1+N-k,p}(\s^{d-1})$ and 
the {\em remainder} function $f(x)$ belongs to the weighted Sobolev  space $W^{m,p}_{\gamma_N}(\R^d)$.
Here $\gamma_N:=N+\gamma_0$ where $\gamma_0$ is independent of $m$ and $N$ and is chosen so 
that $0<\gamma_0+d/p<1$. With $\gamma_N$ chosen this way, the remainder in \eqref{eq:asymptotic_space} 
satisfies (see Lemma \ref{lem:remainder_space} in Appendix \ref{sec:appendix_properties}),
\begin{equation}\label{eq:small_o}
f(x)=o\Big(\frac{1}{r^{N}}\Big)\,\,\,\text{as}\ |x|\to\infty\,.
\end{equation}
Hence, the elements of $\A^{m,p}_{N}(\R^d)$ satisfy the asymptotic formula
\eqref{eq:u_0-asymptotic}. When the domain $\R^d$ is understood, we simply write $\A^{m,p}_{N}$ instead of 
$\A^{m,p}_{N}(\R^d)$.

\begin{Rem}
By Lemma \ref{lem:remainder_space} in Appendix \ref{sec:appendix_properties}, \eqref{eq:small_o} will hold if 
we only require $\gamma_N\ge N-\frac{d}{p}$. The stronger condition on $\gamma_N$ is imposed so that 
$\Delta : W^{m+1,p}_{\gamma_N}\to W^{m-1,p}_{\gamma_N+2}$ has a closed image (see \cite{McOwen}).
\end{Rem}

\begin{Rem}
The condition $a_k\in H^{m+1+N-k,p}(\s^{d-1})$, $k=0,...,N$, may seem at first unnatural. In fact, 
this condition is crucial for defining  groups of asymptotic diffeomorphisms below. 
\end{Rem}

\noindent Assume that  $0\le n\le N$ and denote by $\A^{m,p}_{n,N}$ the closed subspace in 
$\A^{m,p}_{N}$ defined by the condition that the first $n$ coefficients $a_0,...,a_{n-1}$ in the asymptotic 
expansion of $u\in \A^{m,p}_{N}$ vanish. If $n\ge N+1$ we set 
$\A^{m,p}_{n,N}:=W^{m,p}_{\gamma_N}$.
The asymptotic space $\A^{m,p}_{n,N}$ is a Banach space with norm,
\begin{equation}\label{eq:the_norm}
\|u\|_{\A^{m,p}_{n,N}}:=\sum_{k=n}^{N}\|a_k\|_{ H^{m+1+N-k,p}(\s^{d-1})}+\|f\|_{W^{m,p}_{\gamma_N}}\,.
\end{equation}

Now let us discuss how to include {\em log-terms} in the asymptotic spaces. For $0\leq n \leq N$ and an integer 
$\ell\ge -n$, the space $\A^{m,p}_{n,N;\ell}(\R^d)$ is defined in the same way as 
$\A^{m,p}_{n,N}(\R^d)$ except that formula \eqref{eq:asymptotic_space} is now replaced by
\begin{equation}\label{eq:asymptotic_log_space}
u(x)=\chi(r)\Big(\frac{a_n^0+\cdots+a_n^{n+\ell}(\log r)^{n+\ell}}{r^n}+\cdots+
\frac{a_N^0+\cdots+a_N^{N+\ell}(\log r)^{N+\ell}}{r^N}\Big)+f(x),
\end{equation}
where $a_k^j\in H^{m+1+N-k}(\s^{d-1})$ for $0\le j\le k+\ell$ and $0\le n\le k\le N$, and 
$f\in W^{m,p}_{\gamma_N}(\R^d)$. In particular, the remainder $f$ satisfies \eqref{eq:small_o}.
The norm in $\A^{m,p}_{n,N;\ell}(\R^d)$ is defined  in a similar way as in \eqref{eq:the_norm},
\[
\|u\|_{\A^{m,p}_{n,N;\ell}}:=
\sum_{n\le k\le N, 0\le j\le k+\ell}\|a_k^j\|_{ H^{m+1+N-k,p}(\s^{d-1})}+\|f\|_{W^{m,p}_{\gamma_N}}\,.
\]
As before, we generally write $\A^{m,p}_{n,N;\ell}$ when the domain $\R^d$ is understood.

Now consider vector fields $\A^{m,p}_{n,N;\ell}(\R^d,\R^d)$ whose components are functions in the asymptotic space 
$\A^{m,p}_{n,N;\ell}$;  when the domain $\R^d$ and the fact that they are vector fields are understood, we again 
simply write $\A^{m,p}_{n,N;\ell}$.
Theorem \ref{th:main} implies that the asymptotic space of vector fields $\A^{m,p}_{N;0}\equiv\A^{m,p}_{0,N;0}$ is 
preserved by the Euler flow. In order to prove this fact and the stated well-posedness, we will consider the following
{\em groups of asymptotic diffeomorphisms} 
(or simply, {\em asymptotic groups}).

\medskip\medskip

\noindent{\em Groups of asymptotic diffeomorphisms.}
Denote by $\mathop{\rm Diff}^1_+(\R^d)$ the group of orientation preserving $C^1$-diffeomorphisms of $\R^d$.
For any integers $0\le n\le N$, $-n\le \ell\le 0$, and $m>1+\frac{d}{p}$, define
\begin{equation}\nonumber
\A D^{m,p}_{n,N;\ell}:=\A D^{m,p}_{n,N;\ell}(\R^d):=\big\{\varphi\in\text{\rm Diff}^1_+(\R^d)\,\big|\,
\varphi(x)=x+w(x), \, w\in\A^{m,p}_{n,N;\ell}\big\}\,.
\end{equation}
For simplicity,  we write $\A D^{m,p}_{N;\ell}$ for $\A D^{m,p}_{0,N;\ell}$.
The differential structure of $\A D^{m,p}_{n,N;\ell}$ is inherited in a natural way from the differential structure of
the Banach space $\A^{m,p}_{n,N;\ell}$. In fact, $\A D^{m,p}_{n,N;\ell}$ is an {\em open subset} in 
$\A^{m,p}_{n,N;\ell}$. To see this, for any $h\in C^1(\R^d,\R^d)$ denote
\[
[{\rm d}_x h]:=\left(\frac{\partial h^k}{\partial x_j}(x)\right)_{1\leq j,k\leq d}
\]
the Jacobian matrix of the map $h : \R^d\to\R^d$.
Then any $\varphi={\rm id}+w\in\A D^{m,p}_{n,N;\ell}$ is identified with $w\in\A^{m,p}_{n,N;\ell}$ such that 
for any $x\in\R^d$, $\det\big(\Id+[{\rm d}_x w]\big)>0$,\footnote{Throughout this paper we use id to represent 
the identity map and $\Id$
to represent the identity matrix.} which is an open condition in $\A^{m,p}_{n,N;\ell}$.
In this way, $\A D^{m,p}_{n,N;\ell}$ becomes a Banach manifold modeled on $\A^{m,p}_{n,N;\ell}$.
Moreover, this allows us to identify in a natural way the tangent bundle $T\big(\A D^{m,p}_{N;\ell}\big)$ of 
$\A D^{m,p}_{N;\ell}$ with the direct product $\A D^{m,p}_{N;\ell}\times\A^{m,p}_{N;\ell}$,
\[
T\big(\A D^{m,p}_{N;\ell}\big)\equiv\A D^{m,p}_{N;\ell}\times\A^{m,p}_{N;\ell}\,.
\]
In a similar way one also defines the classes of asymptotic diffeomorphisms without log-terms 
$\A D^{m,p}_{n,N}$ (see \cite[Section 2]{McOwenTopalov2}). 

\begin{Rem}
The restriction on $\ell$, namely $-n\le \ell\le 0$, that appears in the definition of the asymptotic groups 
ensures that $\A^{m,p}_{n,N;\ell}$ is a Banach algebra with respect to point-wise multiplication
(see Lemma \ref{lem:properties_log_spaces} in Appendix \ref{sec:appendix_properties}).
\end{Rem} 

\medskip

For the convenience of the reader, we collect below the main results on the groups of 
asymptotic diffeomorphisms that are used in the paper. 
The following Theorem is proved in \cite{McOwenTopalov2}.

\begin{Th}\label{th:the_group}
Assume that  $m>1+\frac{d}{p}$, $0\le n\le N$,  and $-n\le\ell\le 0$. Then we have:
\begin{itemize}
\item[$(a)$] The composition map
$(u,\varphi)\mapsto u\circ\varphi$, $\A^{m,p}_{n,N;\ell}\times \A D^{m,p}_{N;\ell}\to\A^{m,p}_{n,N;\ell}$,
and the inverse map,
$\varphi\mapsto\varphi^{-1}$, $\A D^{m+1,p}_{n,N;\ell}\to\A D^{m+1,p}_{n,N;\ell}$,
are continuous;

\item[$(b)$] The maps $(u,\varphi)\mapsto u\circ\varphi$, 
$\A^{m+1,p}_{n,N;\ell}\times \A D^{m,p}_{N;\ell}\to\A^{m,p}_{n,N;\ell}$, and
$\varphi\mapsto\varphi^{-1}$, $\A D^{m+1,p}_{n,N;\ell}\to\A D^{m,p}_{n,N;\ell}$
are $C^1$-smooth maps.
\end{itemize}
\end{Th}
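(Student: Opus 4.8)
\medskip
\noindent\emph{Sketch of the argument.}\ The statement is of the classical Ebin--Marsden type and the plan is to prove it along the same lines. Write $u\in\A^{m,p}_{n,N;\ell}$ as $u=\chi(r)P_u+f_u$ with asymptotic part $P_u=\sum_{k=n}^{N}r^{-k}\sum_{j=0}^{k+\ell}a_k^j(\theta)(\log r)^j$ and remainder $f_u\in W^{m,p}_{\gamma_N}$, and write $\varphi=\id+w$ with $w\in\A^{m,p}_{N;\ell}$. The first step is to reduce both assertions to two ingredients. The first is a composition lemma on weighted Sobolev spaces: $(f,\varphi)\mapsto f\circ\varphi$ is continuous from $W^{m,p}_{\gamma_N}\times\A D^{m,p}_{N;\ell}$ to $W^{m,p}_{\gamma_N}$ and is $C^1$ from $W^{m+1,p}_{\gamma_N}\times\A D^{m,p}_{N;\ell}$ to $W^{m,p}_{\gamma_N}$; this is the weighted analogue of the $\omega$-lemma and rests on the fact that $\langle x+w(x)\rangle$ and $\langle x\rangle$ are comparable because $w$ (having a genuine limit $a_0(\theta)$ at infinity) is bounded together with its derivatives in the appropriate sense. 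The second ingredient is the purely algebraic fact that inserting $\varphi$ into the finite asymptotic sum $P_u$ again produces an expression of the form prescribed in \eqref{eq:asymptotic_log_space}.

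For the second ingredient one expands, for large $|x|$,
\[
r'=|x+w(x)|=r\Big(1+\tfrac{2\theta\cdot w}{r}+\tfrac{|w|^2}{r^2}\Big)^{1/2},\qquad
\theta'=\frac{x+w(x)}{r'},\qquad
\log r'=\log r+\tfrac12\log\Big(1+\tfrac{2\theta\cdot w}{r}+\tfrac{|w|^2}{r^2}\Big),
\]
and substitutes the asymptotic expansion of $w=w(r\theta)$ in powers of $1/r$ with $(\log r)$-coefficients. Since $w$ is bounded and $\A^{m,p}_{n,N;\ell}$ is a Banach algebra under pointwise multiplication for $-n\le\ell\le0$ (Lemma \ref{lem:properties_log_spaces}), each of $1/r'$, $\log r'$ and the components of $\theta'$ acquires an expansion of the same type; moreover $\chi(r')-\chi(r)$ is compactly supported and is absorbed into the remainder. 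Feeding these into $P_u$ and regrouping powers of $1/r$ and $\log r$ yields $\chi(r)P_{u\circ\varphi}$ plus a term in $W^{m,p}_{\gamma_N}$, with the new angular coefficients lying in exactly the Sobolev classes demanded by \eqref{eq:asymptotic_log_space}; the bookkeeping is arranged so that the indices $m+1+N-k$ and the weight $\gamma_N$ close up. In particular, the leading coefficient $a_0$ is unaffected, since $r'/r\to1$ and $\theta'\to\theta$ as $|x|\to\infty$. Combining the two ingredients proves the continuity of composition in $(a)$.

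For the $C^1$-claim in $(b)$ one exhibits the differential of $(u,\varphi)\mapsto u\circ\varphi$ at $(u_0,\varphi_0)$ as
$(h,\delta\varphi)\mapsto h\circ\varphi_0+\big((\nabla u_0)\circ\varphi_0\big)\,\delta\varphi$.
This is a bounded linear map $\A^{m+1,p}_{n,N;\ell}\times\A^{m,p}_{N;\ell}\to\A^{m,p}_{n,N;\ell}$ precisely because $u_0\in\A^{m+1,p}_{n,N;\ell}$ forces $\nabla u_0\in\A^{m,p}_{n,N;\ell}$ — this is the origin of the loss of one derivative — and the algebra property handles the product. That it is the Fréchet derivative follows from Taylor's formula with integral remainder, the remainder being estimated by the continuity of composition from $(a)$; its continuous dependence on $(u_0,\varphi_0)$ follows again from $(a)$ and the algebra property.

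Finally, for the inverse map put $v:=\varphi^{-1}-\id$; from $\varphi\circ\varphi^{-1}=\id$ one gets the functional equation $v=-w\circ(\id+v)$. Differentiating it gives $\nabla v=-\big[\Id+(\nabla w)\circ(\id+v)\big]^{-1}(\nabla w)\circ(\id+v)$, where the matrix field $\Id+(\nabla w)\circ(\id+v)$ is pointwise invertible with inverse again in the matrix algebra; a bootstrap in the number of derivatives then upgrades $v$ from $\A^{m,p}_{n,N;\ell}$ to $\A^{m+1,p}_{n,N;\ell}$ whenever $w\in\A^{m+1,p}_{N;\ell}$, giving $\varphi^{-1}\in\A D^{m+1,p}_{n,N;\ell}$. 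Continuity of $\varphi\mapsto\varphi^{-1}$ at regularity $m+1$ is obtained by localization: writing $\varphi=\varphi_0\circ\eta$ with $\eta=\varphi_0^{-1}\circ\varphi$ near $\id$, one inverts $\eta$ near the identity via the functional equation and recovers $\varphi^{-1}=\eta^{-1}\circ\varphi_0^{-1}$, both steps being continuous by $(a)$. For the $C^1$-statement $\A D^{m+1,p}_{n,N;\ell}\to\A D^{m,p}_{n,N;\ell}$ one differentiates $\varphi\circ\varphi^{-1}=\id$ to find the candidate differential $\delta\varphi\mapsto-\big[(\nabla\varphi_0)\circ\varphi_0^{-1}\big]^{-1}\big(\delta\varphi\circ\varphi_0^{-1}\big)$, which maps $\A^{m+1,p}_{n,N;\ell}$ into $\A^{m,p}_{n,N;\ell}$ (the single lost derivative entering through $\varphi_0^{-1}\in\A D^{m,p}_{n,N;\ell}$), and checks that it is the Fréchet derivative and depends continuously on $\varphi_0$ using $(a)$ and the composition $C^1$-statement. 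I expect the main obstacle to be the algebraic reexpansion step above — verifying that the asymptotic structure, together with its specific angular Sobolev indices and remainder weight, is reproduced exactly under composition, and that the single derivative is lost in exactly the places needed so that $(a)$ and $(b)$ hold at the stated regularities.
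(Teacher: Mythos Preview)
The paper does not prove this theorem; it states explicitly that the result is established in the companion paper \cite{McOwenTopalov2}. Your sketch follows the expected Ebin--Marsden template and is presumably close in spirit to what \cite{McOwenTopalov2} does: split $u$ into remainder and asymptotic part, treat the remainder via a weighted $\omega$-lemma, and reexpand the finite asymptotic sum after substitution.

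Two points are worth flagging, since the present paper itself comments on them. First, in the Remark following Corollary~\ref{coro:topological_group} it is stated that the existence of inverses in \cite{McOwenTopalov2} is obtained via a \emph{fixed point argument}, and that this is the origin of the threshold $m>2+d/p$ (equivalently, the regularity $m+1$ in the inverse map of part~$(a)$). Your sketch writes down the functional equation $v=-w\circ(\id+v)$ and then bootstraps, but the bootstrap presupposes that $v=\varphi^{-1}-\id$ already lies in \emph{some} asymptotic space $\A^{m_0,p}_{n,N;\ell}$; establishing that base case is exactly where a contraction-mapping argument is needed and where the extra derivative is spent. Your localization step ``invert $\eta$ near the identity via the functional equation'' is essentially this, but it should be made explicit that one is running a contraction in $\A^{m,p}_{n,N;\ell}$, not merely quoting the classical $C^1$ inverse.

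Second, the reexpansion of $P_u\circ\varphi$ is not ``purely algebraic'' as you describe it: the angular coefficients $a_k^j$ are general elements of $H^{m+1+N-k,p}(\s^{d-1})$, not polynomials in $\theta$, so the substitution $\theta\mapsto\theta'$ cannot be handled by a Taylor expansion in $\theta$ alone; one needs a genuine composition result on the sphere, with control of the Sobolev indices $m+1+N-k$. Compare Lemma~\ref{lem:composition*} in the present paper, which treats only the polynomial case and already requires care. This is the step you yourself single out as the main obstacle, and it is indeed where most of the work in \cite{McOwenTopalov2} presumably lies.
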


\begin{Coro}\label{coro:topological_group}
For $m>2+\frac{d}{p}$ the Banach manifold  $\A D^{m,p}_{n,N;\ell}$ is a topological group with respect to 
the composition of diffeomorphisms. 
\end{Coro}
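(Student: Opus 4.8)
The plan is to verify the three defining properties of a topological group for $\A D^{m,p}_{n,N;\ell}$: stability under composition of diffeomorphisms, stability under inversion, and continuity of these two operations in the manifold topology. Associativity is automatic for composition of maps, and $\id=\id+0$ plainly lies in $\A D^{m,p}_{n,N;\ell}$, so these are the only items to check. The whole argument is an application of Theorem \ref{th:the_group}; the one point that needs care is the choice of the regularity index, and it is precisely here that the hypothesis $m>2+\tfrac{d}{p}$ is used rather than the weaker $m>1+\tfrac{d}{p}$.

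For composition, write $\varphi=\id+w$ and $\psi=\id+v$ with $w,v\in\A^{m,p}_{n,N;\ell}$, so that
\[
\varphi\circ\psi=\id+\bigl(v+w\circ(\id+v)\bigr).
\]
Because the vanishing of the first $n$ asymptotic coefficients is a closed linear condition, $\A^{m,p}_{n,N;\ell}$ embeds continuously into $\A^{m,p}_{N;\ell}\equiv\A^{m,p}_{0,N;\ell}$, whence $\A D^{m,p}_{n,N;\ell}\subseteq\A D^{m,p}_{N;\ell}$ and in particular $\psi\in\A D^{m,p}_{N;\ell}$. Since $m>1+\tfrac{d}{p}$, part $(a)$ of Theorem \ref{th:the_group} gives continuity of the composition map $(u,\phi)\mapsto u\circ\phi$ from $\A^{m,p}_{n,N;\ell}\times\A D^{m,p}_{N;\ell}$ to $\A^{m,p}_{n,N;\ell}$. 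Taking $u=w$ and $\phi=\psi$, and using the identification of $\A D^{m,p}_{n,N;\ell}$ with an open subset of $\A^{m,p}_{n,N;\ell}$ via $\psi\mapsto\psi-\id$, the assignment $(w,v)\mapsto v+w\circ(\id+v)$ is continuous with values in $\A^{m,p}_{n,N;\ell}$. As the composition of two orientation-preserving $C^1$-diffeomorphisms is again one, the displayed identity shows simultaneously that $\varphi\circ\psi\in\A D^{m,p}_{n,N;\ell}$ and that composition is continuous on $\A D^{m,p}_{n,N;\ell}\times\A D^{m,p}_{n,N;\ell}$.

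For inversion, I would apply part $(a)$ of Theorem \ref{th:the_group} with its parameter ``$m$'' replaced by $m-1$, which is legitimate exactly because $m-1>1+\tfrac{d}{p}$; this yields that $\varphi\mapsto\varphi^{-1}$ maps $\A D^{(m-1)+1,p}_{n,N;\ell}=\A D^{m,p}_{n,N;\ell}$ into itself and is continuous there. Together with the previous paragraph, this establishes the Corollary. The only genuine subtlety is this index shift: continuity of the inverse map is guaranteed by Theorem \ref{th:the_group} only one order of smoothness above its standing hypothesis, and it is this unavoidable loss of one derivative that forces $m>2+\tfrac{d}{p}$ in the statement. Beyond keeping the bookkeeping straight there is no analytic obstacle, since all the work is already contained in Theorem \ref{th:the_group}.
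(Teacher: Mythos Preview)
Your proposal is correct and is precisely the intended derivation: the paper states the Corollary immediately after Theorem \ref{th:the_group} without further argument, and your index shift $m\mapsto m-1$ in part $(a)$ to obtain continuity of inversion on $\A D^{m,p}_{n,N;\ell}$ is exactly the point that forces the hypothesis $m>2+\tfrac{d}{p}$.
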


\begin{Rem}
Note that similar results also hold for $\A D^{m,p}_{n,N}$ (see \cite{McOwenTopalov2}).
\end{Rem}

\begin{Rem}
Generally, regularity $m>1+d/p$ is sufficient for a collection of diffeomorphisms of Sobolev class $H^m(\R^d,\R^d)$
(without weights) to form a topological group under composition  (see \cite{IKT}, or \cite{EM} for the case 
of a compact manifold). The stronger condition $m>2+d/p$ in Corollary \ref{coro:topological_group} stems from 
the fixed point argument used in \cite{McOwenTopalov2} to prove the existence of inverses when dealing with 
asymptotics. It should be possible to obtain Corollary \ref{coro:topological_group} for $m>1+d/p$, but
\cite{McOwenTopalov2} and the present paper are concerned more with the asymptotic structure of solutions
than minimal regularity requirements. In fact, for technical reasons (see Remarks \ref{rem:m>3+d/p} and \ref{rem2:m>3+d/p}), in the rest of this paper we assume that $\bf m>3+\frac{d}{p}$.
\end{Rem}

\medskip

\noindent{\em Volume preserving diffeomorphisms {\&} Asymptotic ODEs.}
In Section \ref{sec:ode} we prove the following

\begin{Prop}\label{prop:ode}
Assume that $u\in C^0\big([0,T],\A^{m,p}_{n,N;\ell}\big)$ for some $T>0$, $0\le n\le N$, and $-n\le\ell\le 0$. 
Then there exists a unique solution $\varphi\in C^1\big([0,T],\A D^{m,p}_{n,N;\ell}\big)$ of the equation,
\[
\dt\varphi=u\circ\varphi,\quad\varphi|_{t=0}=\id\,.
\]
The same result also holds for the asymptotic groups $\A D^{m,p}_{n,N}$.
\end{Prop}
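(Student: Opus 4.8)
The plan is to run a Picard iteration / fixed-point argument for the integral form of the ODE $\dot\varphi=u\circ\varphi$, $\varphi(0)=\id$, set in the Banach manifold $\A D^{m,p}_{n,N;\ell}$. First I would write $\varphi=\id+w$ with $w\in\A^{m,p}_{n,N;\ell}$, so the equation becomes $\dot w(t)=u(t)\circ(\id+w(t))$, $w(0)=0$, and consider the map
\[
\Phi(w)(t):=\int_0^t u(s)\circ(\id+w(s))\,ds
\]
on the space $X_T:=C^0\big([0,T],\A^{m,p}_{n,N;\ell}\big)$, restricted to a closed ball around $0$ of radius chosen small enough that $\id+w(t)$ still lies in the open set $\A D^{m,p}_{n,N;\ell}\subset\A^{m,p}_{n,N;\ell}$ (so the composition makes sense). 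The key analytic input is that, by Theorem \ref{th:the_group}(a), the composition map $(v,\psi)\mapsto v\circ\psi$ from $\A^{m,p}_{n,N;\ell}\times\A D^{m,p}_{N;\ell}$ to $\A^{m,p}_{n,N;\ell}$ is continuous, and hence locally bounded; combined with the uniform bound $\sup_{s\in[0,T]}\|u(s)\|_{\A^{m,p}_{n,N;\ell}}<\infty$ coming from $u\in C^0$, this makes $\Phi$ map $X_T$ into itself and produces the factor $T$ in front, which gives self-mapping of the ball for $T$ small.

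The main obstacle is establishing the contraction estimate, i.e.\ a local Lipschitz bound on $\psi\mapsto u(s)\circ\psi$ with respect to the $\A^{m,p}_{n,N;\ell}$-norm, since Theorem \ref{th:the_group}(a) only asserts \emph{continuity} of the composition and its (a) statement does not literally give a Lipschitz constant. Here I would use part (b) of Theorem \ref{th:the_group}: for $u(s)\in\A^{m+1,p}_{n,N;\ell}$ the composition $(v,\psi)\mapsto v\circ\psi$, $\A^{m+1,p}_{n,N;\ell}\times\A D^{m,p}_{N;\ell}\to\A^{m,p}_{n,N;\ell}$, is $C^1$, so its differential in $\psi$ is locally bounded, yielding a genuine Lipschitz estimate
\[
\|u(s)\circ\psi_1-u(s)\circ\psi_2\|_{\A^{m,p}_{n,N;\ell}}\le C\big(\|u(s)\|_{\A^{m+1,p}_{n,N;\ell}}\big)\,\|\psi_1-\psi_2\|_{\A^{m,p}_{n,N;\ell}}
\]
on a fixed ball. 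Since we only assumed $u\in C^0\big([0,T],\A^{m,p}_{n,N;\ell}\big)$ and not one derivative more, I would circumvent this either by (i) a mollification/approximation argument: approximate $u$ by $u^\varepsilon\in C^0\big([0,T],\A^{m+1,p}_{n,N;\ell}\big)$, solve the ODE for $u^\varepsilon$ uniquely by the above contraction argument, and pass to the limit using the $C^0$ composition continuity of Theorem \ref{th:the_group}(a) together with an a priori bound on $\dot\varphi^\varepsilon=u^\varepsilon\circ\varphi^\varepsilon$ in $\A^{m,p}_{n,N;\ell}$ uniform in $\varepsilon$ (this also gives $\varphi\in C^1$); or (ii) observe that the Lipschitz bound at level $m$ in $\psi$ only needs control of one \emph{spatial} derivative of $u(s)$, which is available from $u(s)\in\A^{m,p}$ with $m>3+d/p$ since then $\A^{m,p}\hookrightarrow\A^{m-1,p}$ and the relevant mean-value estimate $v\circ\psi_1-v\circ\psi_2=\big(\int_0^1 [\mathrm{d}v]((1-\tau)\psi_2+\tau\psi_1)\,d\tau\big)(\psi_1-\psi_2)$ closes using that $\A^{m-1,p}_{n,N;\ell}$ is a Banach algebra (Lemma \ref{lem:properties_log_spaces}) acting on itself. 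I would present route (ii) as the clean argument and keep route (i) as a fallback.

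Once the contraction is in hand, the Banach fixed point theorem on the ball in $X_T$ produces a unique $w\in C^0\big([0,T],\A^{m,p}_{n,N;\ell}\big)$ with $\Phi(w)=w$; differentiating the integral identity gives $\dot w=u\circ(\id+w)\in C^0\big([0,T],\A^{m,p}_{n,N;\ell}\big)$ (again by the composition continuity), so in fact $w\in C^1\big([0,T],\A^{m,p}_{n,N;\ell}\big)$, i.e.\ $\varphi=\id+w\in C^1\big([0,T],\A D^{m,p}_{n,N;\ell}\big)$ after checking $\varphi(t)\in\A D^{m,p}_{n,N;\ell}$ — which holds since $\det(\Id+[\mathrm{d}_x w(t)])$ stays positive: it equals $1$ at $t=0$ and varies continuously, and shrinking $T$ (or the ball) keeps $[\mathrm{d}_x w(t)]$ small in $\A^{m-1,p}\hookrightarrow C^0$. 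Global existence on all of $[0,T]$ (not just a short subinterval) follows by the standard continuation argument: the only way a solution can fail to extend is if $\|w(t)\|_{\A^{m,p}_{n,N;\ell}}$ blows up or $\varphi(t)$ exits $\A D^{m,p}_{n,N;\ell}$, but Grönwall applied to $\|w(t)\|\le\int_0^t\|u(s)\circ\varphi(s)\|\,ds$ together with the local boundedness of composition gives an a priori bound on $[0,T]$, and a similar argument controls $\det(\Id+[\mathrm{d}_x w(t)])$ away from $0$; hence the local solution extends to the whole interval. Finally, uniqueness on $[0,T]$ is immediate from the Lipschitz estimate and Grönwall. The statement for $\A D^{m,p}_{n,N}$ (no log terms) is identical, using the corresponding properties of those spaces noted after Theorem \ref{th:the_group}.
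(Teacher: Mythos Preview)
Your route (ii) has a derivative loss that you gloss over. The mean-value identity $v\circ\psi_1-v\circ\psi_2=\big(\int_0^1[\mathrm{d}v]((1-\tau)\psi_2+\tau\psi_1)\,d\tau\big)(\psi_1-\psi_2)$ involves $[\mathrm{d}v]\in\A^{m-1,p}_{n+1,N+1;\ell-1}$, and after composition and multiplication the product lands only in $\A^{m-1,p}$, not in $\A^{m,p}$: you obtain Lipschitz continuity one regularity level down, not at level $m$. This is exactly the content of Theorem~\ref{th:the_group}(b) (applied with $m$ replaced by $m-1$), and it is precisely why the paper first solves the ODE in $\A D^{m-1,p}_{n,N;\ell}$. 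What your argument is missing is the bootstrap back to level $m$: the paper differentiates the flow equation to obtain the \emph{linear} matrix ODE $[{\rm d}\varphi]^{\cdt}=\big([{\rm d}u]\circ\varphi\big)\cdot[{\rm d}\varphi]$, solves it in the Banach algebra $\A^{m-1,p}_{n+1,N+1;\ell-1}$ (giving $[{\rm d}w]\in C^1([0,T],\A^{m-1,p}_{n+1,N+1;\ell-1})$), takes the divergence to obtain $\Delta w\in C^1([0,T],\A^{m-2,p}_{n+2,N+2;\ell-2})$, and then invokes the mapping properties of $\Delta$ on asymptotic spaces (Remark~\ref{rem:laplace_operator} and \cite[Proposition~B.4]{McOwenTopalov2}) to recover $w\in C^1([0,T],\A^{m,p}_{n,N;\ell})$. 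Your route (i) does not escape the difficulty either: passing to the limit $u^\varepsilon\to u$ requires exactly the $\A^{m,p}$-level stability estimate you do not have, and weak-compactness arguments are not readily available in these non-reflexive asymptotic spaces.

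A second gap is your global-in-time argument. The bound $\|w(t)\|\le\int_0^t\|u(s)\circ\varphi(s)\|\,ds$ together with ``local boundedness'' of the composition is not a Gr\"onwall inequality: continuity of $(v,\psi)\mapsto v\circ\psi$ only yields a bound of the form $C(M,R)$ when $\|v\|\le M$ and $\|w\|\le R$, with no linear control in $R$, so non-blowup does not follow. The paper instead exploits right-translation invariance of the equation together with the smoothness of $R_{\varphi_0}:\A D^{m-1,p}_{n,N;\ell}\to\A D^{m-1,p}_{n,N;\ell}$ for each fixed $\varphi_0$: this makes the local existence time independent of the initial point, and compactness of $[0,T]$ then yields the solution on the whole interval.
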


In addition to the group $\A D^{m,p}_{n,N;\ell}$, we also study the topological subgroup of 
{\em volume preserving asymptotic diffeomorphisms},
\[
\accentset{\,\,\,\circ}{\A} D^{m,p}_{n,N;\ell}:=\big\{\varphi\in\A D^{m,p}_{n,N;\ell}\,\big|\,
\forall x\in\R^d,\,\det[{\rm d}_x\varphi]=1\big\}\,.
\]
Denote by $\accentset{\,\,\,\circ}{\A}^{m,p}_{n,N;\ell}$ the space of {\em divergence free asymptotic vector fields} in 
$\A^{m,p}_{n,N;\ell}$,
\[
\accentset{\,\,\,\circ}{\A}^{m,p}_{n,N;\ell}:=\big\{u\in\A^{m,p}_{n,N;\ell}\,\big|\,\Div u=0\big\}\,.
\]
Clearly, $\accentset{\,\,\,\circ}{\A}^{m,p}_{n,N;\ell}$ is a closed subspace in ${\A}^{m,p}_{n,N;\ell}$.

As a consequence of Proposition \ref{prop:ode} we also obtain in Section \ref{sec:ode} the following

\begin{Coro}\label{coro:ode_volume_preserving}
Assume that $u\in C^0\big([0,T],\accentset{\,\,\,\circ}{\A}^{m,p}_{n,N;\ell}\big)$ for some $T>0$,
$0\le n\le N$, and $-n\le\ell\le 0$. Then there exists a unique solution 
$\varphi\in C^1\big([0,T],\accentset{\,\,\,\circ}{\A} D^{m,p}_{n,N;\ell}\big)$
of the equation,
\[
\dt\varphi=u\circ\varphi,\quad\varphi|_{t=0}=\id\,.
\]
\end{Coro}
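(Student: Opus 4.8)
The plan is to derive the Corollary from Proposition \ref{prop:ode} together with the classical Liouville formula for the Jacobian of a flow. Since $\accentset{\,\,\,\circ}{\A}^{m,p}_{n,N;\ell}\subseteq\A^{m,p}_{n,N;\ell}$, Proposition \ref{prop:ode} already yields a unique $\varphi\in C^1\big([0,T],\A D^{m,p}_{n,N;\ell}\big)$ with $\dt\varphi=u\circ\varphi$ and $\varphi|_{t=0}=\id$; uniqueness inside the smaller class $\accentset{\,\,\,\circ}{\A} D^{m,p}_{n,N;\ell}$ is then automatic, as any such solution is a fortiori a solution in $\A D^{m,p}_{n,N;\ell}$. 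So the entire task reduces to showing that this $\varphi$ is volume preserving, i.e. $\det\big[{\rm d}_x\varphi(t,x)\big]=1$ for all $t\in[0,T]$ and $x\in\R^d$.

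To this end I would fix $x\in\R^d$, set $J(t,x):=\det\big[{\rm d}_x\varphi(t,x)\big]$, and use that, since $m>3+\frac{d}{p}$, the Sobolev embeddings for the asymptotic spaces (recalled in Appendix \ref{sec:appendix_properties}) make $(t,x)\mapsto\varphi(t,x)$ of class $C^1$ in $t$ and $C^2$ in $x$ with continuous mixed second derivatives. Differentiating the identity $\dt\varphi=u\circ\varphi$ in the space variables (interchanging $\partial_t$ and $\partial_{x_j}$) gives the linear matrix equation
\[
\partial_t\big[{\rm d}_x\varphi\big]=\big([{\rm d}u]\circ\varphi\big)\,\big[{\rm d}_x\varphi\big],
\]
where $[{\rm d}u]$ denotes the spatial Jacobian of $u$. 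As $\varphi(t)$ is a diffeomorphism, $[{\rm d}_x\varphi]$ is invertible, so Jacobi's formula yields
\[
\partial_t J=J\cdot\tr\Big(\big[{\rm d}_x\varphi\big]^{-1}\big([{\rm d}u]\circ\varphi\big)\big[{\rm d}_x\varphi\big]\Big)
=J\cdot\big(\tr[{\rm d}u]\big)\circ\varphi=J\cdot\big((\Div u)\circ\varphi\big)=0,
\]
the last equality holding because $u(t)\in\accentset{\,\,\,\circ}{\A}^{m,p}_{n,N;\ell}$ is divergence free. Since $J(0,x)=\det\Id=1$, this forces $J(t,x)\equiv1$, hence $\varphi(t)\in\accentset{\,\,\,\circ}{\A} D^{m,p}_{n,N;\ell}$ for every $t$. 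Finally, $\dt\varphi(t)=u(t)\circ\varphi(t)$ is the right translate by $\varphi(t)$ of the divergence-free field $u(t)$, hence tangent to $\accentset{\,\,\,\circ}{\A} D^{m,p}_{n,N;\ell}$ at $\varphi(t)$, so the curve $t\mapsto\varphi(t)$ is $C^1$ into $\accentset{\,\,\,\circ}{\A} D^{m,p}_{n,N;\ell}$, which is the assertion.

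The argument is essentially routine and I do not anticipate a serious obstacle. The one point demanding care is the regularity of the flow map needed to differentiate $\dt\varphi=u\circ\varphi$ in $x$ and to commute the $t$- and $x$-derivatives; this is exactly where the standing hypothesis $m>3+\frac{d}{p}$ enters, through the Sobolev embeddings for the asymptotic spaces (and, if convenient, the composition statements of Theorem \ref{th:the_group}). Alternatively, one could bypass pointwise manipulations by observing that $J(\cdot,\cdot)-1$ solves a homogeneous linear scalar transport equation with zero initial datum and appealing to uniqueness, but the pointwise Liouville computation above is the most transparent route.
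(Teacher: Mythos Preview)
Your argument is correct and matches the paper's proof: both invoke Proposition~\ref{prop:ode} for existence and uniqueness in $\A D^{m,p}_{n,N;\ell}$ and then apply the Liouville/Wronskian identity to the linear system $\partial_t[{\rm d}\varphi]=([{\rm d}u]\circ\varphi)\,[{\rm d}\varphi]$ to conclude $\det[{\rm d}\varphi]\equiv 1$.

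One point to drop, however, is your final sentence about $\dt\varphi$ being ``tangent to $\accentset{\,\,\,\circ}{\A} D^{m,p}_{n,N;\ell}$''. At this stage of the paper $\accentset{\,\,\,\circ}{\A} D^{m,p}_{n,N;\ell}$ is only a closed subset of $\A D^{m,p}_{n,N;\ell}$; its submanifold structure and the identification of its tangent spaces (Theorem~\ref{th:volume_preserving_diffeomorphisms}) are established \emph{later}, and in fact use the present Corollary. So invoking tangency here is circular. It is also unnecessary: once you have $\varphi\in C^1\big([0,T],\A D^{m,p}_{n,N;\ell}\big)$ from Proposition~\ref{prop:ode} and $\varphi(t)\in\accentset{\,\,\,\circ}{\A} D^{m,p}_{n,N;\ell}$ for every $t$, the assertion $\varphi\in C^1\big([0,T],\accentset{\,\,\,\circ}{\A} D^{m,p}_{n,N;\ell}\big)$ follows immediately, since at this point the latter just means a $C^1$ curve in the ambient Banach manifold taking values in the closed subset.
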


\begin{Rem}
We prove in Section \ref{sec:volume_preserving_diffeomorphisms} that $\accentset{\,\,\,\circ}{\A}D^{m,p}_{n,N;0}$
is a real analytic submanifold in ${\A}D^{m,p}_{n,N;0}$ (Theorem \ref{th:volume_preserving_diffeomorphisms}).
However, note that this result is not used in the proof of Theorem \ref{th:main}.
\end{Rem}

\begin{Rem}
The same results also hold for the topological subgroup of volume preserving asymptotic diffeomorphisms
without log-terms $\accentset{\,\,\,\circ}{\A} D^{m,p}_{n,N}$, that is defined in a similar way as above. 
\end{Rem}

\medskip

\noindent Proposition \ref{prop:ode} and Corollary \ref{coro:ode_volume_preserving} allow us to define the 
{\em Lie group exponential map} on $\A D^{m,p}_{n,N;\ell}$.
More notably, these two statements allow us to use Arnold's approach to fluid dynamics \cite{Arn,EM,BB} and to 
re-write the Euler equation \eqref{eq:euler} as a dynamical system on the tangent bundle 
$T\big(\A D^{m,p}_{0,N;0}\big)\equiv \A D^{m,p}_{0,N;0}\times\A^{m,p}_{0,N;0}$ 
of the asymptotic group $\A D^{m,p}_{0,N;0}$. In order to do this we first need to study in more detail
the properties of the Laplace operator in asymptotic spaces.

\section{The Laplace operator in asymptotic spaces}\label{sec:laplace_operator}
In this section we study the (Euclidean) Laplace operator
$\Delta:=\sum\limits_{k=1}^d\frac{\partial^2}{\partial x_k^2}$ acting on asymptotic spaces in $\R^d$. 
At the end of the section we prove several properties of the right translation.

\medskip

It follows from Lemma \ref{lem:properties_log_spaces} in Appendix \ref{sec:appendix_properties} that for any 
$0\le n\le N$, $l\ge -n$ the mapping
$
\Delta : \A^{m,p}_{n,N;\ell}\to\A^{m-2,p}_{n+2,N+2;\ell-2}
$
is bounded. Let us consider the special case, $n=1$, $\ell=-1$, and replace $N$ by $N+1$. Since the functions in
$\A^{m,p}_{1,N+1;-1}$ are bounded on $\R^d$ and vanish at infinity, the operator
\[
\Delta : \A^{m,p}_{1,N+1;-1}\to \A^{m-2,p}_{3,N+3;-3}
\] 
is injective. To obtain a surjective map, we need to enlarge the domain space. Below we will define a space
${\widehat\A}^{m,p}_{1,N+1;-1}\supseteq \A^{m,p}_{1,N+1;-1}$ for which
\begin{equation}\label{eq:laplace_isomorphism}
\Delta : {\widehat\A}^{m,p}_{1,N+1;-1}\to \A^{m-2,p}_{3,N+3;-3}
\end{equation}
is an isomorphism. 
The functions that we add are of the form
\begin{equation}\label{def:ujk}
u_k^{j}:=\chi(r)\,\frac{e_k^{j}(\theta)(\log r)^k}{r^k}\,, \ \hbox{for}\ k=\max\{1,d-2\},...,N+1\ \hbox{and}\ 
j=1,\dots,s_k,
\end{equation}
where $s_k\ge 1$ denotes the multiplicity of the eigenvalue $\lambda_k:=k(k+2-d)\geq 0$ of the Laplace-Beltrami
operator $-\Delta_S$ on the unit sphere $\s^{d-1}$, and $\{e_k^{j}\}_{j=1}^{s_k}$ is a basis for 
the corresponding eigenspace. 

\begin{Rem}
For integer $k\ge d-2$ the quadratic expression $\lambda_k=k(k+2-d)$ is non-negative and enumerates
the eigenvalues of the ``positive'' Laplace-Beltrami operator $-\Delta_S$. These eigenvalues are usually enumerated by 
the quadratic expression $\mu_l=l(l+d-2)$, $l\ge 0$.
\end{Rem}

\noindent In order to compute $\Delta$ on these functions, we use the following formulas: 
for integers $k\ge 0$ and $l\ge 2$,
\begin{equation}\label{eq:Delta(l>= 2)}
\begin{array}{l}
\Delta\Big(\frac{a(\theta) (\log r)^l}{r^k}\Big)
=\frac{l(l-1) a(\theta) (\log r)^{l-2}-l(2k+2-d)a(\theta) (\log r)^{l-1}+
\big((\Delta_S a)(\theta)+k(k+2-d) a(\theta)\big)(\log r)^l}
{r^{k+2}}\,,
\end{array}
\end{equation}
while for $l=1$ and $l=0$ one has respectively,
\begin{equation}\label{eq:Delta(l=1)}
\Delta\Big(\frac{a(\theta)\log r}{r^k}\Big)=
\frac{-(2k+2-d)a(\theta)+\big((\Delta_S \, a)(\theta)+k(k+2-d) a(\theta)\big)\log r}{r^{k+2}}
\end{equation}
and
\begin{equation}\label{eq:Delta(l=0)}
\Delta\Big(\frac{a(\theta)}{r^k}\Big)=\frac{(\Delta_S \,a)(\theta)+k(k+2-d) a(\theta)}{r^{k+2}}\, .
\end{equation}
Now if we replace $a(\theta)$ by an eigenfunction $e_k^{j}(\theta)$ and let $l=k$ as in \eqref{def:ujk}, 
then all the terms involving $\Delta_S$, i.e.\ the highest power of $\log r$, drop out and we find
\begin{equation}\label{Lap(ujk)}
\Delta(u_k^{j})\in\A^{m-2,p}_{3,N+3;-3}\,.
\end{equation}

\noindent For $d\geq 3$, let us define
\begin{subequations}
\begin{equation}\label{eq:A-hat_structure}
{\widehat\A}^{m,p}_{1,N+1;-1}:=\A^{m,p}_{1,N+1;-1}\oplus
\mathop{\rm span}_{\R} \big\{u_k^j\,\big|\, 1\leq j\leq s_k, \ d-2\leq k\leq N+1\big\}
\end{equation}
and observe that
\begin{equation}\label{eqq:A-hat_structure}
{\widehat\A}^{m,p}_{1,N+1;-1}\subseteq \A_{1,N+1;0}^{m,p}.
\end{equation}
\end{subequations}
As a consequence of \eqref{Lap(ujk)}, we have that \eqref{eq:laplace_isomorphism}
is well-defined and bounded. Moreover, \eqref{eq:laplace_isomorphism} is  an injection since $k\geq d-2\geq 1$ implies 
that all the $u_k^j$ vanish at infinity. The proposition below will show that \eqref{eq:laplace_isomorphism} is 
in fact an isomorphism.

\smallskip

When $d=2$, convolution by the fundamental solution even on functions with compact support can generate 
a log-term, so we need to replace $\A^{m,p}_{1,N+1;-1}$ by the asymptotic space,
\begin{equation}\label{eq:A-1*}
\A^{m,p}_{1^*,N+1;-1}:=\mathop{\rm span}\limits_{\R}\big\{\chi(r)\log r\big\}\oplus\A^{m,p}_{1,N+1;-1}\,.
\end{equation}
Now we define for $d=2$,
\begin{subequations}
\begin{equation}\label{eq:A-hat_structure(d=2)}
{\widehat\A}^{m,p}_{1,N+1;-1}:=\A^{m,p}_{1^*,N+1;-1}\oplus
\mathop{\rm span}_{\R}\big\{u_k^j\,\big|\, 1\leq j\leq s_k, \ 1\leq k\leq N+1\big\},
\end{equation}
and observe that
\begin{equation}\label{eqq:A-hat_structure(d=2)}
{\widehat\A}^{m,p}_{1,N+1;-1}\subseteq\A^{m,p}_{1^*,N+1;0},\quad
\A^{m,p}_{1^*,N+1;0}:=\mathop{\rm span}\limits_{\R}\big\{\chi(r)\log r\big\}
\oplus\A^{m,p}_{1,N+1;0}.
\end{equation}
\end{subequations}
Since $\Delta:\A^{m,p}_{1^*,N+1;-1}\to  \A^{m-2,p}_{3,N+3;-3}$, we can again use \eqref{Lap(ujk)} to conclude 
that \eqref{eq:laplace_isomorphism} is well-defined and bounded. 
For $d=2$, functions in ${\widehat\A}^{m,p}_{1,N+1;-1}$ need not vanish at infinity; in fact, by \eqref{eq:A-1*} they
can grow logarithmically at infinity. However, the only harmonic functions on $\R^d$ with at most logarithmic growth are
constants, and ${\widehat\A}^{m,p}_{1,N+1;-1}$ has been purposely defined to not contain the constants. Hence 
\eqref{eq:laplace_isomorphism} is also injective for $d=2$.

\smallskip

The following confirms that \eqref{eq:laplace_isomorphism} is an isomorphism.

\begin{Prop}\label{prop:laplace_operator} 
For any $d\ge 2$, the map $\Delta : {\widehat\A}^{m,p}_{1,N+1;-1}\to \A^{m-2,p}_{3,N+3;-3}$ is an isomorphism.
If $d=2$, then the leading term in the asymptotics of $\Delta^{-1}(u)$, for $u\in\A^{m-2,p}_{3,N+3;-3}$, is
\begin{equation}
\frac{1}{2\pi}\,M(u)\,\chi(r)\log r,
\end{equation}
where $M(u):=\int_{\R^2}u(x)\,{\rm d}x$. 
\end{Prop}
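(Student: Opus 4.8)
The plan is to establish that $\Delta : {\widehat\A}^{m,p}_{1,N+1;-1}\to \A^{m-2,p}_{3,N+3;-3}$ is an isomorphism of Banach spaces. Since we have already observed that the map is bounded and injective (for all $d\ge 2$), by the open mapping theorem it suffices to prove surjectivity, i.e.\ that every $u\in \A^{m-2,p}_{3,N+3;-3}$ has a preimage of the prescribed asymptotic form. I would proceed by splitting $u$ according to the decomposition \eqref{eq:asymptotic_log_space}: write $u=\chi(r)\,\Phi(x)+f(x)$, where $\Phi$ collects the finitely many asymptotic terms $\big(a_k^j(\theta)(\log r)^j\big)/r^k$ with $3\le k\le N+3$, $0\le j\le k-3$, and $f\in W^{m-2,p}_{\gamma_{N+3}}$ is the genuinely decaying remainder. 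The two pieces are handled by different tools.

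For the asymptotic part $\chi(r)\Phi$, I would invert $\Delta$ term by term on each monomial $\big(a(\theta)(\log r)^l\big)/r^k$ using the explicit formulas \eqref{eq:Delta(l>= 2)}--\eqref{eq:Delta(l=0)} read in reverse: given a target $\big(b(\theta)(\log r)^l\big)/r^{k+2}$, one solves a triangular (in powers of $\log r$) system on $\s^{d-1}$ whose diagonal blocks are the operators $\Delta_S+k(k+2-d)$. This is invertible precisely when $k(k+2-d)=\lambda_k$ is \emph{not} an eigenvalue of $-\Delta_S$; the obstruction occurs exactly at $k=k(d)$ with $\lambda_k$ an eigenvalue, which is why the extra functions $u_k^j$ in \eqref{def:ujk} were adjoined — by \eqref{Lap(ujk)} they supply, modulo $\A^{m-2,p}_{3,N+3;-3}$-errors, the missing directions in the cokernel. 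So one peels off the leading asymptotic term of $u$, subtracts the Laplacian of an appropriately chosen combination of a monomial and (when needed) a $u_k^j$, and induces downward on $k$; the regularity bookkeeping $a_k^j\in H^{m+1+(N+1)-k,p}(\s^{d-1})$ is automatic since $\Delta_S^{-1}$ (on the complement of its kernel) gains two derivatives on the sphere. After finitely many steps the asymptotic part of the residual is killed and one is left with a remainder in a weighted Sobolev space.

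For the remainder part $f\in W^{m-2,p}_{\gamma_{N+3}}$, I would invoke the weighted-Sobolev theory of the Laplacian (McOwen, as cited after \eqref{eq:small_o} and in Remark on $\gamma_N$): $\Delta : W^{m,p}_{\delta}\to W^{m-2,p}_{\delta+2}$ is Fredholm for non-exceptional weights, with kernel the harmonic polynomials of degree below a threshold and cokernel described by moment conditions. Because $\gamma_{N+3}$ is large (of size $N+3+\gamma_0$), the relevant solution space is $W^{m,p}_{\gamma_{N+3}-2}=W^{m,p}_{\gamma_{N+1}}$, possibly up to a finite-dimensional space of explicit homogeneous terms $c_k(\theta)/r^k$ and, in $d=2$, a $\log r$ term — exactly the monomials already present in ${\widehat\A}^{m,p}_{1,N+1;-1}$ (and in $\A^{m,p}_{1^*,N+1;-1}$ for $d=2$). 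Thus the weighted inverse of $\Delta f$, corrected by a finite linear combination of such monomials to absorb the non-vanishing moments, lands in ${\widehat\A}^{m,p}_{1,N+1;-1}$. Combining the two pieces gives a preimage of $u$, proving surjectivity.

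The $d=2$ leading-term assertion I would extract from this same construction: the only place a $\chi(r)\log r$ contribution to $\Delta^{-1}(u)$ can arise is in the remainder step, where the Fredholm alternative for $\Delta$ on $\R^2$ forces the coefficient of the logarithm to be the total mass $\frac{1}{2\pi}\int_{\R^2}u$, since convolution of $u$ with the fundamental solution $\frac{1}{2\pi}\log|x|$ behaves like $\frac{1}{2\pi}M(u)\log|x|$ at infinity when $M(u)\ne 0$ (and all asymptotic terms of $u$, having $k\ge 3$, contribute nothing to the log coefficient). The main obstacle is the careful tracking of the cokernel of the sphere operators $\Delta_S+\lambda_k$ and of the weighted Laplacian simultaneously — i.e.\ verifying that the finite-dimensional spaces adjoined in \eqref{eq:A-hat_structure}/\eqref{eq:A-hat_structure(d=2)} are \emph{exactly} what is needed, no more and no less, so that the map is not merely surjective but also injective, which we have already checked by the harmonic-function/vanishing-at-infinity argument preceding the proposition.
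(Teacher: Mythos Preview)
Your proposal is correct and follows essentially the same approach as the paper: surjectivity via term-by-term inversion of the asymptotic monomials (the paper organizes this as an induction on the log-power $l$ for fixed $k$, which is exactly your ``triangular system''), combined with Lemma~\ref{lem:inverse_laplacian_remainder_space}---the packaged McOwen weighted-Sobolev result---for the remainder. For the $d=2$ leading coefficient the paper uses a direct Green's-identity computation, namely $\int_{\R^2}\Delta(\chi\log r)\,{\rm d}x=2\pi$ and $\int_{\R^2}\Delta f\,{\rm d}x=0$ for $f\in\A^{m,p}_{1,N+1;0}$, in place of your convolution argument, but this is a presentational difference only.
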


\begin{Rem}
As $e_k^{j}(\theta)$, $k=d-2,\dots,N+1$, $j=1,\dots, s_k$, are eigenfunctions of the Laplace-Beltrami operator on the
unit sphere, we see that they are polynomial in $\theta\equiv\frac{x}{|x|}$. Moreover, for $k=d-2$ we have $s_{d-2}=1$
and $e_{d-2}^1=\const$ (cf.\ \cite{Shubin}).  Note also that, for any $d\ge 2$, the space ${\widehat\A}^{m,p}_{1,N+1;-1}$
does not have asymptotically constant terms of the form $\chi(r)\cdot \const$, with $\const\ne 0$.
\end{Rem}

\noindent{\em Proof of Proposition \ref{prop:laplace_operator}.}
By the open mapping theorem, we need only show that the map
$\Delta : {\widehat\A}^{m,p}_{1,N+1;-1}\to \A^{m-2,p}_{3,N+3;-3}$ is surjective,
i.e.\   for any  $u\in\A^{m-2,p}_{3,N+3;-3}$ we have $u\in{\cal I}:=\Delta({\widehat\A}^{m,p}_{1,N+1;-1})$. 
Without loss of generality we can assume that
\[
u=\chi(r)\,\frac{a_{k+2}^l(\theta) (\log r)^l}{r^{k+2}}+f,\quad f\in W^{m-2,p}_{\gamma_{N+3}},
\quad a_{k+2}^l\in H^{m-1+(N+3)-(k+2)}(\s^{d-1}),
\]
where $1\le k\le N+1$ and $0\le l\le k-1$. For fixed $1\le k\le N+1$, we will use induction in $l$
to show $u\in{\cal I}$.
First consider $l=0$, i.e.\ 
\[
u=\chi(r)\,\frac{a_{k+2}^0(\theta) }{r^{k+2}}+f,\quad f\in W^{m-2,p}_{\gamma_{N+3}},
\quad a_{k+2}^l\in H^{m-1+(N+3)-(k+2)}(\s^{d-1}).
\] 
If $a_{k+2}^0$ is not an eigenfunction of 
$-\Delta_S$ with eigenvalue  $\lambda_k=k(k+2-d)$ then there exists $b_k^0\in H^{m+1+(N+1)-k}(\s^{d-1})$ 
such that $(\Delta_S+\lambda_k)\,b_k^0=a_{k+2}^0$. In this case, formula \eqref{eq:Delta(l=0)} implies that
\[
u=\chi\Delta\big(b_k^0/r^k\big)+f
=\Delta\big(\chi b_k^0/r^k\big)+{\tilde f},\quad\hbox{where}\  {\tilde f}\in W^{m-2,p}_{\gamma_{N+3}}\,.
\]
By Lemma \ref{lem:inverse_laplacian_remainder_space} in Appendix \ref{sec:appendix_properties} we know 
$\tilde f=\Delta K \tilde f=\Delta v$, 
where $v\in \widehat\A^{m,p}_{1,N+1,-1}$ (in fact, $v\in \A^{m,p}_{d-2,N+1}$ if $d\geq 3$, and 
$v\in \A^{m,p}_{1^*,N+1}$ if $d=2$).
This means that $u\in{\cal I}$. On the other hand,
if $a_{k+2}^0$ is an eigenfunction of $-\Delta_S$ with eigenvalue $\lambda_k$,\footnote{As the eigenvalues of
$-\Delta_S$ are non-negative, this could happen only if $k\ge d-2$.} we see from \eqref{eq:Delta(l=1)} above that
\[
u=\frac{\chi}{d-2-2k}\,\Delta\left(\frac{a_{k+2}^0\log r}{r^{k}}\right)+f
=\Delta\left(\frac{\chi \,a_{k+2}^0\log r}{(d-2-2k)\,r^k}\right)+{\tilde f},\ \hbox{where}\ {\tilde f}\in
W^{m-2,p}_{\gamma_{N+3}}\,.
\]
(Note that  $d-2-2k\not=0$ since $k\ge\max\{1,d-2\}$.) 
But we claim that  $\chi \, a^0_{k+2}\log r/r^k \in{\widehat\A}^{m,p}_{1,N+1;-1}$: if $k=1$ this is true since
$\chi \, a_{3}^0\log r/r$ is a linear combination of $\{u^j_1\}_{j=1}^{s_1}$, while for $1<k\leq N+1$ it is true since
$a^0_{k+2}\in C^\infty(\s^{d-1})$ implies $\chi \, a^0_{k+2}\log r/r^k \in{\A}^{m,p}_{1,N+1;-1}$. 
Using Lemma  \ref{lem:inverse_laplacian_remainder_space} as before, we again have $u\in{\cal I}$. This completes
the first step of the induction.

\smallskip

Now fix $0<l_0\le k-1$, and assume that $u\in{\cal I}$ for any $0\le l<l_0$.  Consider
\[
u=\chi(r)\,\frac{a_{k+2}^{l_0}(\theta) (\log r)^{l_0}}{r^{k+2}}+f,\quad f\in W^{m-2,p}_{\gamma_{N+3}},
\quad a_{k+2}^{l_0}\in H^{m-1+(N+3)-(k+2)}(\s^{d-1})\,.
\]
If $a_{k+2}^{l_0}$ is not an eigenfunction of $-\Delta_S$ with eigenvalue $\lambda_k$, we can find
$b_k^{l_0}\in H^{m+1+(N+1)-k}(\s^{d-1})$ satisfying $(\Delta_S+\lambda_k)\,b_k^{l_0}=a_{k+2}^{l_0}$. 
Assuming $l_0\geq 2$, we use \eqref{eq:Delta(l>= 2)} to write
\[
u=\Delta\left(\frac{\chi \, b_k^{l_0}(\log r)^{l_0}}{r^k}\right)+{\tilde f}
+\chi\,l_0\,b_k^{l_0}\left(\frac{(2k+2-d)(\log r)^{l_0-1}-(l_0-1)(\log r)^{l_0-2}}{r^{k+2}}\right),
\]
where ${\tilde f}\in W^{m-2,p}_{\gamma_{N+3}}$. (If $l_0=1$, we use \eqref{eq:Delta(l=1)} to get a similar, 
but simpler, expression.)
By the induction hypothesis, the third term on the right is in ${\cal I}$, and we can
again use Lemma  \ref{lem:inverse_laplacian_remainder_space} to conclude that the second term on the right 
is in ${\cal I}$; hence  $u\in{\cal I}$.
Otherwise, when $a_{k+2}^{l_0}$ is an eigenfunction of $-\Delta_S$ with eigenvalue $\lambda_k$, we
obtain from formula \eqref{eq:Delta(l>= 2)}  that
\[
u= \Delta\left(\frac{\chi\, a_{k+2}^{l_0}(\log r)^{l_0+1}}{(l_0+1)(d-2-2k)\,r^k}\right)+{\tilde f}
+\frac{\chi\, l_0 \,a^{l_0}_{k+2}(\log r)^{l_0-1}}{(2k+2-d)\,r^{k+2}},
\]
where ${\tilde f}\in W^{m-2,p}_{\gamma_{N+3}}$. (Again recall $2k+2-d\not=0$.) By the induction hypothesis,
the third term on the right is in $\cal I$. We also claim that 
$\chi \,a_{k+2}^{l_0}(\log r)^{l_0+1}/r^k\in{\widehat\A}^{m,p}_{1,N+1;-1}$: if $l_0+1=k$, this is true since
$\chi \,a_{k+2}^{l_0}(\log r)^{k}/r^k$ is a linear combination of $\{u_k^j\}_{j=1}^{s_k}$, while for $l_0+1<k$ it 
is true since $a_{k+2}^{l_0}\in C^\infty(S^{d-1})$ implies 
$\chi \,a_{k+2}^{l_0}(\log r)^{l_0+1}/r^k\in{\A}^{m,p}_{1,N+1;-1}$.
Using Lemma \ref{lem:inverse_laplacian_remainder_space} again, we conclude that $u\in{\cal I}$.
This completes the induction, and hence the proof of surjectivity.

Let us now prove the second statement of the Proposition. By Green's identity, for any $3<R<\infty$
\begin{equation}\label{eq:mean_2pi}
\iint_{\R^2}\Delta\big(\chi(r)\log r\big)\,{\rm d}x=\iint_{\{|x|\le R\}}\Delta\big(\chi(r)\log r\big)\,{\rm d}x=
\int_{\{|x|=R\}}\frac{\partial}{\partial r}\big(\log r\big)\,{\rm d}s=2\pi
\end{equation}
where ${\rm d}s$ is the length form on the circle $\big\{x\in\R^2\,\big|\,|x|=R\big\}$.
Similarly, for any $f\in\A^{m,p}_{1,N+1;0}$ we have 
\begin{equation}\label{eq:mean_zero}
\iint_{\R^2}\Delta f\,{\rm d}x=\lim\limits_{R\to\infty}\int_{\{|x|=R\}}(\partial f/\partial r)\,{\rm d}s=0
\end{equation}
as $\partial f/\partial r=O(1/r^2)$.
Now, take an arbitrary $u\in\A^{m-2,p}_{3,N+3;-3}$. We already proved that 
$\Delta^{-1}(u)\in{\widehat\A}^{m,p}_{1,N+1;-1}$. Hence, 
\[
\Delta^{-1}(u)=c\,\chi(r)\log r+f
\]
where $f\in\A^{m,p}_{1,N+1;0}$. This gives, $u=c\,\Delta\big(\chi(r\log r\big)+\Delta f$.
By integrating this equality over $\R^2$ we see, in view of \eqref{eq:mean_2pi} and \eqref{eq:mean_zero}, 
that $M(u)=2\pi c$. This completes the proof of Proposition \ref{prop:laplace_operator}.
\finishproof

\begin{Rem}\label{rem:laplace_operator}
Define
\begin{equation*}
{\widehat\A}^{m,p}_{0,N;0}:=\A^{m,p}_{{\widetilde 0},N;0}\oplus
\mathop{\rm span}_{\R}\big\{v_k^j\,\big|\,1\le j\le s_k, d-2\le k\le N\big\}
\end{equation*}
where $v_0^1:=\chi(r) (\log r)^2$ and
$v_k^j:=\chi(r) e^j_k(\theta) (\log r)^{k+1}/r^k$ for $1\le j\le s_k$, $1\le k\le N$, and, for $d\ge 3$,
\begin{equation*}
\A^{m,p}_{{\widetilde 0},N;0}:=\A^{m,p}_{[0],N;0},\quad\text{where}\quad
\A^{m,p}_{[0],N;0}:=\Big\{u\in\A^{m,p}_{0,N;0}\,\Big|\,\int_{\omega\in\s^{d-1}} a_0(\omega)\,{\rm d}s(\omega)=0\Big\},
\end{equation*}
and, for $d=2$,
\begin{equation*}
\A^{m,p}_{{\widetilde 0},N;0}:=\mathop{\rm span}_{\R}\big\{\chi(r) \log r\big\}\oplus\A^{m,p}_{[0],N;0}\,.
\end{equation*}
Clearly, ${\widehat\A}^{m,p}_{0,N;0}\subseteq\A^{m,p}_{0,N;2}$.
The arguments used in the proof of Proposition \ref{prop:laplace_operator} imply that 
\begin{equation}\label{eq:laplace_isomorphism_general}
\Delta : {\widehat\A}^{m,p}_{0,N;0}\to \A^{m-2,p}_{2,N+2;-2},
\end{equation}
is an isomorphism. Moreover we have the following commutative diagram,
\[
\begin{array}{ccccc}
\A^{m+1,p}_{0,N;2}&\supseteq&{\widehat\A}^{m+1,p}_{0,N;0}&\stackrel{\Delta}{\rightarrow}&
\A^{m-1,p}_{2,N+2;-2}\\
\partial_j\downarrow&&\partial_j\downarrow&&\partial_j\downarrow\\
\A^{m,p}_{1,N+1;1}&\supseteq&{\widehat\A}^{m,p}_{1,N+1;-1}&\stackrel{\Delta}{\rightarrow}&
\A^{m-2,p}_{3,N+3;-3}\\
\end{array}
\]
where the downward arrow denotes the partial differentiation with respect to the $j$-th variable in $\R^d$. 
In particular,
we see that $\Delta^{-1}$ commutes with $\frac{\partial}{\partial x_j}$, where $\Delta^{-1}$ denotes the inverse
of the maps $\Delta$ appearing in the diagram above.
\end{Rem}
\noindent In what follows, we will denote by $\Delta^{-1}$ the inverse of
\eqref{eq:laplace_isomorphism} or \eqref{eq:laplace_isomorphism_general}, depending on the context.

\medskip\medskip

For any $\varphi\in\A D^{m,p}_{0,N;0}$ consider the {\em conjugate Laplace} operator,
\begin{equation}\label{eq:conjugate_laplacian}
\Delta_\varphi:=R_\varphi\circ\Delta\circ R_{\varphi^{-1}}
\end{equation}
where $R_\varphi v$ denotes the {\em right-translation} of a function $v$ by 
$\varphi$, $R_\varphi v:=v\circ\varphi$.
We begin by studying properties of $R_\varphi$ acting on asymptotic spaces.

\begin{Lem}\label{lem:composition*}
Let $a(\theta)=a(\theta_1,...,\theta_d)$ be a polynomial function in the variables $(\theta_1,...,\theta_d)$.
Take $\theta\equiv\frac{x}{|x|}$ and consider the homogeneous of degree zero function $a(\theta)$ in $x$. 
Then for any $\varphi\in\A D^{m,p}_{0,N;0}$, $k\ge 0$ and $\ell\ge -k$, and $N^-<N$,
\[
\Big(\chi(r)\,\frac{a(\theta) (\log r)^{k+\ell}}{r^k}\Big)\circ\varphi-\chi(r)\,\frac{a(\theta) (\log r)^{k+\ell}}{r^k}\in
\A^{m,p}_{k+1,N^-+k+1;\ell-1}\,.
\]
In the case when $k+\ell=0$, the statement above is true also with $N^-=N$.
Moreover, 
\[
\big(\chi(r)\log r\big)\circ\varphi-\chi(r)\log r\in\A^{m,p}_{1,N+1;-1}\,.
\]
\end{Lem}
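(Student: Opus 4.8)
The plan is to reduce everything to a Taylor-expansion computation for the composition of a homogeneous-of-degree-zero function with an asymptotic diffeomorphism, using the structure $\varphi = \id + w$ with $w \in \A^{m,p}_{0,N;0}$. First I would write $\varphi(x) = x + w(x)$ where $w = \chi(r)(b_0(\theta) + \cdots) + g$ with $g \in W^{m,p}_{\gamma_N}$, so that $|\varphi(x)| = |x|\big(1 + O(1/|x|)\big)$ and, more precisely, $|\varphi(x)|^2 = |x|^2 + 2x\cdot w + |w|^2$. The key observation is that $a(\theta)$ is a ratio of polynomials, $a(x/|x|) = P(x)/|x|^{\deg P}$ with $P$ homogeneous, and $(\log r)^{k+\ell}/r^k$ is a smooth function of $|x|$ for large $|x|$; so I would expand $F(\varphi(x)) - F(x)$ where $F(x) = \chi(r) a(\theta)(\log r)^{k+\ell}/r^k$ along the segment from $x$ to $\varphi(x)$, getting $F(\varphi(x)) - F(x) = \int_0^1 (\nabla F)(x + tw)\cdot w\, dt$. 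The point is that $\nabla F$ gains one power of decay: $\nabla F \in \A^{m-1,p}_{k+1,\ldots;\ell-1}$-type behavior (from \eqref{eq:Delta(l=0)}-style differentiation rules, each $\partial_j$ sends $a(\theta)(\log r)^{k+\ell}/r^k$ to a combination of $(\log r)^{k+\ell}/r^{k+1}$ and $(\log r)^{k+\ell-1}/r^{k+1}$ with polynomial-in-$\theta$ coefficients), and pairing with $w$, which is bounded and $O(1)$, keeps us in $\A^{m,p}_{k+1,N^-+k+1;\ell-1}$ for $N^- < N$ — the loss from $N$ to $N^-$ coming from the fact that $w$ itself only has an expansion to order $N$, not better, so the product of a clean $1/r^{k+1}$-term with $w$'s expansion terminates one order early in a controlled way.

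Second I would make this rigorous using the Banach algebra property of the asymptotic spaces (Lemma \ref{lem:properties_log_spaces}) and Theorem \ref{th:the_group}(a) on continuity of composition: the map $t \mapsto F(\id + tw)$ is a $C^1$ curve in an appropriate asymptotic space, and the integral $\int_0^1 (\nabla F)\circ(\id+tw)\cdot w\, dt$ is a Bochner integral of a continuous family in $\A^{m,p}_{k+1,N^-+k+1;\ell-1}$, hence lands in that closed subspace. The substep requiring care is showing that $(\nabla F)\circ(\id + tw)$ stays in the right space uniformly in $t$: here I would invoke the already-established fact (implicit in the proof of Theorem \ref{th:the_group} in \cite{McOwenTopalov2}, or provable by the same Taylor-with-remainder argument applied to the polynomial $a$) that composing a homogeneous-degree-zero polynomial-in-$\theta$ function with $\id + tw$ produces something that differs from the original by a lower-order asymptotic term, so that $(\nabla F)\circ(\id+tw)$ has the same leading asymptotic profile as $\nabla F$.

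For the $k+\ell = 0$ case I would note that then $F = \chi(r) a(\theta)$ has $\nabla F = O(1/r)$ with polynomial-in-$\theta$ coefficients and no log loss, and crucially the differentiation does not reduce a log power (there is none), so the expansion argument gives a term in $\A^{m,p}_{1,N+1;-1} = \A^{m,p}_{1,N+1;\ell-1}$ — with the full index $N$ rather than $N^- < N$, because the one power of decay gained from $\nabla F$ exactly compensates the one order of expansion lost in multiplying by $w$. The final $\chi(r)\log r$ statement is the special case $a \equiv 1$, $k = 0$, $\ell = 1$ (so $k + \ell = 1 \neq 0$), but one checks it directly: $(\chi(r)\log r)\circ\varphi - \chi(r)\log r = \log|\varphi(x)| - \log|x| + (\text{cutoff error}) = \log\big(1 + (2x\cdot w + |w|^2)/|x|^2\big) + \ldots$, and since $(2x\cdot w + |w|^2)/|x|^2 \in \A^{m,p}_{1,N+1;-1}$ (as $x\cdot w/|x|^2$ is $O(1/r)$ with the right structure) and $\log(1+\cdot)$ is analytic near $0$ with $\log 1 = 0$, composing with the analytic function $\log(1+\cdot)$ preserves membership in the Banach algebra $\A^{m,p}_{1,N+1;-1}$; the cutoff discrepancy is compactly supported, hence harmless.

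The main obstacle I expect is bookkeeping the precise order $N^-$ versus $N$ and the log-power shift $\ell \to \ell - 1$ simultaneously: one must track how differentiation of $a(\theta)(\log r)^{k+\ell}/r^k$ interacts with the finite-order expansion of $w$, and verify that no log power is gained (only possibly lost) and that exactly one power of $1/r$ is gained net when $k + \ell = 0$. This is where formulas \eqref{eq:Delta(l>= 2)}--\eqref{eq:Delta(l=0)} (or rather their first-order analogues for $\partial_j$ rather than $\Delta$) and the algebra structure of $\A^{m,p}_{n,N;\ell}$ from Appendix \ref{sec:appendix_properties} must be combined carefully; everything else is a routine Taylor-remainder-plus-Banach-algebra argument.
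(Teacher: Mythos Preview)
Your mean-value integral approach $F(\varphi)-F(x)=\int_0^1(\nabla F)(x+tw)\cdot w\,dt$ is a natural idea but has a genuine gap at exactly the point you flag as ``requiring care.'' To place $(\nabla F)\circ(\id+tw)$ in the correct asymptotic space with the right log index $\ell-1$, you need precisely the statement of the lemma applied to $\nabla F$ (whose asymptotic pieces are again of the form $b(\theta)(\log r)^j/r^{k+1}$ with $b$ polynomial). Theorem~\ref{th:the_group} only gives continuity of composition on $\A^{m,p}_{n,N;\ell}$ for $-n\le\ell\le 0$ and does not track the log index sharply enough to recover $\ell-1$ in the output; invoking it would lose the refinement you need. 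So the argument is circular: you would be assuming the lemma for $\nabla F$ (with $k+1$ in place of $k$) to prove it for $F$, and there is no inductive parameter that decreases. A secondary issue is that $\id+tw$ need not lie in $\A D^{m,p}_{0,N;0}$ for intermediate $t\in(0,1)$ (the determinant $\det(\Id+t[{\rm d}w])$ can vanish even if it is positive at $t=0,1$), so Theorem~\ref{th:the_group} cannot be invoked on the integrand as stated.

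The paper avoids this circularity by working algebraically on the building blocks rather than on $F$ as a whole. On $B_R^c$ for $R$ large it shows, via convergent power series in the Banach algebra $\A^{m,p}_{1,N+1;-1}(B_R^c)$, that $r\circ\varphi=r(1+\tilde r)$, $(\log r)\circ\varphi=\log r+\tilde p$, and $\theta\circ\varphi=\theta+\tilde\theta$ with $\tilde r,\tilde p,\tilde\theta\in\A^{m,p}_{1,N+1;-1}(B_R^c)$; the polynomial $a(\theta)$ then inherits the same conclusion by finite algebra. Reassembling the factors via Lemma~\ref{lem:properties_log_spaces} gives the result on $B_R^c$, and the compact part is handled separately. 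Your direct power-series computation for $\chi(r)\log r$ is essentially this argument for that one building block, and is correct; the point is that the paper does the \emph{same} kind of direct expansion for $r$ and $\theta$ as well, rather than trying to differentiate and integrate back. If you want to salvage your approach, you would first have to establish those three building-block facts independently --- but that is exactly the paper's proof.
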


\begin{proof}
Take $\varphi\in\A D^{m,p}_{0,N;0}$. Then $\varphi(x)=x+w(x)$ where $w\in\A^{m,p}_{0,N;0}$.
Let us first consider the composition $r\circ\varphi$.
We have,
\begin{eqnarray}
(r\circ\varphi)(x)&=&|x+w(x)|=r\,\Big|\theta+\frac{w(x)}{r}\Big|\nonumber\\
&=&r\,\sqrt{1+2\big(\theta,w/r\big)+\big(w/r,w/r\big)}\,,\label{eq:composition_r1}
\end{eqnarray}
where $(\cdot,\cdot)$ denotes the Euclidean scalar product in $\R^d$.
In view of Lemma \ref{lem:asymptotic_spaces_complement} $(b)$ in Appendix \ref{sec:appendix_properties}, 
\[
w/r\in\A^{m,p}_{1,N+1;-1}(B_R^c),\quad R>0,
\]
where $B_R^c:=\big\{x\in\R^d\,\big|\,|x|>R\big\}$ and the space $\A^{m,p}_{1,N+1;-1}(B_R^c)$ equipped with the 
norm \eqref{eq:the_norm_complement} is defined in Appendix \ref{sec:appendix_properties}.
As $w/r=o(1)$ as $|x|\to\infty$ we can choose $r_0>0$ such that $|w(x)/r|<1/4$ for any $|x|>r_0$. Then
for any $|x|>r_0$ we have $\big|2\big(\theta,w/r\big)+\big(w/r,w/r\big)\big|< 3/4$, and hence in view of 
\eqref{eq:composition_r1}, 
\begin{equation}\label{eq:the_series1}
r\circ\varphi=r\,\Big(1+\sum_{k=1}^\infty c_k\Big(2\big(\theta,w/r\big)+\big(w/r,w/r\big)\Big)^k\Big),
\end{equation}
where the series converges uniformly in $|x|>r_0$, and, by Cauchy's estimate, $|c_k|\le\const\,(4/3)^k$ for some
constant independent of $k\ge 1$. 
Denote
\[
{\widetilde w}:=2\big(\theta,w/r\big)+\big(w/r,w/r\big).
\]
It follows from Lemma \ref{lem:asymptotic_spaces_complement} $(b)$
and Lemma \ref{lem:properties_log_spaces} that
\[
{\widetilde w}\in\A^{m,p}_{1,N+1;-1}(B_R^c)\,.
\]
(As $\theta\in\A^{m,p}_{0,N+1;0}(B_R^c)$ and $w/r\in\A^{m,p}_{1,N+1;-1}(B_R^c)$ we have from 
Lemma \ref{lem:asymptotic_spaces_complement} $(b)$ and Lemma \ref{lem:properties_log_spaces} $(d')$ that 
$(\theta,w/r\big)\in\A^{m,p}_{1,N+1;-1}(B_R^c)$.)
Moreover, in view of \eqref{eq:the_series1} we have
\begin{equation}\label{eq:the_series2}
r\circ\varphi=r\Big(1+\sum_{k=1}^{N+1} c_k{\widetilde w}^k+
\sum_{k\ge N+2} c_k{\widetilde w}^{N+2}{\widetilde w}^{k-N-2}\Big)\,.
\end{equation}
By Lemma \ref{lem:asymptotic_spaces_complement} $(b)$ and Lemma \ref{lem:properties_log_spaces} we have
$\sum_{k=1}^{N+1} c_k{\widetilde w}^k\in\A^{m,p}_{1,N+1;-1}(B_R^c)$.
Further, note that by Lemma \ref{lem:asymptotic_spaces_complement} $(b)$ and 
Lemma \ref{lem:properties_log_spaces} $(a)$ we have that
${\widetilde w}^k\in\A^{m,p}_{k,N+k;-k}(B_R^c)\subseteq W^{m,p}_{\gamma_{N+1}}(B_R^c)$ for any $k\ge N+2$.
This, together with Lemma \ref{lem:asymptotic_spaces_complement} $(c)$ then implies that we can choose 
$r_0>0$ larger if necessary so that, for any $R>r_0>0$ and for any $k\ge N+2$, 
\begin{eqnarray}
\big\|{\widetilde w}^{N+2}{\widetilde w}^{k-N-2}\big\|_{\A^{m,p}_{1,N+1;-1}(B^c_R)}&\le&
\|{\widetilde w}^{N+2}\|_{\A^{m,p}_{1,N+1;-1}(B^c_R)}
\big(C\|{\widetilde w}\|_{\A^{m,p}_{1,N+1;-1}(B^c_R)}\big)^{k-N-2}\nonumber\\
&<&\|{\widetilde w}^{N+2}\|_{\A^{m,p}_{1,N+1;-1}(B^c_R)}\big(3/4\big)^{k-N-2},\label{eq:convergence}
\end{eqnarray}
where $C>0$ is the constant in Lemma \ref{lem:asymptotic_spaces_complement} $(c)$ and where
we used that $\|{\widetilde w}\|_{\A^{m,p}_{1,N+1;-1}(B_R^c)}\to 0$ as $R\to\infty$.
Inequality \eqref{eq:convergence} implies that the series in \eqref{eq:the_series2} converges
in $\A^{m,p}_{1,N+1;-1}(B_R^c)$.
Hence, for any $R>r_0>0$,
\begin{equation}\label{eq:composition_r}
r\circ\varphi=r\,\big(1+{\tilde r}\big),\,\,\,\,{\tilde r}\in\A^{m,p}_{1,N+1;-1}(B^c_R)\,.
\end{equation}
Using  \eqref{eq:composition_r} and arguing in a similar way as above, we see that, for $r_0>0$, 
taken larger if necessary, and for any $R>r_0>0$,
\begin{equation}\label{eq:log_r}
(\log r)\circ\varphi=\log r+{\tilde p},\,\,\,\,{\tilde p}\in\A^{m,p}_{1,N+1;-1}(B^c_R),
\end{equation}
\begin{equation}\label{eq:composition_theta}
\theta\circ\varphi-\theta\in\A^{m,p}_{1,N+1;-1}(B^c_R).
\end{equation}
As $a(\theta)$ is assumed a polynomial function of $\theta$ we obtain from \eqref{eq:composition_theta} that
\begin{equation}\label{eq:composition_a}
a(\theta)\circ\varphi-a(\theta)\in\A^{m,p}_{1,N+1;-1}(B^c_R)\,.
\end{equation}
It follows from \eqref{eq:composition_r}--\eqref{eq:composition_a} and the Banach algebra property of  
$\A^{m,p}_{1,N+1;-1}(B^c_R)$ (see Lemma \ref{lem:asymptotic_spaces_complement} $(b)$) that, for some 
$R>\max\{r_0, 2\}$,
\begin{equation}\nonumber
\Big(\frac{a(\theta) (\log r)^{k+\ell}}{r^k}\Big)\circ\varphi-\frac{a(\theta) (\log r)^{k+\ell}}{r^k}\in
\A^{m,p}_{k+1,N^-+k+1;\ell-1}(B^c_R)
\end{equation}
for any $N^-<N$, and for $N^-=N$ if $k+\ell=0$.
Finally, take $R'>R$, $\varphi\in\A D^{m,p}_{0,N;0}$, and $\zeta\in C^\infty(\R^d)$ with compact support so that 
$\zeta|_{\varphi(B_{R'})}\equiv 1$, where $B_{R'}$ is the open ball of radius $R'$ in $\R^d$ 
centered at the origin. As $\zeta\chi\,\frac{a(\theta) (\log r)^{k+\ell}}{r^k}$ is $C^\infty$-smooth with compact
support we have from Theorem \ref{th:the_group} that 
$\Big(\zeta\chi\,\frac{a(\theta) (\log r)^{k+\ell}}{r^k}\Big)\circ\varphi\in\A^{m,p}_{0,N;0}$. 
This, together with the choice of $\zeta$, then implies
\[
\Big(\chi\,\frac{a(\theta) (\log r)^{k+\ell}}{r^k}\Big)\circ\varphi\Big|_{B_{R'}}=
\Big(\zeta\,\chi\,\frac{a(\theta) (\log r)^{k+\ell}}{r^k}\Big)\circ\varphi\Big|_{B_{R'}}\in H^{m,p}(B_{R'}).
\]
The conclusion in the Lemma then follows from Lemma \ref{lem:asymptotic_spaces_complement} $(a)$.
The last statement of the Lemma follows in the same way from \eqref{eq:log_r}.
\end{proof}

\medskip

\noindent As a consequence of this Lemma we get

\begin{Coro}\label{coro:invariance_1}
Assume that $1+\frac{d}{p}<m_0\le m$ and $n\ge 0$. Then for any $\varphi\in\A D^{m,p}_{0,N;0}$ we have
$R_{\varphi}(\A^{m_0,p}_{n,N+n;-n})\subseteq\A^{m_0,p}_{n,N+n;-n}$ and the map
\[
R_\varphi : \A^{m_0,p}_{n,N+n;-n}\to \A^{m_0,p}_{n,N+n;-n}
\]
is an isomorphism.
\end{Coro}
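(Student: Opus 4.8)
The plan is to decompose an arbitrary element of $\A^{m_0,p}_{n,N+n;-n}$ into its asymptotic part and its remainder, and to treat the two pieces separately using Lemma \ref{lem:composition*} and Theorem \ref{th:the_group}. Write $u\in\A^{m_0,p}_{n,N+n;-n}$ as
\[
u=\chi(r)\sum_{k=n}^{N+n}\frac{\sum_{j=0}^{k-n}a_k^j(\theta)(\log r)^j}{r^k}+f,\qquad f\in W^{m_0,p}_{\gamma_{N+n}}.
\]
Each summand has the form $\chi(r)\,a(\theta)(\log r)^j/r^k$ with $0\le j\le k-n$, i.e.\ $j=k+\ell$ with $\ell=j-k$ ranging in $-k\le\ell\le -n$; since $a_k^j\in H^{m_0+1+(N+n)-k,p}(\s^{d-1})$ is smooth it may be written as a polynomial in $\theta$ (being a finite sum of spherical harmonics restricted to $\s^{d-1}$) plus, if needed, an approximation argument — but in fact Lemma \ref{lem:composition*} only needs $a(\theta)$ polynomial, so I would first reduce to that case by expanding $a_k^j$ in spherical harmonics, noting the sum is finite is false in general, so instead I invoke the version of Lemma \ref{lem:composition*} for general smooth coefficients that is implicit in its proof (the only place polynomiality of $a$ is used is in \eqref{eq:composition_a}, which holds for smooth $a$ by the Banach-algebra/composition properties of $\A^{m,p}_{1,N+1;-1}(B_R^c)$ together with Theorem \ref{th:the_group}). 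Granting this, Lemma \ref{lem:composition*} gives, for each summand,
\[
\Big(\chi(r)\,\frac{a_k^j(\theta)(\log r)^{j}}{r^k}\Big)\circ\varphi-\chi(r)\,\frac{a_k^j(\theta)(\log r)^{j}}{r^k}\in\A^{m_0,p}_{k+1,N^-+k+1;j-k-1}\subseteq\A^{m_0,p}_{n,N+n;-n},
\]
where the inclusion uses $k+1>k\ge n$, a comparison of the leading exponents, and the monotonicity properties of the asymptotic spaces in their indices (Lemma \ref{lem:properties_log_spaces}); here one must choose $N^-<N$ appropriately, but the correction term is lower-order so it lands in $\A^{m_0,p}_{n,N+n;-n}$ regardless. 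Hence $R_\varphi u$ differs from $\chi(r)\sum\frac{\sum a_k^j(\log r)^j}{r^k}$ by an element of $\A^{m_0,p}_{n,N+n;-n}$, and the leading (non-remainder) part is unchanged.

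For the remainder $f\in W^{m_0,p}_{\gamma_{N+n}}$, I would argue that $R_\varphi f=f\circ\varphi\in W^{m_0,p}_{\gamma_{N+n}}$, i.e.\ that right translation by an asymptotic diffeomorphism preserves weighted Sobolev spaces with the given weight. This follows from the chain rule (each derivative of $f\circ\varphi$ up to order $m_0$ is a sum of terms $(\partial^\beta f)\circ\varphi$ times products of derivatives of $w=\varphi-\id\in\A^{m_0,p}_{0,N;0}$, which are bounded with appropriate decay), the fact that $\x$ and $\x\circ\varphi$ are comparable (because $|w(x)|=o(|x|)$, a consequence of $w\in\A^{m_0,p}_{0,N;0}$ and $m_0>d/p$), and a change of variables using $\det[\mathrm d_x\varphi]$ bounded above and below (true for $\varphi\in\A D^{m_0,p}_{0,N;0}$, as $[\mathrm d_x\varphi]=\Id+[\mathrm d_x w]$ with $[\mathrm d_x w]\to 0$ at infinity). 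Combining the two pieces, $R_\varphi u\in\A^{m_0,p}_{n,N+n;-n}$, so $R_\varphi$ maps $\A^{m_0,p}_{n,N+n;-n}$ into itself; boundedness of this linear map is quantitative from the same estimates (or, if one prefers, from the closed graph theorem together with Theorem \ref{th:the_group}$(a)$).

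Finally, to see $R_\varphi$ is an isomorphism: $\varphi\in\A D^{m_0,p}_{0,N;0}$, and by (the relevant version of) Theorem \ref{th:the_group}$(a)$ its inverse $\varphi^{-1}$ again lies in $\A D^{m_0,p}_{0,N;0}$ (here one needs that inversion preserves the class at regularity $m_0$; if Theorem \ref{th:the_group}$(a)$ is only stated for $\A D^{m+1,p}$ one applies it with $m+1=m_0$, which is fine since $m_0>1+d/p$ and in the paper's standing assumption $m>3+d/p$ one has room). Then $R_{\varphi^{-1}}$ maps $\A^{m_0,p}_{n,N+n;-n}$ to itself by the same argument, and $R_\varphi\circ R_{\varphi^{-1}}=R_{\varphi^{-1}\circ\varphi}=R_{\id}=\mathrm{id}$ and likewise $R_{\varphi^{-1}}\circ R_\varphi=\mathrm{id}$ on $\A^{m_0,p}_{n,N+n;-n}$, so $R_\varphi$ is a Banach-space isomorphism with inverse $R_{\varphi^{-1}}$. \emph{The main obstacle} I expect is the bookkeeping of indices when tracking where each translated asymptotic monomial lands — verifying that every correction term produced by Lemma \ref{lem:composition*} really does sit inside $\A^{m_0,p}_{n,N+n;-n}$ (in particular that the shift $N\mapsto N^-<N$ does not cost us the top-order asymptotic slot $k=N+n$), and, secondarily, handling smooth rather than merely polynomial angular coefficients in Lemma \ref{lem:composition*} cleanly.
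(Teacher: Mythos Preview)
Your proposal takes a different route from the paper and runs into a real obstacle at exactly the point you flagged as ``secondary.'' The paper does \emph{not} decompose $v\in\A^{m_0,p}_{n,N+n;-n}$ into asymptotic monomials and a remainder. Instead it writes, for $|x|>R$,
\[
v=\frac{\tilde v}{r^n},\qquad \tilde v\in\A^{m_0,p}_{0,N;0}(B^c_R),
\]
and then uses Lemma \ref{lem:composition*} only on the single factor $1/r^n$ (whose angular coefficient is the constant $1$, certainly a polynomial) to get $1/(r\circ\varphi)^n=(1+\tilde r)/r^n$ with $\tilde r\in\A^{m,p}_{0,N;0}(B^c_R)$. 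The factor $\tilde v$ is handled by Theorem \ref{th:the_group}, which already asserts that composition with $\varphi$ preserves $\A^{m_0,p}_{0,N;0}$. Multiplying the two and invoking the Banach-algebra property gives $v\circ\varphi\in\A^{m_0,p}_{n,N+n;-n}(B^c_R)$; the bounded region is then dealt with exactly as in the last paragraph of the proof of Lemma \ref{lem:composition*}.

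The gap in your approach is that the angular coefficients $a_k^j$ are not polynomial---they lie only in the Sobolev space $H^{m_0+1+(N+n)-k,p}(\s^{d-1})$, and in particular need not even be smooth. Lemma \ref{lem:composition*} is stated (and proved) for polynomial $a(\theta)$ precisely so that $a(\theta)\circ\varphi-a(\theta)$ can be controlled via the Banach-algebra property applied to finitely many products of the $\theta_j\circ\varphi-\theta_j$. For a merely Sobolev $a$, establishing $a(\theta)\circ\varphi-a(\theta)\in\A^{m,p}_{1,N+1;-1}(B^c_R)$ is essentially equivalent to proving the corollary itself for the space $\A^{m_0,p}_{0,N;0}$---which is exactly what Theorem \ref{th:the_group} provides, and why the paper's factorization $v=\tilde v/r^n$ is the right move. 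Your treatment of the remainder $f\in W^{m_0,p}_{\gamma_{N+n}}$ and of the isomorphism step via $R_{\varphi^{-1}}$ is fine; the index bookkeeping you worried about also works out. But the term-by-term translation of asymptotic monomials cannot be carried through without first extending Lemma \ref{lem:composition*} to Sobolev angular data, which you have not done.
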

\begin{proof} Take $\varphi\in\A D^{m,p}_{0,N;0}\subseteq\A D^{m_0,p}_{0,N;0}$.
 In view of Lemma \ref{lem:composition*} there exists $R>0$ such that
\[
\frac{1}{(r\circ\varphi)^n}=\frac{1}{r^n}\big(1+{\tilde r}\big),\,\,\,{\tilde r}\in\A^{m,p}_{0,N;0}(B^c_R)\,.
\]
Take $v\in\A^{m_0,p}_{n,N+n;-n}$. Then for $|x|>R$ we can write $v={\tilde v}/r^n$ where 
${\tilde v}\in\A^{m_0,p}_{0,N;0}(B^c_R)$.
By taking $R>0$ larger if necessary, we see that
\begin{equation}\label{eq:shift}
v\circ\varphi=\frac{1}{(r\circ\varphi)^n}\,{\tilde v}\circ\varphi=
\big({\tilde v}\circ\varphi+{\tilde r}\cdot{\tilde v}\circ\varphi\big)/r^n,
\end{equation}
where  ${\tilde r}\in\A^{m_0,p}_{0,N;0}(B^c_R)$ and ${\tilde v}\circ\varphi\in\A^{m_0,p}_{0,N;0}(B^c_R)$
in view of Theorem \ref{th:the_group} and Lemma \ref{lem:asymptotic_spaces_complement} $(a)$. 
This and Lemma \ref{lem:asymptotic_spaces_complement} $(b)$ then 
imply that  $v\circ\varphi\in\A^{m_0,p}_{n,N+n;-n}(B^c_R)$. Arguing as in the last paragraph of the proof of 
Lemma \ref{lem:composition*}, we conclude from Lemma \ref{lem:asymptotic_spaces_complement} $(a)$ that
$v\circ\varphi\in\A^{m_0,p}_{n,N+n;-n}$.
The boundedness of the map $R_\varphi : \A^{m_0,p}_{n,N+n;-n}\to \A^{m_0,p}_{n,N+n;-n}$ follows from 
\eqref{eq:shift}, Lemma \ref{lem:composition*}, Lemma \ref{lem:asymptotic_spaces_complement} $(a)$ and $(b)$,
and Theorem \ref{th:the_group}. Clearly, this map also is invertible with bounded inverse 
$R_{\varphi^{-1}} : \A^{m_0,p}_{n,N+n;-n}\to \A^{m_0,p}_{n,N+n;-n}$.
\end{proof}

\begin{Coro}\label{coro:invariance_2}
For any $\varphi\in\A D^{m,p}_{0,N;0}$, we have
$R_{\varphi}({\widehat\A}^{m,p}_{1,N+1;-1})\subseteq{\widehat\A}^{m,p}_{1,N+1;-1}$, and the map
\[
R_\varphi : {\widehat\A}^{m,p}_{1,N+1;-1}\to {\widehat\A}^{m,p}_{1,N+1;-1}
\]
is an isomorphism.
\end{Coro}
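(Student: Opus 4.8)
The plan is to exploit the direct-sum description of ${\widehat\A}^{m,p}_{1,N+1;-1}$ from \eqref{eq:A-hat_structure} (for $d\ge 3$) and \eqref{eq:A-hat_structure(d=2)}--\eqref{eq:A-1*} (for $d=2$): every element of ${\widehat\A}^{m,p}_{1,N+1;-1}$ is the sum of an element of $\A^{m,p}_{1,N+1;-1}$, finitely many of the generators $u_k^j=\chi(r)e_k^j(\theta)(\log r)^k/r^k$, and, when $d=2$, a real multiple of $\chi(r)\log r$. Since $R_\varphi$ is linear, it suffices to show that it carries each of these pieces back into ${\widehat\A}^{m,p}_{1,N+1;-1}$, and then deduce boundedness and invertibility formally.

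For the summand $\A^{m,p}_{1,N+1;-1}$ this is immediate from Corollary \ref{coro:invariance_1} with $n=1$ and $m_0=m$: $R_\varphi(\A^{m,p}_{1,N+1;-1})\subseteq\A^{m,p}_{1,N+1;-1}\subseteq{\widehat\A}^{m,p}_{1,N+1;-1}$, and this restriction is already an isomorphism. For each generator $u_k^j$, I would recall that $e_k^j$, being the restriction to $\s^{d-1}$ of a homogeneous harmonic polynomial on $\R^d$, is a polynomial in $\theta=x/|x|$, so Lemma \ref{lem:composition*} applies with $a=e_k^j$, with denominator exponent $k$ and log exponent $k$, i.e.\ with ``$\ell=0$'' in the notation of that lemma. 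Choosing $N^-:=N-k$, which satisfies $N^-<N$ precisely because $k\ge 1$, the lemma gives
\[
R_\varphi u_k^j-u_k^j\in\A^{m,p}_{k+1,\,N^-+k+1;\,-1}=\A^{m,p}_{k+1,N+1;-1}\subseteq\A^{m,p}_{1,N+1;-1},
\]
with the usual convention that the middle space equals $W^{m,p}_{\gamma_{N+1}}$ when $k=N+1$. Hence $R_\varphi u_k^j\in u_k^j+\A^{m,p}_{1,N+1;-1}\subseteq{\widehat\A}^{m,p}_{1,N+1;-1}$. Finally, for $d=2$, the last assertion of Lemma \ref{lem:composition*} yields $R_\varphi(\chi(r)\log r)-\chi(r)\log r\in\A^{m,p}_{1,N+1;-1}$, so $R_\varphi(\chi(r)\log r)\in\A^{m,p}_{1^*,N+1;-1}\subseteq{\widehat\A}^{m,p}_{1,N+1;-1}$. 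Summing the contributions gives $R_\varphi({\widehat\A}^{m,p}_{1,N+1;-1})\subseteq{\widehat\A}^{m,p}_{1,N+1;-1}$.

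For boundedness and invertibility I would argue abstractly: ${\widehat\A}^{m,p}_{1,N+1;-1}$ is a topological direct sum of the Banach space $\A^{m,p}_{1,N+1;-1}$ (resp.\ $\A^{m,p}_{1^*,N+1;-1}$ for $d=2$) and a finite-dimensional subspace, on the first of which $R_\varphi$ is bounded by Corollary \ref{coro:invariance_1} and on the second of which it is trivially bounded, whence $R_\varphi$ is bounded on ${\widehat\A}^{m,p}_{1,N+1;-1}$. Since $m>3+\frac{d}{p}>2+\frac{d}{p}$, Corollary \ref{coro:topological_group} gives $\varphi^{-1}\in\A D^{m,p}_{0,N;0}$, and the same reasoning applied to $\varphi^{-1}$ shows $R_{\varphi^{-1}}$ maps ${\widehat\A}^{m,p}_{1,N+1;-1}$ boundedly into itself; as $R_\varphi$ and $R_{\varphi^{-1}}$ are mutually inverse on functions, $R_\varphi$ is an isomorphism. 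Granting Lemma \ref{lem:composition*} and Corollary \ref{coro:invariance_1}, the only step needing genuine care is the index bookkeeping for the generators: the composition remainder of $u_k^j$ naturally lands in $\A^{m,p}_{k+1,N^-+k+1;-1}$, and containing this in $\A^{m,p}_{1,N+1;-1}$ forces $N^-+k+1\ge N+1$, hence $k\ge 1$ — which is exactly the reason the generators are admitted only for $k\ge d-2\ge 1$ when $d\ge 3$ and for $k\ge 1$ when $d=2$. No further obstacle appears.
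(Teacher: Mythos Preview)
Your proof is correct and follows essentially the same approach as the paper: decompose ${\widehat\A}^{m,p}_{1,N+1;-1}$ via the direct sum \eqref{eq:A-hat_structure}/\eqref{eq:A-hat_structure(d=2)}, handle the summand $\A^{m,p}_{1,N+1;-1}$ with Corollary \ref{coro:invariance_1}, handle each generator $u_k^j$ and (for $d=2$) $\chi(r)\log r$ with Lemma \ref{lem:composition*}, and conclude boundedness from the finite-dimensional complement. The paper is slightly less explicit about the index bookkeeping for $N^-$ and about invoking $\varphi^{-1}\in\A D^{m,p}_{0,N;0}$ for invertibility, but the argument is the same.
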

\begin{proof}
Take $\varphi\in\A D^{m,p}_{0,N;0}$ and $v\in{\widehat\A}^{m,p}_{1,N+1;-1}$. Then by 
\eqref{eq:A-hat_structure}, \eqref{eq:A-1*}, and \eqref{eq:A-hat_structure(d=2)}, $v$ can be uniquely decomposed,
\[
v=v_1+v_2,
\]
where $v_1\in\A^{m,p}_{1,N+1;-1}$ and $v_2$ is a linear combination with constant coefficients of 
the functions
\[
u_k^j=\chi(r)\,\frac{e_k^j(\theta)(\log r)^k}{r^k},\quad\max\{1,d-2\}\le k\le N+1,\quad 1\le j\le s_k,
\]
and the function $u_0^1=\chi(r)\log r$ when $d=2$.
As the coefficients $e_k^j(\theta)$ are polynomial functions of $\theta\equiv\frac{x}{|x|}$, we can apply
Lemma \ref{lem:composition*} to conclude that for $\max\{1,d-2\}\le k\le N+1$,
\begin{equation}\label{eq:v_2'}
u_k^j\circ\varphi=u_k^j+{\tilde u}_k^j,\quad {\tilde u}_k^j\in\A^{m,p}_{1,N+1;-1},
\end{equation}
and, for $d=2$,
\begin{equation}\label{eq:v_2''}
u_0^1\circ\varphi=u_0^1+{\tilde u}_0^1,\quad {\tilde u}_0^1\in\A^{m,p}_{1,N+1;-1}\,.
\end{equation}
In view of Corollary \ref{coro:invariance_1}, we have $v_1\circ\varphi\in\A^{m,p}_{1,N+1;-1}$
and the map $v_1\mapsto v_1\circ\varphi$,  $\A^{m,p}_{1,N+1;-1}\to\A^{m,p}_{1,N+1;-1}$,
is bounded. The projection $v\mapsto v_1$ is also bounded. This together with \eqref{eq:v_2'} and 
\eqref{eq:v_2''} then implies the statement of the Corollary.
\end{proof}

\noindent Corollary \ref{coro:invariance_1} and Corollary \ref{coro:invariance_2} then give

\begin{Coro}\label{coro:conjugation_isomorphism}
For any  given $\varphi\in\A D^{m,p}_{0,N;0}$,
\[
\Delta_\varphi : {\widehat\A}^{m,p}_{1,N+1;-1}\to\A^{m-2,p}_{3,N+3;-3}
\]
is an isomorphism with inverse $(\Delta_\varphi)^{-1}=R_\varphi\circ\Delta^{-1}\circ R_{\varphi^{-1}}$,
where $\Delta^{-1} : \A^{m-2,p}_{3,N+3;-3}\to {\widehat\A}^{m,p}_{1,N+1;-1}$ is the inverse of the map 
\eqref{eq:laplace_isomorphism}.
\end{Coro}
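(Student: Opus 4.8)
The plan is to exhibit $\Delta_\varphi$ as a composition of three isomorphisms already available in this section and then read off its inverse. Since the paper assumes $m>3+\frac dp>2+\frac dp$, Corollary \ref{coro:topological_group} tells us that $\A D^{m,p}_{0,N;0}$ is a group, so $\varphi^{-1}\in\A D^{m,p}_{0,N;0}$ as well; in particular every right-translation statement proved above applies to $\varphi^{-1}$ exactly as it does to $\varphi$.

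With this in hand I would write out the chain
\[
{\widehat\A}^{m,p}_{1,N+1;-1}\ \xrightarrow{\ R_{\varphi^{-1}}\ }\ {\widehat\A}^{m,p}_{1,N+1;-1}\ \xrightarrow{\ \Delta\ }\ \A^{m-2,p}_{3,N+3;-3}\ \xrightarrow{\ R_\varphi\ }\ \A^{m-2,p}_{3,N+3;-3},
\]
whose composite is $\Delta_\varphi=R_\varphi\circ\Delta\circ R_{\varphi^{-1}}$. The first arrow is an isomorphism by Corollary \ref{coro:invariance_2} applied to $\varphi^{-1}$; the middle arrow is an isomorphism by Proposition \ref{prop:laplace_operator}; and the last arrow is an isomorphism by Corollary \ref{coro:invariance_1} taken with $n=3$ and $m_0=m-2$ (legitimate, since $m-2>1+\frac dp$ because $m>3+\frac dp$). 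Hence $\Delta_\varphi$ is an isomorphism from ${\widehat\A}^{m,p}_{1,N+1;-1}$ onto $\A^{m-2,p}_{3,N+3;-3}$.

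For the inverse I would note that Corollaries \ref{coro:invariance_1} and \ref{coro:invariance_2} already identify the bounded inverse of $R_\varphi$ (on each of the two relevant spaces) as $R_{\varphi^{-1}}$, and vice versa, simply because $(v\circ\varphi)\circ\varphi^{-1}=v$ and $(u\circ\varphi^{-1})\circ\varphi=u$ pointwise on $\R^d$. Therefore
\[
(\Delta_\varphi)^{-1}=\big(R_\varphi\circ\Delta\circ R_{\varphi^{-1}}\big)^{-1}=R_\varphi\circ\Delta^{-1}\circ R_{\varphi^{-1}},
\]
where now $R_{\varphi^{-1}}:\A^{m-2,p}_{3,N+3;-3}\to\A^{m-2,p}_{3,N+3;-3}$, $\Delta^{-1}:\A^{m-2,p}_{3,N+3;-3}\to{\widehat\A}^{m,p}_{1,N+1;-1}$ is the inverse of \eqref{eq:laplace_isomorphism}, and $R_\varphi:{\widehat\A}^{m,p}_{1,N+1;-1}\to{\widehat\A}^{m,p}_{1,N+1;-1}$; a direct check, cancelling $R_{\varphi^{-1}}\circ R_\varphi=\id$ on the middle space ${\widehat\A}^{m,p}_{1,N+1;-1}$ and $R_\varphi\circ R_{\varphi^{-1}}=\id$ on $\A^{m-2,p}_{3,N+3;-3}$, confirms that this is a two-sided inverse. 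There is no genuine obstacle in the argument; the only thing demanding attention is the bookkeeping, since each of $R_\varphi$ and $R_{\varphi^{-1}}$ occurs as an operator on two different asymptotic spaces, and one must be careful to apply each cancellation identity on the correct domain.
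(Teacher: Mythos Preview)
Your proof is correct and is essentially the same argument the paper has in mind: the corollary is stated immediately after Corollaries \ref{coro:invariance_1} and \ref{coro:invariance_2} precisely because $\Delta_\varphi$ factors as the composition of the three isomorphisms $R_{\varphi^{-1}}$, $\Delta$, and $R_\varphi$ that they (together with Proposition \ref{prop:laplace_operator}) provide. Your explicit check that $m_0=m-2>1+\frac dp$ is exactly the reason the paper imposes $m>3+\frac dp$ (cf.\ Remark \ref{rem:m>3+d/p}).
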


\begin{Rem}\label{rem:m>3+d/p}
Here we use the condition $m>3+\frac{d}{p}$ when applying Corollary \ref{coro:invariance_1}.
\end{Rem}

\section{The Euler vector field}\label{sec:euler_vector_field}
In this section we associate to the Euler equation \eqref{eq:euler} a dynamical system on the tangent bundle
$T\big(\A D^{m,p}_{0,N;0}\big)\equiv \A D^{m,p}_{0,N;0}\times\A^{m,p}_{0,N;0}$ 
of the asymptotic group $\A D^{m,p}_{0,N;0}$. 
The regularity of this dynamical system and its relation to the Euler equation is discussed in 
the following sections.

\medskip

Let us first discuss the notion of solution. 
Take $u_0\in\accentset{\,\,\,\circ}\A^{m,p}_{0,N;0}\subseteq\A^{m,p}_{0,N;0}$ and recall that
the subspace $\accentset{\,\,\,\circ}\A^{m,p}_{0,N;0}$ is closed in $\A^{m,p}_{0,N;0}$. 
We say that
$u\in C^0\big([0,T],\accentset{\,\,\,\circ}\A^{m,p}_{0,N;0}\big)\cap 
C^1\big([0,T],\accentset{\,\,\,\circ}\A^{m-1,p}_{0,N;0}\big)$ 
is a {\em solution} of the Euler equation if $u$ satisfies \eqref{eq:euler} for some 
${\rm p}\in C^0\big([0,T],S'(\R^d)\big)$, where $S'(\R^d)$ is the space of tempered distributions in $\R^d$. 
In this way, the pressure term ${\rm p}\in C^0\big([0,T],S'(\R^d)\big)$ is not considered a part of the solution.

\smallskip

Assume that  $u\in C^0\big([0,T],\accentset{\,\,\,\circ}\A^{m,p}_{0,N;0}\big)\cap 
C^1\big([0,T],\accentset{\,\,\,\circ}\A^{m-1,p}_{0,N;0}\big)$ is 
a solution of the Euler equation \eqref{eq:euler}. 
Applying the divergence $\Div$ to both sides of equation \eqref{eq:euler}, 
we obtain
\begin{equation}\label{eq:poisson}
\Div\big(u\cdot\nabla u\big)=-\Delta{\rm p}\,.
\end{equation}
In view of the Sobolev embedding $\A^{m,p}_{0,N;0}\subseteq C^2$ and the product rule we have,
\begin{eqnarray}
\Div\big(u\cdot\nabla u\big)&=&\tr\big([{\rm d}u]^2\big)+u\cdot\nabla\big(\Div u\big)\label{eq:the_trick1}\\
&=&\tr\big([{\rm d}u]^2\big),\label{eq:the_trick2}
\end{eqnarray}
where $\tr$ is the trace and $[{\rm d}u]^2$ is the square of the Jacobian matrix $[{\rm d}u]$. 
(Relation \eqref{eq:the_trick2} is standard and appears e.g.\ in \cite{BB}; although quite simple,
it is crucial for our approach.) Combining \eqref{eq:poisson} with 
\eqref{eq:the_trick2} we see that $-\Delta\big(\nabla{\rm p}\big)=\nabla\circ Q(u)$ where 
\begin{equation}\label{eq:Q}
Q(u):=\tr\big([{\rm d}u]^2\big)\,. 
\end{equation}
In view of Lemma \ref{lem:properties_log_spaces} $(c)$ and $(d')$,
\[
Q : \A^{m,p}_{0,N;0}\to\A^{m-1,p}_{2,N+2;-2}
\]
and this map is real-analytic. This, together with Proposition \ref{prop:laplace_operator}, then implies that
\begin{equation}\label{eq:the_pressure_term}
-\nabla{\rm p}=\Delta^{-1}\circ\nabla\circ Q(u),
\end{equation}
where $\Delta^{-1} : \A^{m-2,p}_{3,N+3;-3}\to{\widehat\A}^{m,p}_{1,N+1;-1}$ is a bounded map.
(The space ${\widehat\A}^{m,p}_{1,N+1;-1}$ was defined in Section \ref{sec:laplace_operator} -- see
\eqref{eq:A-hat_structure} and \eqref{eq:A-hat_structure(d=2)}.) 
Plugging \eqref{eq:the_pressure_term} into  \eqref{eq:euler} we obtain 
\begin{equation}\label{eq:euler'}
u_t+u\cdot\nabla u=\Delta^{-1}\circ\nabla\circ Q(u),\quad u|_{t=0}=u_0.
\end{equation}
Using that $\Delta^{-1}$ and $\nabla$ commute (Remark \ref{rem:laplace_operator}) we see that we can set
${\rm p}:=-\Delta^{-1}\circ Q(u)$ where $\Delta^{-1} :  \A^{m-1,p}_{2,N+2;-2}\to{\widehat\A}^{m+1,p}_{0,N;0}$ is
a bounded map and the space ${\widehat\A}^{m+1,p}_{0,N;0}$ is defined in Remark \ref{rem:laplace_operator}. 
In particular, we obtain that 
\begin{equation}\label{eq:pressure}
{\rm p}\in C^0\big([0,T],{\widehat\A}^{m+1,p}_{0,N;0}\big).
\end{equation} 
In fact, a converse statement also holds.

\begin{Lem}\label{lem:no_pressure_term}
The curve $u\in C^0\big([0,T],\A^{m,p}_{0,N;0}\big)\cap 
C^1\big([0,T],\A^{m-1,p}_{0,N;0}\big)$ is a solution of equation \eqref{eq:euler'} with 
initial data $u_0\in\accentset{\,\,\,\circ}\A^{m,p}_{0,N;0}$ if and only if
$u$ belongs to $C^0\big([0,T],\accentset{\,\,\,\circ}\A^{m,p}_{0,N;0}\big)\cap 
C^1\big([0,T],\accentset{\,\,\,\circ}\A^{m-1,p}_{0,N;0}\big)$ 
and is a solution of the Euler equation \eqref{eq:euler}. 
\end{Lem}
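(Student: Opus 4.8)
I would prove the two implications separately, treating the pressure--free equation \eqref{eq:euler'} as the primary object.

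\smallskip\noindent\emph{Equation \eqref{eq:euler'} implies equation \eqref{eq:euler}.} Given a solution $u\in C^0\big([0,T],\A^{m,p}_{0,N;0}\big)\cap C^1\big([0,T],\A^{m-1,p}_{0,N;0}\big)$ of \eqref{eq:euler'} with $\Div u_0=0$, first produce the pressure by setting ${\rm p}:=-\Delta^{-1}Q(u)$, with $\Delta^{-1}:\A^{m-1,p}_{2,N+2;-2}\to\widehat\A^{m+1,p}_{0,N;0}$ the isomorphism of Remark \ref{rem:laplace_operator}. Since $\Delta^{-1}$ commutes with $\partial_j$ one gets $-\nabla{\rm p}=\Delta^{-1}\circ\nabla\circ Q(u)$, so \eqref{eq:euler'} reads $u_t+u\cdot\nabla u=-\nabla{\rm p}$, and continuity of $Q:\A^{m,p}_{0,N;0}\to\A^{m-1,p}_{2,N+2;-2}$ gives ${\rm p}\in C^0\big([0,T],\widehat\A^{m+1,p}_{0,N;0}\big)\subseteq C^0\big([0,T],S'(\R^d)\big)$, matching \eqref{eq:pressure}. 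It then remains to check $\Div u\equiv 0$. Put $\rho:=\Div u$, so $\rho|_{t=0}=\Div u_0=0$; applying $\Div$ to \eqref{eq:euler'}, using \eqref{eq:the_trick1} in the form $\Div(u\cdot\nabla u)=Q(u)+u\cdot\nabla\rho$ together with $\Div\big(\Delta^{-1}\circ\nabla\circ Q(u)\big)=\Delta\Delta^{-1}Q(u)=Q(u)$, the two copies of $Q(u)$ cancel and one is left with the linear transport equation $\rho_t+u\cdot\nabla\rho=0$. By Proposition \ref{prop:ode}, $u$ integrates to a flow $\varphi\in C^1\big([0,T],\A D^{m,p}_{0,N;0}\big)$; then $\frac{d}{dt}\big(\rho(t)\circ\varphi(t)\big)=(\rho_t+u\cdot\nabla\rho)\circ\varphi(t)=0$, so $\rho(t)\circ\varphi(t)=\rho(0)=0$, and since $\varphi(t)$ is a diffeomorphism of $\R^d$ we get $\rho(t)\equiv 0$. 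Hence $u$ takes values in $\accentset{\,\,\,\circ}\A^{m,p}_{0,N;0}$ and $\accentset{\,\,\,\circ}\A^{m-1,p}_{0,N;0}$ and solves \eqref{eq:euler}.

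\smallskip\noindent\emph{Equation \eqref{eq:euler} implies equation \eqref{eq:euler'}.} Conversely, let $u\in C^0\big([0,T],\accentset{\,\,\,\circ}\A^{m,p}_{0,N;0}\big)\cap C^1\big([0,T],\accentset{\,\,\,\circ}\A^{m-1,p}_{0,N;0}\big)$ satisfy \eqref{eq:euler} for some ${\rm p}\in C^0\big([0,T],S'(\R^d)\big)$; in particular $u_0=u(0)$ is divergence free. This is the computation \eqref{eq:poisson}--\eqref{eq:euler'} made precise: since $\Div u=0$, identity \eqref{eq:the_trick2} gives $\Div(u\cdot\nabla u)=Q(u)$, so $-\Delta{\rm p}=Q(u)$ in $S'(\R^d)$. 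Writing $\tilde{\rm p}:=-\Delta^{-1}Q(u)\in\widehat\A^{m+1,p}_{0,N;0}$, one has $\Delta({\rm p}-\tilde{\rm p})=0$, so ${\rm p}-\tilde{\rm p}$ is a ($t$-dependent) harmonic polynomial $P$, whence $\nabla{\rm p}=-\Delta^{-1}\circ\nabla\circ Q(u)+\nabla P$. Now $\nabla{\rm p}=-(u_t+u\cdot\nabla u)$ is bounded on $\R^d$ (by Lemma \ref{lem:properties_log_spaces}, $u_t\in\A^{m-1,p}_{0,N;0}$ and $u\cdot\nabla u\in\A^{m-1,p}_{1,N+1;-1}$), and $\Delta^{-1}\circ\nabla\circ Q(u)$ is bounded, so $\nabla P$ is a bounded vector--valued harmonic polynomial, i.e.\ a constant vector $c(t)$. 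Once $c(t)\equiv 0$ is established, \eqref{eq:euler} becomes exactly \eqref{eq:euler'}.

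\smallskip The step I expect to require the most care is showing $c(t)\equiv 0$, i.e.\ that the pressure gradient carries no spatially constant drift. Comparing leading asymptotic terms as $|x|\to\infty$ in $-(u_t+u\cdot\nabla u)=-\Delta^{-1}\circ\nabla\circ Q(u)+c(t)$: both $u\cdot\nabla u$ and $\Delta^{-1}\circ\nabla\circ Q(u)$ vanish at infinity (for $d=2$ the latter uses $M(\nabla\circ Q(u))=0$, cf.\ Proposition \ref{prop:laplace_operator}), so $c(t)=-\lim_{|x|\to\infty}u_t(t,x)=-\partial_t a_0(t,\cdot)$, where $a_0(t,\cdot)$ is the leading coefficient of $u(t)$; in particular this limit is direction independent. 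Concluding $c(t)=0$ then amounts to the assertion that $a_0$ is stationary in $t$, which is exactly the conservation statement of Proposition \ref{prop:integrals}$(a)$ with $k=0$. I expect this to be handled either by reading into the notion of ``solution'' the requirement ${\rm p}\in C^0\big([0,T],\widehat\A^{m+1,p}_{0,N;0}\big)$ anticipated in \eqref{eq:pressure} (this space has no asymptotically constant terms, cf.\ the Remark following Proposition \ref{prop:laplace_operator}, so $\nabla{\rm p}$ decays and $c(t)=0$), or by an independent a priori argument on $\partial_t a_0$; it is the same mechanism that underlies Proposition \ref{prop:integrals}.
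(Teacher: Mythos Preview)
Your proof of the implication ``\eqref{eq:euler'} $\Rightarrow$ \eqref{eq:euler}'' is essentially identical to the paper's: derive the transport equation $(\Div u)_t+u\cdot\nabla(\Div u)=0$, integrate it along the flow $\varphi$ furnished by Proposition~\ref{prop:ode}, and conclude $\Div u\equiv 0$.

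For the converse ``\eqref{eq:euler} $\Rightarrow$ \eqref{eq:euler'}'', the paper simply points to the discussion preceding the lemma, where from $-\Delta(\nabla{\rm p})=\nabla\circ Q(u)$ it passes directly to $-\nabla{\rm p}=\Delta^{-1}\circ\nabla\circ Q(u)$ via Proposition~\ref{prop:laplace_operator}, without addressing the ambiguity you raise. You are right that this step is not automatic: a priori $-\nabla{\rm p}=u_t+u\cdot\nabla u\in\A^{m-1,p}_{0,N;0}$ is merely bounded, so Liouville's theorem leaves a harmonic remainder $c(t)\in\R^d$. Your observation that then $c(t)=-\partial_t a_0(t,\theta)$, and that appealing to Proposition~\ref{prop:integrals} for $\partial_t a_0=0$ would be circular, is correct.

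In fact the obstruction is genuine, not just a gap in the write--up: with ${\rm p}$ allowed to be an arbitrary tempered distribution the implication fails. Take $u_0=0$ and $u(t,x)=t\,c$ for a fixed $c\in\R^d\setminus\{0\}$. Then $u\in C^1\big([0,T],\accentset{\,\,\,\circ}\A^{m,p}_{0,N;0}\big)$, $u_t+u\cdot\nabla u=c=-\nabla(-c\cdot x)$ with $-c\cdot x\in S'(\R^d)$, so \eqref{eq:euler} is satisfied; yet $Q(u)\equiv 0$, so \eqref{eq:euler'} is not. Hence your first proposed resolution --- building ${\rm p}\in C^0\big([0,T],\widehat\A^{m+1,p}_{0,N;0}\big)$ (equivalently, $\nabla{\rm p}\to 0$ at infinity) into the \emph{definition} of solution, as \eqref{eq:pressure} in any case shows is achievable --- is not merely a convenience but is necessary for the lemma, and for the uniqueness claim in Theorem~\ref{th:main}, to hold as stated. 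Under that normalisation $c(t)=0$ is immediate and your argument is complete; your second proposed resolution should be discarded as circular.
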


\begin{Rem}
In particular, the Lemma implies that any solution 
$u\in C^0\big([0,T],\A^{m,p}_{0,N;0}\big)\cap 
C^1\big([0,T],\A^{m-1,p}_{0,N;0}\big)$
of \eqref{eq:euler'} with divergence free initial data $u_0$
consists of divergence free vector fields.
The advantage of \eqref{eq:euler'} compared with \eqref{eq:euler} is that it does not contain
explicitly an unknown pressure term and that the relation $\Div u=0$ for $t\in [0,T]$ is automatically satisfied.
\end{Rem}
 
\noindent{\em Proof of Lemma \ref{lem:no_pressure_term}.}
In view of the discussion above, we only need to prove the direct implication. Let 
$u\in C^0\big([0,T],\A^{m,p}_{0,N;0}\big)\cap 
C^1\big([0,T],\A^{m-1,p}_{0,N;0}\big)$ be a solution of \eqref{eq:euler'} with initial data 
$u_0\in\accentset{\,\,\,\circ}\A^{m,p}_{0,N;0}$. Using again that
$\Delta^{-1}$ and $\nabla$ commute (Remark \ref{rem:laplace_operator}) we conclude from \eqref{eq:euler'}
that $u$ satisfies 
\[
u_t+u\cdot\nabla u=\nabla\circ\Delta^{-1}\circ Q(u)\,.
\]
Applying the divergence $\Div$ to both sides of this equality we get from \eqref{eq:the_trick1} that
\begin{equation}\label{eq:div-evolution}
(\Div u)_t+u\cdot\nabla(\Div u)=0\,.
\end{equation}
In view of the Sobolev embedding $\A^{m-1,p}_{0,N;0}\subseteq C^1$ we have
$u\in C^1\big([0,T]\times\R^d,\R^d\big)$. Then equation (\ref{eq:div-evolution}) implies that
for any $x\in\R^d$ and for any $t\in[0,T]$,
\[
(\Div u)(t,\varphi(t,x))=(\Div u_0)(x)=0,
\]
where $\varphi(t,\cdot)\in\mathop{\rm Diff}^1_+(\R^d)$ is the flow of $u$ (see Proposition \ref{prop:ode}).
This implies that for any $t\in[0,T]$, $u(t)$ is divergence free.
Hence, 
$u\in C^0\big([0,T],\accentset{\,\,\,\circ}\A^{m,p}_{0,N;0}\big)\cap 
C^1\big([0,T],\accentset{\,\,\,\circ}\A^{m-1,p}_{0,N;0}\big)$
as claimed.
\finishproof

\medskip\smallskip

\noindent In this way we have a bijective correspondence between the solutions of the Euler equations
and the solutions of equation \eqref{eq:euler'} with divergence free initial data. 

\smallskip

Now we are ready to define the {\em Euler vector field}.  
For any 
$(\varphi,v)\in T\big(\A D^{m,p}_{0,N;0}\big)\equiv \A D^{m,p}_{0,N;0}\times \A^{m,p}_{0,N;0}$ define
\begin{equation}\label{eq:E}
{\mathcal E}(\varphi,v):=\big(v,{\mathcal E}_2(\varphi,v)\big)\quad\text{where}\quad
{\mathcal E}_2(\varphi,v):=(R_\varphi\circ\Delta^{-1}\circ\nabla\circ Q\circ R_{\varphi^{-1}})(v)\,.
\end{equation}
It follows from Lemma \ref{lem:properties_log_spaces}, Proposition \ref{prop:laplace_operator}, and
Corollary \ref{coro:invariance_1} and Corollary \ref{coro:invariance_2}, that 
\[
{\mathcal E} : \A D^{m,p}_{0,N;0}\times \A^{m,p}_{0,N;0}\to
\A^{m,p}_{0,N;0}\times{\widehat\A}^{m,p}_{1,N+1;-1}
\]
and moreover
\begin{equation}\label{eq:E-factoring}
{\mathcal E}_2(\varphi,v)=(R_\varphi\circ\Delta^{-1}\circ R_{\varphi^{-1}}\big)\circ
\big(R_\varphi\circ\nabla\circ Q\circ R_{\varphi^{-1}}\big)(v)\,.
\end{equation}

\begin{Rem}\label{rem2:m>3+d/p}
Instead of \eqref{eq:E-factoring}, we could write 
\[
{\mathcal E}_2(\varphi,v)=\big(R_\varphi\circ\nabla\circ\Delta^{-1}\circ R_{\varphi^{-1}}\big)
\circ\big(R_\varphi\circ Q\circ R_{\varphi^{-1}}\big)(v)\,.
\] 
In this form, however, the smoothnes of the map
$(\varphi,v)\mapsto\big(R_\varphi\circ\nabla\circ\Delta^{-1}\circ R_{\varphi^{-1}}\big)(v)$,
$\A D^{m,p}_{0,N;0}\times\A_{1,N+1;-1}^{m-1,p}\to\A_{0,N;0}^{m,p}$,
can not be easily deduced, so we have chosen to use 
\eqref{eq:E}, which requires $m>3+d/p$ (cf.\ Remark \ref{rem:m>3+d/p}).
\end{Rem}

Recall from Section \ref{sec:laplace_operator} that for $d=2$ the elements of ${\widehat\A}^{m,p}_{1,N+1;-1}$ 
can have leading asymptotic terms of the form $c\,\chi(r)\log r$ with $c\ne 0$, and hence 
${\widehat\A}^{m,p}_{1,N+1;-1}\not\subseteq\A^{m,n}_{0,N;0}$. 
However, we have
\begin{Lem}\label{lem:no_log_terms_in_E}
For any $(\varphi,v)\in T\big(\A D^{m,p}_{0,N;0}\big)$ the component ${\mathcal E}_2(\varphi,v)$
in \eqref{eq:E} lies in ${\widehat\A}^{m,p}_{1,N+1;-1}\cap\A^{m,p}_{0,N;0}$. In particular,
it does not have a leading asymptotic term of the form $c\,\chi(r)\log r$ with $c\ne 0$.
\end{Lem}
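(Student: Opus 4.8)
The plan is to reduce the claim to a statement about the $d=2$ case only (for $d\ge 3$ every function $u_k^j$ used in the definition of $\widehat\A^{m,p}_{1,N+1;-1}$ already vanishes at infinity, so $\widehat\A^{m,p}_{1,N+1;-1}\subseteq\A^{m,p}_{1,N+1;0}\subseteq\A^{m,p}_{0,N;0}$ by \eqref{eq:A-hat_structure} and \eqref{eqq:A-hat_structure}, and there is nothing to prove). So assume $d=2$. By \eqref{eq:E-factoring}, ${\mathcal E}_2(\varphi,v)=\big(R_\varphi\circ\Delta^{-1}\circ R_{\varphi^{-1}}\big)(g)$ where $g:=\big(R_\varphi\circ\nabla\circ Q\circ R_{\varphi^{-1}}\big)(v)\in\A^{m-2,p}_{3,N+3;-3}$. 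By the second statement of Proposition \ref{prop:laplace_operator}, the coefficient of $\chi(r)\log r$ in $\Delta^{-1}(R_{\varphi^{-1}}g)$ equals $\frac{1}{2\pi}M(R_{\varphi^{-1}}g)$, and by Lemma \ref{lem:composition*} (applied to $\chi(r)\log r$) right-translation by $\varphi$ changes this leading term only by something in $\A^{m,p}_{1,N+1;-1}$. Hence it suffices to show that $M(R_{\varphi^{-1}}g)=\int_{\R^2}\big(\nabla\circ Q\circ R_{\varphi^{-1}}\big)(v)(y)\,{\rm d}y=0$, i.e. that the total integral of the source term $\nabla Q(w)$ over $\R^2$ vanishes, where $w:=R_{\varphi^{-1}}v\in\A^{m,p}_{0,N;0}$.

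The key computation is thus: for $w\in\A^{m,p}_{0,N;0}(\R^2,\R^2)$, show $\int_{\R^2}\partial_j\big(Q(w)\big)\,{\rm d}y=0$ for each $j$. Here $Q(w)=\tr\big([{\rm d}w]^2\big)=\sum_{k,l}(\partial_k w^l)(\partial_l w^k)$, which a priori is only $O(1/r^2)$ at infinity (it lies in $\A^{m-1,p}_{2,N+2;-2}$), so $\partial_j Q(w)$ is $O(1/r^3)$ and the integral converges absolutely. I would integrate $\partial_j Q(w)$ over a large ball $\{|y|\le R\}$ and use the divergence theorem to rewrite it as a boundary integral $\int_{|y|=R}\nu_j\,Q(w)\,{\rm d}s$ over the circle of radius $R$; since $Q(w)=O(1/R^2)$ and the circle has length $2\pi R$, this boundary term is $O(1/R)\to 0$ as $R\to\infty$. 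Hence $\int_{\R^2}\partial_j Q(w)\,{\rm d}y=0$, which gives $M(R_{\varphi^{-1}}g)=0$ and completes the argument. (One may also note that this is the same mechanism as \eqref{eq:mean_zero}.)

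I expect the only genuine subtlety to be bookkeeping about the leading asymptotics: one must be careful that $\nabla Q$ of an $\A^{m,p}_{0,N;0}$ vector field really is integrable at infinity — this is exactly the statement that $Q$ maps into $\A^{m-1,p}_{2,N+2;-2}$ (Lemma \ref{lem:properties_log_spaces} $(c),(d')$, already invoked above \eqref{eq:Q}), so the leading $1/r^2$ behavior of $Q(w)$ has no $1/r$ or constant term and $\partial_j Q(w)=o(1/r^2)$ uniformly on spheres, making the boundary term vanish; and that conjugation by $\varphi$ and $\varphi^{-1}$ (which are in $\A D^{m,p}_{0,N;0}$) does not destroy membership in these spaces, which is Corollaries \ref{coro:invariance_1}, \ref{coro:conjugation_isomorphism} and Theorem \ref{th:the_group}. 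Once these are in place, the vanishing of the mean $M$ is the crux, and everything else is assembling the pieces: ${\mathcal E}_2(\varphi,v)$ has no $\chi(r)\log r$ leading term, so it lies in $\widehat\A^{m,p}_{1,N+1;-1}\cap\A^{m,p}_{0,N;0}$ as claimed.
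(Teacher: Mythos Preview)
Your proposal is correct and follows essentially the same approach as the paper: reduce to $d=2$, set $w=R_{\varphi^{-1}}v$, show $\int_{\R^2}\partial_j Q(w)\,{\rm d}y=0$ via the divergence/Stokes theorem on large balls using $Q(w)=O(1/r^2)$, conclude from Proposition \ref{prop:laplace_operator} that $\Delta^{-1}(\nabla Q(w))$ has no $\chi(r)\log r$ term, and then pass the result through $R_\varphi$. The only cosmetic difference is that the paper works directly with the unfactored expression ${\mathcal E}_2(\varphi,v)=R_\varphi\big(\Delta^{-1}\nabla Q(R_{\varphi^{-1}}v)\big)$ and invokes Corollaries \ref{coro:invariance_1} and \ref{coro:invariance_2} to say $R_\varphi$ preserves $\widehat\A^{m,p}_{1,N+1;-1}\cap\A^{m,p}_{0,N;0}$, whereas you route through the factored form \eqref{eq:E-factoring} and cite Lemma \ref{lem:composition*} for the log term explicitly; these are equivalent.
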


\proof
We only need to consider the case when $d=2$.
Take an arbitrary $(\varphi,v)\in \A D^{m,p}_{0,N;0}\times\A^{m,p}_{0,N;0}$. 
Then $R_{\varphi^{-1}}(v)\in\A^{m,p}_{0,N;0}$ and, by Lemma \ref{lem:properties_log_spaces},
\[
Q\circ R_{\varphi^{-1}}(v)\in\A^{m-1,p}_{2,N+2;-2}\,\,\,\,\,\,\,\text{and}\,\,\,\,\,\,\,\,\,
\nabla\circ Q\circ R_{\varphi^{-1}}(v)\in\A^{m-2,p}_{3,N+3;-3}.
\]
Denote  $f:=Q\circ R_{\varphi^{-1}}(v)$ and note that
$\nabla f\in\A^{m-2,p}_{3,N+3;-3}\subseteq L^1\cap C^1$.
As $f\in\A^{m-1,p}_{2,N+2;-2}$,
\begin{equation}\label{eq:f-decay}
f(x)=O\big(1/|x|^2\big)\,\,\,\,\text{as}\,\,\,\,\,|x|\to\infty.
\end{equation}
Now, consider the first component $\frac{\partial f}{\partial x_1}$ of $\nabla f$.
As ${\rm d}\big(f(x_1,x_2)\,{\rm d}x_2\big)=\frac{\partial f}{\partial x_1}(x_1,x_2)\,{\rm d}x_1\wedge{\rm d}x_2$
we see from the Stokes' theorem and \eqref{eq:f-decay} that for any $R>0$,
\[
\Big|\iint_{\{|x|\le R\}}\frac{\partial f}{\partial x_1}(x_1,x_2)\,{\rm d}x_1\wedge{\rm d}x_2\Big|=
\Big|\oint_{\{|x|=R\}}f(x_1,x_2)\,{\rm d}x_2\Big|\le 2\pi R\max_{\{|x|=R\}}|f|=O(1/R)\,.
\]
This implies that $\int_{\R^2}(\partial f/\partial x_1)\,{\rm d}x=0$. 
Arguing in the same way as above we also conclude that $\int_{\R^2}(\partial f/\partial x_2)\,{\rm d}x=0$.
Hence, in view of the second statement of Proposition \ref{prop:laplace_operator}, 
the element $\Delta^{-1}(\nabla\circ Q\circ R_{\varphi^{-1}}(v))$ does not have a leading asymptotic term
of the form $c\,\chi(r)\log r$ with $c\ne 0$. This implies that 
$\Delta^{-1}(\nabla\circ Q\circ R_{\varphi^{-1}}(v))\in{\widehat\A}^{m,p}_{1,N+1;-1}\cap\A^{m,p}_{0,N;0}$.
As by Corollary \ref{coro:invariance_1} and Corollary \ref{coro:invariance_2}, the right translation
$R_\varphi$ preserves the space ${\widehat\A}^{m,p}_{1,N+1;-1}\cap\A^{m,p}_{0,N;0}$,
we conclude that
${\mathcal E}_2(\varphi,v)\in{\widehat\A}^{m,p}_{1,N+1;-1}\cap\A^{m,p}_{0,N;0}$.
\endproof

\begin{Rem}
The statement of Lemma \ref{lem:no_log_terms_in_E} can be also easily obtained from the commutative diagram
in Remark \ref{rem:laplace_operator}.
\end{Rem}

In view of Lemma \ref{lem:no_log_terms_in_E}, for $1+\frac{d}{p}<m_0\le m$ consider the following
closed subspace in $\A^{m_0,p}_{0,N;0}$,
\begin{equation}\label{eq:A-tilde}
{\widetilde\A}^{m_0,p}_{1,N;-1}:=\A^{m_0,p}_{1,N;-1}\oplus
\mathop{\rm span}_{\R} \big\{u_k^j\,\big|\, 1\leq j\leq s_k, \ \max\{1,d-2\}\le k\le N\big\},
\end{equation}
where the functions $u_k^j$ are defined in \eqref{def:ujk}.  Clearly,
\[
{\widehat\A}^{m_0,p}_{1,N+1;-1}\cap\A^{m_0,p}_{0,N;0}\subseteq{\widetilde\A}^{m_0,p}_{1,N;-1}\,.
\]
In this way we have
\begin{equation}\label{eq:E-map}
{\mathcal E} : \A D^{m,p}_{0,N;0}\times \A^{m,p}_{0,N;0}\to
\A^{m,p}_{0,N;0}\times{\widetilde\A}^{m,p}_{1,N;-1}\subseteq
\A^{m,p}_{0,N;0}\times\A^{m,p}_{0,N;0},
\end{equation}
and hence represents a vector field on 
$T\big(\A D^{m,p}_{0,N;0}\big)\equiv\A D^{m,p}_{0,N;0}\times \A^{m,p}_{0,N;0}$.
In Section \ref{sec:smoothness_of_the _vector_field} we prove that \eqref{eq:E-map} is real-analytic
(see Theorem \ref{th:E-smooth}).

\section{Smoothness of $\Delta_\varphi$ and its inverse}\label{sec:conjugate_laplace_operator}
In this section we study the smoothness of the conjugate Laplace operator \eqref{eq:conjugate_laplacian} and its inverse
(that appears as the first factorization term in formula \eqref{eq:E-factoring}) as a function of
$\varphi\in\A D^{m,p}_{0,N;0}$.
This is used in Section \ref{sec:smoothness_of_the _vector_field} for the proof of 
the smoothness of the Euler vector field. 

\medskip

First, we prove

\begin{Lem}\label{lem:conjugate_laplace_operator1}
For any $(\varphi,v)\in\A D^{m,p}_{0,N;0}\times\A^{m,p}_{1,N+1;0}$,
we have $\Delta_\varphi(v)\in\A^{m-2,p}_{3,N+3;-2}$, and the map
\[
(\varphi,v)\mapsto \Delta_\varphi(v),\quad \A D^{m,p}_{0,N;0}\times\A^{m,p}_{1,N+1;0}\to
\A^{m-2,p}_{3,N+3;-2},
\]
is real-analytic. 
\end{Lem}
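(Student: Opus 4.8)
The strategy is to write $\Delta_\varphi=R_\varphi\circ\Delta\circ R_{\varphi^{-1}}$ and expand the Laplacian in the $\varphi$-pulled-back coordinates, so that $\Delta_\varphi(v)$ becomes a sum of terms of the form (coefficient depending on $\varphi$) times (second-order derivative of $v$). Concretely, if $\varphi=\id+w$ with $w\in\A^{m,p}_{0,N;0}$ and $\psi:=\varphi^{-1}$, then for a function $g=R_{\varphi^{-1}}(v)=v\circ\psi$ one has $v=g\circ\varphi$, and the chain rule gives
\[
\frac{\partial v}{\partial x_i}=\sum_a\Big(\frac{\partial g}{\partial y_a}\circ\varphi\Big)\frac{\partial\varphi^a}{\partial x_i},
\]
so that after a second differentiation and composition with $\varphi^{-1}$ one obtains
\[
\Delta_\varphi(v)=\sum_{a,b} g^{ab}(\varphi)\,\frac{\partial^2 v}{\partial x_a\partial x_b}
+\sum_a h^a(\varphi)\,\frac{\partial v}{\partial x_a},
\]
where $g^{ab}(\varphi)$ and $h^a(\varphi)$ are built polynomially (and rationally, through $1/\det$) out of the first and second partial derivatives of $\varphi$ and $\varphi^{-1}$ — i.e.\ out of $[{\rm d}_x\varphi]$, $[{\rm d}_x\varphi^{-1}]$, and their first derivatives, evaluated appropriately with right translations. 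This is the standard local expression for the pulled-back Laplace(-Beltrami) operator; I would record it as a lemma-internal computation rather than grind the indices.

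\textbf{Key steps.} First, establish the explicit formula above for $\Delta_\varphi(v)$ as a finite sum of products of the coefficient functions $g^{ab}(\varphi), h^a(\varphi)$ with second- and first-order derivatives of $v$. Second, identify the asymptotic classes of each factor. Since $v\in\A^{m,p}_{1,N+1;0}$, Lemma \ref{lem:properties_log_spaces} gives $\partial^2 v\in\A^{m-2,p}_{3,N+3;-2}$ and $\partial v\in\A^{m-1,p}_{2,N+2;-1}$. For the coefficients: writing $\varphi=\id+w$, one checks that $[{\rm d}_x\varphi]=\Id+[{\rm d}_xw]$ with $[{\rm d}_xw]\in\A^{m-1,p}_{1,N+1;-1}$ (and similarly for $\varphi^{-1}=\id+\tilde w$, $\tilde w\in\A^{m,p}_{0,N;0}$ by Theorem \ref{th:the_group}), so every coefficient $g^{ab}(\varphi)$ equals $\delta^{ab}$ plus an element of $\A^{m-1,p}_{1,N+1;-1}$, and $h^a(\varphi)\in\A^{m-2,p}_{2,N+2;-2}$ — here one uses that $1/\det(\Id+[{\rm d}_xw])$ is a convergent power series with coefficients in a Banach algebra, exactly as in the proof of Lemma \ref{lem:composition*}, together with the Banach algebra property of the asymptotic spaces. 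The composition $R_\varphi$ of these with $\varphi$ preserves the relevant classes by Corollary \ref{coro:invariance_1} and Theorem \ref{th:the_group}. Third, multiply out: by the Banach algebra / module properties (Lemma \ref{lem:properties_log_spaces}), the "$\delta^{ab}\cdot\partial^2v$" piece contributes $\Delta v\in\A^{m-2,p}_{3,N+3;-2}$, while each correction term is a product of something in $\A^{m-1,p}_{1,N+1;-1}$ with something in $\A^{m-2,p}_{3,N+3;-2}$, or of something in $\A^{m-2,p}_{2,N+2;-2}$ with something in $\A^{m-1,p}_{2,N+2;-1}$; in both cases the product lands in $\A^{m-2,p}_{3,N+3;-2}$ (the decay/weight indices add and the regularity index is the minimum, $m-2$). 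Hence $\Delta_\varphi(v)\in\A^{m-2,p}_{3,N+3;-2}$ as claimed. Fourth, for real-analyticity: $v\mapsto\partial^2 v,\partial v$ are bounded linear (hence real-analytic) maps; $\varphi\mapsto[{\rm d}_x\varphi],[{\rm d}_x\varphi^{-1}]$ are real-analytic (the latter by the $C^1$-smoothness statement in Theorem \ref{th:the_group} together with a bootstrapping / implicit-function argument, or by the explicit power-series expansions) into the appropriate spaces; the maps $w\mapsto 1/\det(\Id+[{\rm d}_xw])$ and the polynomial combinations are real-analytic because they are given by locally convergent power series with values in a Banach algebra; the composition operators $R_\varphi$ acting on fixed source spaces are real-analytic in $\varphi$ (this is where one invokes the composition results of Theorem \ref{th:the_group} and the analyticity machinery developed in \cite{McOwenTopalov2}); and multiplication in a Banach algebra is bilinear bounded, hence real-analytic. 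Composition and finite sums of real-analytic maps are real-analytic, giving the conclusion.

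\textbf{Main obstacle.} The genuinely delicate point is the real-analyticity of the coefficient maps $\varphi\mapsto g^{ab}(\varphi), h^a(\varphi)$ \emph{as maps into the asymptotic spaces} (not merely into $C^k$), and in particular the analyticity of $\varphi\mapsto\varphi^{-1}$ and of the $\det^{-1}$ factor at the level of the weighted/log-graded norms: one must make sure that composing the power series for $1/\det$ with the asymptotic-space-valued argument $[{\rm d}_xw]$ stays inside the algebra with norm estimates summable in the series index — this is precisely the kind of estimate carried out in the proof of Lemma \ref{lem:composition*} (the Cauchy estimate $|c_k|\le\const(4/3)^k$ against norms tending to $0$ on exterior domains), and the present lemma needs the global-in-$\R^d$ analogue. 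The loss of one derivative ($m\to m-1$ for first-derivative coefficients, $m-2$ overall) must be tracked carefully so that every product lands with regularity exactly $m-2$ and weight exactly $\gamma_{N+3}$ on the remainder; getting the grading $(3,N+3;-2)$ rather than $(3,N+3;-3)$ right — note the log-index is $-2$, reflecting that $v$ itself carries no log terms whereas $\varphi$ does — is the bookkeeping one has to be most careful about. Once these are in place, everything else is an application of the Banach-algebra calculus and the composition lemmas already established.
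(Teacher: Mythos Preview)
Your approach is workable but takes a more laborious route than the paper, and the ``main obstacle'' you correctly identify is precisely what the paper's argument avoids. The paper does \emph{not} expand $\Delta_\varphi$ as a second-order operator with coefficients $g^{ab}(\varphi),h^a(\varphi)$; instead it factors $\Delta=\Div\circ\nabla$ and observes the elementary identity
\[
(R_\varphi\circ\nabla\circ R_{\varphi^{-1}})(v)=[{\rm d}v]\cdot[{\rm d}\varphi]^{-1},
\qquad
(R_\varphi\circ\Div\circ R_{\varphi^{-1}})(v)=\tr\big([{\rm d}v]\cdot[{\rm d}\varphi]^{-1}\big),
\]
so that $\Delta_\varphi$ is obtained by composing two maps of this form (this is isolated as Lemma~\ref{lem:conjugate_nabla}). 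The only $\varphi$-dependent ingredient is the \emph{matrix} inverse $[{\rm d}\varphi]^{-1}=\Id+T(\varphi)$, and $\varphi\mapsto T(\varphi)$, $\A D^{m,p}_{0,N;0}\to\A^{m-1,p}_{1,N+1;-1}$, is real-analytic by a Neumann series (Lemma~\ref{lem:inverse_jacobian}). No analyticity of $\varphi\mapsto\varphi^{-1}$, no analyticity of $R_\varphi$ in $\varphi$, and no second derivatives of $\varphi^{-1}$ are ever needed.

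By contrast, your coefficients $h^a(\varphi)=(\Delta\psi^a)\circ\varphi$ with $\psi=\varphi^{-1}$ involve second derivatives of the inverse map. You can recover these analytically by differentiating $(\partial_a\psi^k)\circ\varphi=([{\rm d}\varphi]^{-1})_{ka}$ once more and solving for $(\partial_a\partial_b\psi^k)\circ\varphi$ in terms of $[{\rm d}\varphi]^{-1}$ and second derivatives of $\varphi$---but you have not done this, and your appeal to ``the $C^1$-smoothness statement in Theorem~\ref{th:the_group} together with a bootstrapping/implicit-function argument'' and to ``the analyticity machinery developed in \cite{McOwenTopalov2}'' does not suffice: Theorem~\ref{th:the_group} gives only continuity or $C^1$-regularity (with loss) of the inversion and composition maps, not real-analyticity, and the paper nowhere establishes real-analyticity of $\varphi\mapsto R_\varphi$ on these spaces. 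So as written there is a gap at exactly the point you flag; the fix is either to carry out the differentiation just described, or---much cleaner---to adopt the $\Div\circ\nabla$ factorization, which reduces everything to the single real-analytic map $\varphi\mapsto[{\rm d}\varphi]^{-1}$.
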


\begin{Rem}\label{rem:real-analytic_structure}
As the asymptotic group $\A D^{m,p}_{0,N;0}$ is identified with an open set in
$\A^{m,p}_{0,N;0}$, it is a {\em real-analytic} manifold modeled on $\A^{m,p}_{0,N;0}$.
This allows us to talk about real-analytic maps on $\A D^{m,p}_{0,N;0}$ and its tangent bundle
(cf. \cite{KLT1}).
\end{Rem}

\begin{proof}
As $m> 3+\frac{d}{p}$, in view of the Sobolev embedding we have $\A^{m,p}_{1,N+1;0}\subseteq C^2$. 
A direct computation shows that for any vector field $v\in C^1$ one has
\begin{equation}\label{eq:conjugate_nabla}
(R_\varphi\circ\nabla\circ R_{\varphi^{-1}})(v)=[{\rm d}v]\cdot[{\rm d}\varphi]^{-1}
\end{equation}
and 
\begin{equation}\label{eq:conjugate_div}
(R_\varphi\circ\Div\circ R_{\varphi^{-1}})(v)=\tr\Big([{\rm d}v]\cdot[{\rm d}\varphi]^{-1}\Big).
\end{equation}
It follows from \eqref{eq:conjugate_nabla}, \eqref{eq:conjugate_div},  
and Lemma \ref{lem:conjugate_nabla} below that
\begin{equation}\label{eq:conjugate_nabla'}
(\varphi,v)\mapsto \big(\varphi, (R_\varphi\circ\nabla\circ R_{\varphi^{-1}})(v)\big)\,,
\quad \A D^{m,p}_{0,N;0}\times\A^{m,p}_{1,N+1;0}\to\A^{m,p}_{0,N;0}\times\A^{m-1,p}_{2,N+2;-1},
\end{equation}
and
\begin{equation}\label{eq:conjugate_div'}
(\varphi,v)\mapsto (R_\varphi\circ\Div\circ R_{\varphi^{-1}})(v)\,,
\quad \A D^{m,p}_{0,N;0}\times\A^{m-1,p}_{2,N+2;-1}\to\A^{m-2,p}_{3,N+3;-2}
\end{equation}
are real-analytic maps. Finally, by composing \eqref{eq:conjugate_nabla'} and \eqref{eq:conjugate_div'} and then,
using $\Delta=\Div\circ\nabla$, we prove the statement of the Lemma.
\end{proof}

\medskip

\noindent In the proof of Lemma \ref{lem:conjugate_laplace_operator1} we used the following

\begin{Lem}\label{lem:conjugate_nabla}
Assume that $1+\frac{d}{p}<m_0\le m$, $n\ge 0$, $l\ge -n$. Then the map,
\begin{equation}\label{eq:conjugate_nabla_map}
(\varphi,v)\mapsto (R_\varphi\circ\nabla\circ R_{\varphi^{-1}})(v),\quad
\A D^{m,p}_{0,N,0}\times\A^{m_0,p}_{n,N+n;\ell}\to\A^{m_0-1,p}_{n+1,N+n+1;\ell-1}\,,
\end{equation}
is real-analytic.
\end{Lem}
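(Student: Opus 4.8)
The plan is to reduce the real-analyticity of the map \eqref{eq:conjugate_nabla_map} to the real-analyticity of the two basic building blocks that appear after using the identity \eqref{eq:conjugate_nabla}, namely $(R_\varphi\circ\nabla\circ R_{\varphi^{-1}})(v)=[{\rm d}v]\cdot[{\rm d}\varphi]^{-1}$. Thus it suffices to prove two things: first, that the map $v\mapsto[{\rm d}v]$, $\A^{m_0,p}_{n,N+n;\ell}\to\A^{m_0-1,p}_{n+1,N+n+1;\ell-1}$ (applied entrywise to the Jacobian matrix) is bounded linear, hence real-analytic; second, that the map $\varphi\mapsto[{\rm d}\varphi]^{-1}$, $\A D^{m,p}_{0,N;0}\to \Id+\A^{m-1,p}_{1,N+1;-1}$ (entrywise) is real-analytic; and third, that entrywise matrix multiplication $\A^{m-1,p}_{1,N+1;-1}\times\A^{m_0-1,p}_{n+1,N+n+1;\ell-1}\to\A^{m_0-1,p}_{n+1,N+n+1;\ell-1}$ is bounded bilinear. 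The first statement follows directly from Lemma \ref{lem:properties_log_spaces}: differentiation maps $\A^{m_0,p}_{n,N+n;\ell}$ boundedly into $\A^{m_0-1,p}_{n+1,N+n+1;\ell-1}$. The third follows from the Banach algebra / multiplication properties of the asymptotic spaces (again Lemma \ref{lem:properties_log_spaces}), noting that multiplication by an element of $\A^{m-1,p}_{1,N+1;-1}$ lowers the decay index by one and hence sends $\A^{m_0-1,p}_{n+1,N+n+1;\ell-1}$ into itself (after absorbing the improved decay); a bounded bilinear map between Banach spaces is automatically real-analytic.

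The remaining and main point is the real-analyticity of $\varphi\mapsto[{\rm d}\varphi]^{-1}$. Writing $\varphi=\id+w$ with $w\in\A^{m,p}_{0,N;0}$, we have $[{\rm d}\varphi]=\Id+[{\rm d}w]$, and by the first step $[{\rm d}w]\in\A^{m-1,p}_{1,N+1;-1}$ entrywise; in particular each entry of $[{\rm d}w]$ decays at infinity, so $\|[{\rm d}w](x)\|\to 0$ as $|x|\to\infty$, and by continuity on the Banach manifold (Theorem \ref{th:the_group}) the matrix $\Id+[{\rm d}w]$ is uniformly invertible. I would then use the Neumann series $[{\rm d}\varphi]^{-1}=\sum_{k\ge 0}(-1)^k[{\rm d}w]^k$. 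The first few terms $k=1,\dots,N+1$ lie in $\A^{m-1,p}_{1,N+1;-1}$ by the Banach algebra property of that space, while the tail $\sum_{k\ge N+2}(-1)^k[{\rm d}w]^k$ has entries decaying like $1/|x|^{N+2}$, i.e. lies in $W^{m-1,p}_{\gamma_{N+1}}\subseteq\A^{m-1,p}_{1,N+1;-1}$; convergence of the tail in the $\A^{m-1,p}_{1,N+1;-1}$-norm follows from a Cauchy-type geometric estimate on a neighbourhood of $\varphi$ in $\A D^{m,p}_{0,N;0}$, exactly as in the proof of Lemma \ref{lem:composition*}. Since each coefficient is a polynomial (in fact monomial) in $w$ through the bounded linear map $w\mapsto[{\rm d}w]$ and bounded multilinear maps, and the series converges uniformly and locally in $\A D^{m,p}_{0,N;0}$, the map $\varphi\mapsto[{\rm d}\varphi]^{-1}$ is real-analytic. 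Alternatively — and perhaps cleaner — I would invoke the fact that matrix inversion $A\mapsto A^{-1}$ is real-analytic on the open set of invertible elements of any Banach algebra, applied to the Banach algebra $\Id+\A^{m-1,p}_{1,N+1;-1}$ of $d\times d$ matrices with entries in the unital algebra $\R\Id\oplus\A^{m-1,p}_{1,N+1;-1}$; one only needs that $[{\rm d}\varphi]$ is invertible in this algebra, which is the content of $\varphi\in\A D^{m,p}_{0,N;0}$ being a diffeomorphism with Jacobian decaying to the identity.

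Finally I would assemble the pieces: $(\varphi,v)\mapsto\big([{\rm d}v],[{\rm d}\varphi]^{-1}\big)$ is real-analytic into $\A^{m_0-1,p}_{n+1,N+n+1;\ell-1}\times\big(\Id+\A^{m-1,p}_{1,N+1;-1}\big)$ (product of real-analytic maps), and composing with the bounded bilinear matrix-multiplication map yields that $(\varphi,v)\mapsto[{\rm d}v]\cdot[{\rm d}\varphi]^{-1}=(R_\varphi\circ\nabla\circ R_{\varphi^{-1}})(v)$ is real-analytic from $\A D^{m,p}_{0,N,0}\times\A^{m_0,p}_{n,N+n;\ell}$ into $\A^{m_0-1,p}_{n+1,N+n+1;\ell-1}$, which is the claim. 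I expect the only genuine obstacle to be bookkeeping: verifying that the target decay/log indices $(n+1,N+n+1;\ell-1)$ are exactly what the multiplication and differentiation lemmas produce, and making sure the neighbourhood on which the Neumann tail converges in the asymptotic norm is handled by the same localization-at-infinity argument (cut-off $\zeta$, restriction to $B_R^c$, then Lemma \ref{lem:asymptotic_spaces_complement}) already used in Lemma \ref{lem:composition*}; the analyticity itself is essentially formal once those embeddings are in place.
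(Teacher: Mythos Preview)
Your proposal is correct and follows essentially the same approach as the paper: write $(R_\varphi\circ\nabla\circ R_{\varphi^{-1}})(v)=[{\rm d}v]\cdot[{\rm d}\varphi]^{-1}=[{\rm d}v]+[{\rm d}v]\cdot T(\varphi)$ with $T(\varphi)=[{\rm d}\varphi]^{-1}-\Id\in\A^{m-1,p}_{1,N+1;-1}$, and then combine the boundedness of $v\mapsto[{\rm d}v]$ (Lemma \ref{lem:properties_log_spaces} $(c)$), the real-analyticity of $\varphi\mapsto T(\varphi)$, and the bounded bilinearity of the product (Lemma \ref{lem:properties_log_spaces} $(d)$ with $N^-=N-1$, followed by the inclusion of $(a)$). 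The only difference is that the paper simply invokes Lemma \ref{lem:inverse_jacobian} for the real-analyticity of $\varphi\mapsto[{\rm d}\varphi]^{-1}-\Id$, whereas you sketch a Neumann-series proof of that lemma; your ``alternative'' Banach-algebra inversion argument is in fact closer to how that lemma is actually established, since the naive Neumann series need not converge globally when $\|[{\rm d}w]\|$ is not small.
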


\begin{proof}
In view of Lemma \ref{lem:inverse_jacobian} in Appendix \ref{sec:appendix_properties} we have 
$[{\rm d}\varphi]^{-1}=\Id+T(\varphi)$ where
$\varphi\mapsto T(\varphi)$, $\A D^{m,p}_{0,N;0}\to\A^{m,p}_{1,N+1;-1}$, is a real-analytic map.
For any $v\in\A^{m_0,p}_{n,N+n;\ell}$ we have that $[{\rm d}v]\in\A^{m_0-1,p}_{n+1,N+n+1;\ell-1}$ by 
Lemma \ref{lem:properties_log_spaces} $(c)$, and
\[
[{\rm d}v]\cdot[{\rm d}\varphi]^{-1}=[{\rm d}v]+[{\rm d}v]\cdot T(\varphi)\,.
\]
In view of  Lemma \ref{lem:properties_log_spaces} $(d)$, 
\[
[{\rm d}v]\cdot T(\varphi)\in\A^{m_0-1,p}_{n+2,N^-+n+2;\ell-2}\subseteq \A^{m_0-1,p}_{n+1,N+n+1;\ell-1}\,,
\]
where we set $N^-=N-1$. Hence, $[{\rm d}v]\cdot[{\rm d}\varphi]^{-1}\in\A^{m-1,p}_{2,N+2;-1}$.
In view of the boundedness of the involved multi-linear maps we see that \eqref{eq:conjugate_nabla_map}
is real analytic. 
\end{proof}

\medskip

\noindent In view of Lemma \ref{lem:conjugate_laplace_operator1} and Corollary \ref{coro:invariance_2}, 
we have

\begin{Coro}\label{coro:conjugate_laplace_operator}
For any $\varphi\in\A D^{m,p}_{0,N;0}$, we have
$\Delta_\varphi({\widehat\A}^{m,p}_{1,N+1;-1})\subseteq\A^{m-2,p}_{3,N+3;-3}$, and the map
\[
(\varphi,v)\mapsto \Delta_\varphi(v),\quad\A D^{m,p}_{0,N;0}\times {\widehat\A}^{m,p}_{1,N+1;-1}\to
\A^{m-2,p}_{3,N+3;-3}
\]
is real-analytic.
\end{Coro}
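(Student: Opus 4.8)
The plan is to derive Corollary~\ref{coro:conjugate_laplace_operator} as a direct combination of Lemma~\ref{lem:conjugate_laplace_operator1} and Corollary~\ref{coro:invariance_2}, together with the observation that $\widehat\A^{m,p}_{1,N+1;-1}$ sits inside the domain space of Lemma~\ref{lem:conjugate_laplace_operator1}. First I would note the inclusion $\widehat\A^{m,p}_{1,N+1;-1}\subseteq\A^{m,p}_{1,N+1;0}$: for $d\ge 3$ this is \eqref{eqq:A-hat_structure} composed with the obvious inclusion $\A^{m,p}_{1,N+1;0}\subseteq\A^{m,p}_{1,N+1;0}$ (here the log-index $\ell=0$ is already in place), and for $d=2$ it follows from \eqref{eqq:A-hat_structure(d=2)} once one observes $\A^{m,p}_{1^*,N+1;0}\subseteq\A^{m,p}_{1,N+1;0}$ fails only because of the extra $\chi(r)\log r$ summand — so more precisely I would use that each generator $u_k^j=\chi(r)e_k^j(\theta)(\log r)^k/r^k$ of $\widehat\A^{m,p}_{1,N+1;-1}$, as well as the function $\chi(r)\log r$ in the $d=2$ case, lies in $\A^{m,p}_{1,N+1;0}$, and hence the whole finite-dimensional span does too. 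Thus $\widehat\A^{m,p}_{1,N+1;-1}$ is a closed subspace of $\A^{m,p}_{1,N+1;0}$.

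Next I would apply Lemma~\ref{lem:conjugate_laplace_operator1}, which gives that $(\varphi,v)\mapsto\Delta_\varphi(v)$ is real-analytic as a map $\A D^{m,p}_{0,N;0}\times\A^{m,p}_{1,N+1;0}\to\A^{m-2,p}_{3,N+3;-2}$. Restricting the second factor to the closed subspace $\widehat\A^{m,p}_{1,N+1;-1}$ preserves real-analyticity, so $(\varphi,v)\mapsto\Delta_\varphi(v)$ is real-analytic as a map into $\A^{m-2,p}_{3,N+3;-2}$. It then remains to improve the target space from $\A^{m-2,p}_{3,N+3;-2}$ to $\A^{m-2,p}_{3,N+3;-3}$, i.e.\ to show that $\Delta_\varphi(v)$ actually has one more order of decay in the leading log-power than Lemma~\ref{lem:conjugate_laplace_operator1} guarantees in general. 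This is exactly what Corollary~\ref{coro:invariance_2} and the definition of $\Delta_\varphi$ provide: writing $\Delta_\varphi=R_\varphi\circ\Delta\circ R_{\varphi^{-1}}$, Corollary~\ref{coro:invariance_2} says $R_{\varphi^{-1}}$ maps $\widehat\A^{m,p}_{1,N+1;-1}$ isomorphically onto itself, then $\Delta$ maps $\widehat\A^{m,p}_{1,N+1;-1}$ into $\A^{m-2,p}_{3,N+3;-3}$ by \eqref{eq:laplace_isomorphism} (Proposition~\ref{prop:laplace_operator}), and finally $R_\varphi$ preserves $\A^{m-2,p}_{3,N+3;-3}$ by Corollary~\ref{coro:invariance_1} with $n=3$ (applied at regularity $m_0=m-2$, noting $m-2>1+\tfrac dp$ since $m>3+\tfrac dp$). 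Hence $\Delta_\varphi(\widehat\A^{m,p}_{1,N+1;-1})\subseteq\A^{m-2,p}_{3,N+3;-3}$.

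Putting these together: the map $(\varphi,v)\mapsto\Delta_\varphi(v)$ takes values in the closed subspace $\A^{m-2,p}_{3,N+3;-3}\subseteq\A^{m-2,p}_{3,N+3;-2}$, and it is real-analytic into the larger space by Lemma~\ref{lem:conjugate_laplace_operator1} restricted as above; since $\A^{m-2,p}_{3,N+3;-3}$ carries the subspace topology and a map into a Banach space that is analytic into an ambient space and lands in a closed subspace is analytic into that subspace, we conclude real-analyticity of $(\varphi,v)\mapsto\Delta_\varphi(v)$, $\A D^{m,p}_{0,N;0}\times\widehat\A^{m,p}_{1,N+1;-1}\to\A^{m-2,p}_{3,N+3;-3}$. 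The only point requiring a little care — and what I would flag as the main (though minor) obstacle — is confirming that $\A^{m-2,p}_{3,N+3;-3}$ is genuinely a \emph{closed} topological subspace of $\A^{m-2,p}_{3,N+3;-2}$ with an equivalent induced norm, so that the abstract "analytic into closed subspace" argument applies; this is immediate from the explicit description of the norms via \eqref{eq:the_norm}-type formulas, since decreasing the allowed log-index just drops some of the coefficient seminorms. Everything else is a formal concatenation of results already established in the excerpt.
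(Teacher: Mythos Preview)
Your argument is essentially the paper's: use the bounded inclusion $\widehat\A^{m,p}_{1,N+1;-1}\hookrightarrow\A^{m,p}_{1,N+1;0}$ to restrict Lemma~\ref{lem:conjugate_laplace_operator1}, then observe that the image actually lands in the closed subspace $\A^{m-2,p}_{3,N+3;-3}\subseteq\A^{m-2,p}_{3,N+3;-2}$ and conclude. Where the paper cites Corollary~\ref{coro:conjugation_isomorphism} for the image statement, you unpack it into Corollaries~\ref{coro:invariance_1},~\ref{coro:invariance_2} and Proposition~\ref{prop:laplace_operator}, which is precisely the content of Corollary~\ref{coro:conjugation_isomorphism}.

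There is, however, a genuine slip in your $d=2$ paragraph. You first (correctly) remark that the summand $\chi(r)\log r$ obstructs $\A^{m,p}_{1^*,N+1;0}\subseteq\A^{m,p}_{1,N+1;0}$, and then in the very next clause assert that $\chi(r)\log r\in\A^{m,p}_{1,N+1;0}$. The latter is false: elements of $\A^{m,p}_{1,N+1;0}$ have leading behaviour $O\!\big((\log r)/r\big)$ at infinity, whereas $\chi(r)\log r$ is unbounded. So for $d=2$ the inclusion $\widehat\A^{m,p}_{1,N+1;-1}\subseteq\A^{m,p}_{1,N+1;0}$ you rely on does not hold as stated. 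The paper's proof asserts the same inclusion without comment, so the oversight is not peculiar to you; it is easily patched (for instance, treat the one-dimensional span of $\chi(r)\log r$ separately via Lemma~\ref{lem:composition*}, or note that the proof of Lemma~\ref{lem:conjugate_laplace_operator1} via Lemma~\ref{lem:conjugate_nabla} goes through on a slightly larger domain containing $\chi(r)\log r$). But as written your $d=2$ step is internally inconsistent and needs this small repair.
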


\begin{proof}
In view of Corollary \ref{coro:conjugation_isomorphism},
\[
(\varphi,v)\mapsto \Delta_\varphi(v),\quad \A D^{m,p}_{0,N;0}\times {\widehat\A}^{m,p}_{1,N+1;-1}\to
\A^{m-2,p}_{3,N+3;-3}\subseteq\A^{m-2,p}_{3,N+3;-2},
\]
where $\A^{m-2,p}_{3,N+3;-3}$ is a closed subspace in $\A^{m-2,p}_{3,N+3;-2}$. 
The statement of the Corollary then follows as, by Lemma \ref{lem:conjugate_laplace_operator1}, the map
\[
(\varphi,v)\mapsto \Delta_\varphi(v),\quad \A D^{m,p}_{0,N;0}\times\A^{m,p}_{1,N+1;0}\to
\A^{m-2,p}_{3,N+3;-2},
\]
is real-analytic and the inclusion ${\widehat A}^{m,p}_{1,N+1;-1}\subseteq\A^{m,p}_{1,N+1;0}$ is bounded.
\end{proof}

Now, consider the map
\begin{equation}\label{eq:D}
{\mathcal D} : \A D^{m,p}_{0,N;0}\times \A^{m-2,p}_{3,N+3;-3}\to{\widehat\A}^{m,p}_{1,N+1;-1},
\quad (\varphi,v)\stackrel{\mathcal D}{\mapsto} (R_\varphi\circ\Delta^{-1}\circ R_{\varphi^{-1}})(v)\,.
\end{equation}
We have
\begin{Prop}\label{prop:conjugate_laplace_operator_inverse}
The map \eqref{eq:D} is real-analytic.
\end{Prop}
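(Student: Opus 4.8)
The plan is to establish real-analyticity of $\mathcal{D}$ by exhibiting it as an explicit composition of real-analytic maps, essentially inverting the strategy already used for $\Delta_\varphi$ in Corollary \ref{coro:conjugate_laplace_operator}. The starting observation is that, by Corollary \ref{coro:conjugation_isomorphism}, for each fixed $\varphi$ the map $\mathcal{D}(\varphi,\cdot)$ is precisely $(\Delta_\varphi)^{-1} : \A^{m-2,p}_{3,N+3;-3}\to{\widehat\A}^{m,p}_{1,N+1;-1}$, and $\Delta_\varphi$ depends real-analytically on $(\varphi,v)$ by Corollary \ref{coro:conjugate_laplace_operator}. So the issue is the real-analyticity of the map $\varphi\mapsto(\Delta_\varphi)^{-1}$ — i.e.\ passing from analyticity of an operator-valued map to analyticity of its (fiberwise) inverse. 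The standard device here is the analytic implicit function theorem / analytic inversion of bounded operators: if $L : \U\to \mathrm{Isom}(X,Y)$ is real-analytic from an open subset of a Banach space into the open set of isomorphisms, then $\varphi\mapsto L(\varphi)^{-1}$ is real-analytic into $\mathrm{Isom}(Y,X)$, since inversion $A\mapsto A^{-1}$ is a real-analytic (indeed, locally given by a Neumann series) map on the open set of invertible operators.

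First I would set $X:={\widehat\A}^{m,p}_{1,N+1;-1}$ and $Y:=\A^{m-2,p}_{3,N+3;-3}$, and consider the map $\varphi\mapsto\Delta_\varphi\in\mathcal{L}(X,Y)$. By Corollary \ref{coro:conjugate_laplace_operator} this map is real-analytic as a map into $\mathcal{L}(X,Y)$ — one gets this from the analyticity of the bilinear evaluation: since $(\varphi,v)\mapsto\Delta_\varphi(v)$ is real-analytic and linear in $v$, the partial data assemble into a real-analytic $\mathcal{L}(X,Y)$-valued map (this is the standard fact that a real-analytic map which is linear in one Banach variable corresponds to a real-analytic operator-valued map; cf.\ the conventions of \cite{KLT1}). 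By Corollary \ref{coro:conjugation_isomorphism}, $\Delta_\varphi$ lands in the open subset $\mathrm{Isom}(X,Y)\subseteq\mathcal{L}(X,Y)$ for every $\varphi\in\A D^{m,p}_{0,N;0}$. Composing with the real-analytic inversion map $\mathrm{Isom}(X,Y)\to\mathrm{Isom}(Y,X)$, $A\mapsto A^{-1}$, we conclude that $\varphi\mapsto(\Delta_\varphi)^{-1}\in\mathcal{L}(Y,X)$ is real-analytic. Finally, composing again with the evaluation pairing $\mathcal{L}(Y,X)\times Y\to X$, $(A,v)\mapsto A(v)$, which is bounded bilinear and hence real-analytic, shows that $(\varphi,v)\mapsto(\Delta_\varphi)^{-1}(v)=\mathcal{D}(\varphi,v)$ is real-analytic, which is the claim.

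There is one point that needs care and which I expect to be the main obstacle: one must know that $\Delta_\varphi$ is genuinely an \emph{isomorphism} $X\to Y$ for every $\varphi$ in the open set, and moreover that the formula $(\Delta_\varphi)^{-1}=R_\varphi\circ\Delta^{-1}\circ R_{\varphi^{-1}}$ given in Corollary \ref{coro:conjugation_isomorphism} is the fiberwise inverse in the \emph{correct} pair of spaces — this is exactly what Corollary \ref{coro:conjugation_isomorphism} provides (using Corollary \ref{coro:invariance_2} that $R_\varphi$ preserves ${\widehat\A}^{m,p}_{1,N+1;-1}$, together with the invariance of the remainder-type spaces under right translation). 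Given that, the operator-theoretic machinery is routine. An alternative, slightly more hands-on route avoiding any operator-valued analyticity statement is to use the explicit factorization $\mathcal{D}=R_\varphi\circ\Delta^{-1}\circ R_{\varphi^{-1}}$ directly: one would need the real-analyticity of $(\varphi,v)\mapsto R_{\varphi^{-1}}(v)$ as a map $\A D^{m,p}_{0,N;0}\times Y\to Y$ (which follows from Theorem \ref{th:the_group}(b) on the smoothness of $\varphi\mapsto\varphi^{-1}$ and the composition map, upgraded to real-analyticity as in \cite{McOwenTopalov2,KLT1}, together with the invariance of $\A^{m-2,p}_{3,N+3;-3}$ under right translation in the spirit of Corollary \ref{coro:invariance_1}), then the boundedness of the fixed linear map $\Delta^{-1} : Y\to X$, and finally the real-analyticity of $(\varphi,w)\mapsto R_\varphi(w)$ as a map $\A D^{m,p}_{0,N;0}\times X\to X$ from Corollary \ref{coro:invariance_2}. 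Composing these three real-analytic maps gives $\mathcal{D}$; the subtle step there is again the right-translation invariance of the asymptotic spaces involved and the requisite regularity loss not being incurred, which is why $m>3+\frac{d}{p}$ is assumed.
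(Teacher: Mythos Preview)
Your proposal is correct and follows essentially the same route as the paper: show that $\varphi\mapsto\Delta_\varphi$ is a real-analytic map into the open set of isomorphisms $\mathcal{L}(X,Y)$ (the paper does this by writing $\Delta_\varphi=(D_2\mathcal{C})(\varphi,0)$ for the map $\mathcal{C}$ of Corollary~\ref{coro:conjugate_laplace_operator}, which is the same ``linear in $v$'' observation you invoke), then compose with the Neumann-series inversion $\mathrm{GL}(X,Y)\to\mathrm{GL}(Y,X)$ and evaluation. Your alternative hands-on factorization route is not needed.
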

\begin{proof}
Denote $E:={\widehat\A}^{m,p}_{1,N+1;-1}$ and $F:=\A^{m-2,p}_{3,N+3;-3}$.
Let ${\rm GL}(E,F)$ be the set of linear isomorphisms $G : E\to F$ inside the Banach space ${\mathcal L}(E,F)$
of bounded linear maps $E\to F$ equipped with the uniform norm. Using Neumann series one easily sees that
${\rm GL}(E,F)$ is an open set in ${\mathcal L}(E,F)$ and that the map
\begin{equation}\label{eq:GL-inverse}
G\to G^{-1},\quad {\rm GL}(E,F)\to{\rm GL}(F,E)
\end{equation}
is real-analytic.
It follows from Corollary \ref{coro:conjugation_isomorphism} that the map
\begin{equation}\label{eq:GL_map}
\varphi\mapsto\Delta_\varphi,\quad \A D^{m,p}_{0,N;0}\to{\rm GL}(E,F)\subseteq{\mathcal L}(E,F)
\end{equation}
is well-defined. Note that $\Delta_\varphi=(D_2{\mathcal C})(\varphi,0)$ where 
${\mathcal C}(\varphi,v):=\Delta_\varphi(v)$ is the map in Corollary \ref{coro:conjugate_laplace_operator}
and $D_2$ denotes the partial derivative of this map with respect to the second argument at the point $(\varphi,0)$. 
This together with Corollary \ref{coro:conjugate_laplace_operator} then implies that
the map \eqref{eq:GL_map} is real-analytic. Composing the map \eqref{eq:GL_map} with
\eqref{eq:GL-inverse} and using that $(\Delta_\varphi)^{-1}=R_\varphi\circ\Delta^{-1}\circ R_{\varphi^{-1}}$
we conclude that the map
\[
\varphi\mapsto R_\varphi\circ\Delta^{-1}\circ R_{\varphi^{-1}},\quad
\A D^{m,p}_{0,N;0}\to{\rm GL}(F,E)\subseteq{\mathcal L}(F,E)
\]
is real-analytic. This completes the proof of the Proposition.
\end{proof}

\section{Smoothness of the Euler vector field}\label{sec:smoothness_of_the _vector_field}
In this section we prove Theorem \ref{th:E-smooth}, which states that the Euler vector field defined in
Section \ref{sec:euler_vector_field} is real-analytic.

\smallskip

It follows from Lemma \ref{lem:properties_log_spaces} $(c)$ and $(d')$ that the map
\[
Q : u\mapsto \tr \big([{\rm d}u]^2\big),\quad\A^{m,p}_{0,N;0}\to\A^{m-1,p}_{2,N+2;-2}
\]
is real-analytic.  Now we prove the following.

\begin{Lem}\label{lem:Q_smooth}
For any $(\varphi,v)\in\A D^{m,p}_{0,N;0}\times\A^{m,p}_{0,N;0}$ we have
$(R_\varphi\circ Q\circ R_{\varphi^{-1}})(v)\in\A^{m-1,p}_{2,N+2;-2}$ and the map
\[
(\varphi,v)\mapsto (R_\varphi\circ Q\circ R_{\varphi^{-1}})(v),\quad
\A D^{m,p}_{0,N;0}\times\A^{m,p}_{0,N;0}\to\A^{m-1,p}_{2,N+2;-2}
\]
is real-analytic.
\end{Lem}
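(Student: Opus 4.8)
The plan is to reduce the statement for $R_\varphi\circ Q\circ R_{\varphi^{-1}}$ to the already-available smoothness of the conjugated gradient from Lemma~\ref{lem:conjugate_nabla}, using the algebraic identity $[\mathrm d(v\circ\varphi^{-1})]\circ\varphi=[\mathrm dv]\cdot[\mathrm d\varphi]^{-1}$ that is recorded in \eqref{eq:conjugate_nabla}. Since $Q(u)=\tr\big([\mathrm du]^2\big)$, conjugating $Q$ by right translation simply replaces the Jacobian by its conjugated version: for any $v\in C^1$ and $\varphi\in\A D^{m,p}_{0,N;0}$ one has
\[
(R_\varphi\circ Q\circ R_{\varphi^{-1}})(v)=\tr\Big(\big([\mathrm dv]\cdot[\mathrm d\varphi]^{-1}\big)^2\Big).
\]
This is the key reduction: it expresses the conjugated $Q$ entirely in terms of $[\mathrm dv]$, the matrix $[\mathrm d\varphi]^{-1}$, and pointwise matrix multiplication and trace, all of which are benign operations on asymptotic spaces.

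First I would verify the identity above by a direct computation (chain rule plus the fact that $\tr(AB)=\tr(BA)$), exactly in the spirit of \eqref{eq:conjugate_nabla}--\eqref{eq:conjugate_div}. Next I would invoke Lemma~\ref{lem:inverse_jacobian} to write $[\mathrm d\varphi]^{-1}=\Id+T(\varphi)$ with $\varphi\mapsto T(\varphi)$, $\A D^{m,p}_{0,N;0}\to\A^{m,p}_{1,N+1;-1}$, real-analytic, and Lemma~\ref{lem:properties_log_spaces}~$(c)$ to get $[\mathrm dv]\in\A^{m-1,p}_{1,N+1;-1}$ for $v\in\A^{m,p}_{0,N;0}$. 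Expanding the product, $[\mathrm dv]\cdot[\mathrm d\varphi]^{-1}=[\mathrm dv]+[\mathrm dv]\cdot T(\varphi)$, and by the multiplication properties in Lemma~\ref{lem:properties_log_spaces} the correction term lands in $\A^{m-1,p}_{2,N^-+2;-2}\subseteq\A^{m-1,p}_{1,N+1;-1}$ (taking $N^-=N-1$ as in the proof of Lemma~\ref{lem:conjugate_nabla}), so $[\mathrm dv]\cdot[\mathrm d\varphi]^{-1}\in\A^{m-1,p}_{1,N+1;-1}$. Squaring this matrix and taking the trace, Lemma~\ref{lem:properties_log_spaces}~$(d')$ gives that each entry of the square lies in $\A^{m-1,p}_{2,N+2;-2}$, hence so does the trace; this establishes the mapping property $(R_\varphi\circ Q\circ R_{\varphi^{-1}})(v)\in\A^{m-1,p}_{2,N+2;-2}$.

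For real-analyticity I would note that the map in question is a composition of the following real-analytic pieces: $(\varphi,v)\mapsto\big(T(\varphi),[\mathrm dv]\big)$, which is real-analytic by Lemma~\ref{lem:inverse_jacobian} and boundedness of $v\mapsto[\mathrm dv]$; the bilinear (hence real-analytic, being bounded multilinear) operations of matrix multiplication $\A^{m,p}_{1,N+1;-1}\times\A^{m-1,p}_{1,N+1;-1}\to\A^{m-1,p}_{1,N+1;-1}$ and $\A^{m-1,p}_{1,N+1;-1}\times\A^{m-1,p}_{1,N+1;-1}\to\A^{m-1,p}_{2,N+2;-2}$; and the bounded linear trace map into $\A^{m-1,p}_{2,N+2;-2}$. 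Composition of real-analytic maps between Banach spaces is real-analytic, which closes the argument. Alternatively, and more economically, I would deduce the result directly by composing the real-analytic map \eqref{eq:conjugate_nabla'} of Lemma~\ref{lem:conjugate_laplace_operator1} (applied with the vector field $v$ in place of a scalar, componentwise) with itself appropriately and then taking the trace, mirroring the way $\Delta=\Div\circ\nabla$ was handled there; this makes the proof essentially a one-line corollary of the machinery already in place.

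The only mildly delicate point — and the one I expect to be the main obstacle, though a minor one — is bookkeeping the asymptotic orders and log-exponents correctly when multiplying elements of $\A^{m-1,p}_{1,N+1;-1}$ to land in $\A^{m-1,p}_{2,N+2;-2}$ rather than something weaker, and in particular checking that the truncation parameter $N^-$ in Lemma~\ref{lem:conjugate_nabla}'s application does not degrade the target space below $\A^{m-1,p}_{2,N+2;-2}$. Since the leading term $[\mathrm dv]\cdot[\mathrm dv]$ already has order exactly $2$ with no log terms (it comes from $v\in\A^{m,p}_{0,N;0}$, whose leading asymptotic term is a constant in $r$), and the cross terms involving $T(\varphi)$ are strictly lower order, the orders match up; one just needs to state this carefully. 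No genuinely new estimate is required beyond what Lemma~\ref{lem:properties_log_spaces} and Lemma~\ref{lem:inverse_jacobian} already provide.
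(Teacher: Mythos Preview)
Your proposal is correct and follows essentially the same route as the paper: you use the identity $(R_\varphi\circ Q\circ R_{\varphi^{-1}})(v)=\tr\big(([\mathrm dv]\cdot[\mathrm d\varphi]^{-1})^2\big)=\tr\big([(R_\varphi\circ\nabla\circ R_{\varphi^{-1}})(v)]^2\big)$, invoke Lemma~\ref{lem:conjugate_nabla} to land the conjugated gradient in $\A^{m-1,p}_{1,N+1;-1}$, and then apply Lemma~\ref{lem:properties_log_spaces}~$(d')$ to the square. The paper does exactly this in two lines; your first pass unnecessarily re-derives the content of Lemma~\ref{lem:conjugate_nabla} via $[\mathrm d\varphi]^{-1}=\Id+T(\varphi)$, but your ``alternative, more economical'' version at the end is precisely the paper's argument, and your worry about $N^-$ is moot since $(d')$ (not $(d)$) applies here with no truncation.
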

\begin{proof}
In view of the Sobolev embedding, we have $\A^{m,p}_{0,N;0}\subseteq C^2$. 
For any vector field $v\in\A^{m,p}_{0,N;0}$ and for any $\varphi\in\A D^{m,p}_{0,N;0}$ we have
\[
(R_\varphi\circ Q\circ R_{\varphi^{-1}})(v)=
R_\varphi\circ\tr\left(\Big[(\nabla\circ R_{\varphi^{-1}})(v)\Big]^2\right)
=\tr\left(\Big[(R_\varphi\circ\nabla\circ R_{\varphi^{-1}})(v)\Big]^2\right)\,.
\]
This, together with Lemma \ref{lem:conjugate_nabla} and Lemma \ref{lem:properties_log_spaces} $(d')$ in
Appendix \ref{sec:appendix_properties}, then completes the proof of the Lemma.
\end{proof}

\noindent

Now, consider the map
\begin{equation}\label{eq:B}
{\mathcal B} : \A D^{m,p}_{0,N;0}\times\A^{m,p}_{0,N;0}\to\A D^{m,p}_{0,N;0}\times\A^{m-2,p}_{3,N+3;-3},
\quad (\varphi,v)\stackrel{\mathcal B}{\mapsto}\big(\varphi,(R_\varphi\circ\nabla\circ Q\circ R_{\varphi^{-1}})(v)\big),
\end{equation}
and note that, by Corollary \ref{coro:invariance_1}, 
$(R_\varphi\circ\nabla\circ Q\circ R_{\varphi^{-1}})(v)=
(R_\varphi\circ\nabla\circ R_{\varphi^{-1}})\circ(R_{\varphi}\circ Q\circ R_{\varphi^{-1}})(v)$.
It follows from Lemma \ref{lem:conjugate_nabla} that the map
\[
(\varphi,v)\mapsto (R_\varphi\circ\nabla\circ R_{\varphi^{-1}})(v),\quad
\A D^{m,p}_{0,N;0}\times\A^{m-1,p}_{2,N+2;-2}\to\A^{m-2,p}_{3,N+3;-3}
\]
is real-analytic. Combining this with Lemma \ref{lem:Q_smooth} we get
\begin{Prop}\label{prop:B-smooth}
The map \eqref{eq:B} is real-analytic.
\end{Prop}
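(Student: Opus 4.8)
The plan is to realize $\mathcal{B}$ as a composition of two maps whose real-analyticity is already established, so that the proof reduces to a bookkeeping of the asymptotic exponents. First I would record the factorization displayed just above the statement: for every $(\varphi,v)$, inserting $\id=R_{\varphi^{-1}}\circ R_\varphi$ between $\nabla$ and $Q$ gives
\[
(R_\varphi\circ\nabla\circ Q\circ R_{\varphi^{-1}})(v)=\big(R_\varphi\circ\nabla\circ R_{\varphi^{-1}}\big)\Big(\big(R_\varphi\circ Q\circ R_{\varphi^{-1}}\big)(v)\Big).
\]
The point that needs a word of justification is that this is an identity of \emph{maps between the relevant Banach spaces}, not merely a pointwise identity of functions. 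By Lemma \ref{lem:Q_smooth} the inner term $w:=(R_\varphi\circ Q\circ R_{\varphi^{-1}})(v)$ lies in $\A^{m-1,p}_{2,N+2;-2}$, and Corollary \ref{coro:invariance_1}, applied with $n=2$ and $m_0=m-1$, tells us that $R_\varphi$ restricts to an isomorphism of $\A^{m-1,p}_{2,N+2;-2}$ whose inverse is $R_{\varphi^{-1}}$; hence $R_{\varphi^{-1}}\circ R_\varphi$ is the identity of $\A^{m-1,p}_{2,N+2;-2}$, which legitimizes the splitting.

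Next I would invoke the two smoothness lemmas with matched indices. By Lemma \ref{lem:Q_smooth} the map
\[
g:(\varphi,v)\mapsto (R_\varphi\circ Q\circ R_{\varphi^{-1}})(v),\qquad
\A D^{m,p}_{0,N;0}\times\A^{m,p}_{0,N;0}\to\A^{m-1,p}_{2,N+2;-2},
\]
is real-analytic, and by Lemma \ref{lem:conjugate_nabla}, taken with $m_0=m-1$ (which satisfies $m_0>1+\frac{d}{p}$ since $m>3+\frac{d}{p}$), $n=2$, and $\ell=-2$ (so $\ell\ge -n$), the map
\[
h:(\varphi,w)\mapsto (R_\varphi\circ\nabla\circ R_{\varphi^{-1}})(w),\qquad
\A D^{m,p}_{0,N;0}\times\A^{m-1,p}_{2,N+2;-2}\to\A^{m-2,p}_{3,N+3;-3},
\]
is real-analytic. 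The target of $g$ coincides with the second factor in the domain of $h$, which is exactly the point of choosing these indices.

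Finally I would assemble the pieces. The map $(\varphi,v)\mapsto\big(\varphi,g(\varphi,v)\big)$ is real-analytic because each of its two components is (the projection onto $\A D^{m,p}_{0,N;0}$, and $g$); composing it with $h$ produces, by the factorization above, the second component of $\mathcal{B}$, while the first component of $\mathcal{B}$ is the identity of the real-analytic manifold $\A D^{m,p}_{0,N;0}$. Since a map into a product of Banach manifolds is real-analytic if and only if each component is, and compositions of real-analytic maps are real-analytic, it follows that $\mathcal{B}$ is real-analytic. I do not expect a genuine obstacle here: the only non-formal ingredient is the operator identity underlying the factorization, and that is supplied by Corollary \ref{coro:invariance_1}; everything else is the routine calculus of real-analytic maps between open subsets of Banach spaces together with the matching of the exponents $(n,\ell)$ and the two-derivative loss so that the intermediate space $\A^{m-1,p}_{2,N+2;-2}$ is common to the range of $g$ and the domain of $h$.
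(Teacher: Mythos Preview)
Your proof is correct and follows essentially the same approach as the paper: factor the second component of $\mathcal{B}$ as $(R_\varphi\circ\nabla\circ R_{\varphi^{-1}})\circ(R_\varphi\circ Q\circ R_{\varphi^{-1}})$ via Corollary \ref{coro:invariance_1}, apply Lemma \ref{lem:Q_smooth} to the inner factor and Lemma \ref{lem:conjugate_nabla} (with $m_0=m-1$, $n=2$, $\ell=-2$) to the outer one, and compose. The only difference is that you spell out the routine index-matching and the justification of the operator identity more explicitly than the paper does.
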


\noindent By \eqref{eq:E-factoring} the Euler vector field \eqref{eq:E} defined 
in Section \ref{sec:euler_vector_field} is decomposed as
\begin{equation*}
{\mathcal E}(\varphi,v)=\big(v,({\mathcal D}\circ{\mathcal B})(\varphi,v)\big)\,.
\end{equation*}
This together with Proposition \ref{prop:conjugate_laplace_operator_inverse}, 
Proposition \ref{prop:B-smooth}, and Lemma \ref{lem:no_log_terms_in_E}, then implies

\begin{Coro}\label{coro:E-smooth}
The map
\[
{\mathcal E} : \A D^{m,p}_{0,N;0}\times \A^{m,p}_{0,N;0}\to\A^{m,p}_{0,N;0}\times
{\widetilde\A}^{m,p}_{1,N;-1}
\]
is real-analytic. (We refer to \eqref{eq:A-tilde} for the definition of ${\widetilde\A}^{m,p}_{1,N;-1}$.)
\end{Coro}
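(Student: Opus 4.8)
The plan is to assemble the claimed real-analyticity from the factorization \eqref{eq:E-factoring} of the Euler vector field together with the building-block results proved in Sections \ref{sec:conjugate_laplace_operator} and \ref{sec:smoothness_of_the _vector_field}. Recall that ${\mathcal E}(\varphi,v)=\big(v,{\mathcal E}_2(\varphi,v)\big)$ and, by \eqref{eq:E-factoring},
\[
{\mathcal E}_2(\varphi,v)=\big(R_\varphi\circ\Delta^{-1}\circ R_{\varphi^{-1}}\big)\circ\big(R_\varphi\circ\nabla\circ Q\circ R_{\varphi^{-1}}\big)(v),
\]
which is precisely the composition ${\mathcal D}\circ{\mathcal B}$ with ${\mathcal B}$ as in \eqref{eq:B} and ${\mathcal D}$ as in \eqref{eq:D}. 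So the first component $v\mapsto v$ is linear (hence real-analytic), and it remains to handle ${\mathcal E}_2$.

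First I would invoke Proposition \ref{prop:B-smooth}, which gives that ${\mathcal B}:\A D^{m,p}_{0,N;0}\times\A^{m,p}_{0,N;0}\to\A D^{m,p}_{0,N;0}\times\A^{m-2,p}_{3,N+3;-3}$ is real-analytic; this itself rests on Lemma \ref{lem:Q_smooth} (real-analyticity of the conjugated $Q$ into $\A^{m-1,p}_{2,N+2;-2}$) and Lemma \ref{lem:conjugate_nabla} (real-analyticity of the conjugated gradient, lowering the decay index by one and the $\log$-index by one). Then I would invoke Proposition \ref{prop:conjugate_laplace_operator_inverse}, which gives that ${\mathcal D}:\A D^{m,p}_{0,N;0}\times\A^{m-2,p}_{3,N+3;-3}\to{\widehat\A}^{m,p}_{1,N+1;-1}$ is real-analytic; here the key input was Corollary \ref{coro:conjugation_isomorphism} (that $\Delta_\varphi$ is an isomorphism onto $\A^{m-2,p}_{3,N+3;-3}$, so its inverse is $R_\varphi\circ\Delta^{-1}\circ R_{\varphi^{-1}}$) combined with real-analyticity of operator inversion $G\mapsto G^{-1}$ on the open set of isomorphisms, as in \eqref{eq:GL-inverse}--\eqref{eq:GL_map}. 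Composing two real-analytic maps between open subsets of Banach spaces yields a real-analytic map, so $(\varphi,v)\mapsto({\mathcal D}\circ{\mathcal B})(\varphi,v)$ is real-analytic into ${\widehat\A}^{m,p}_{1,N+1;-1}$.

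The last step is to improve the target space from ${\widehat\A}^{m,p}_{1,N+1;-1}$ to ${\widetilde\A}^{m,p}_{1,N;-1}$. By Lemma \ref{lem:no_log_terms_in_E}, for every $(\varphi,v)$ the element ${\mathcal E}_2(\varphi,v)$ actually lies in ${\widehat\A}^{m,p}_{1,N+1;-1}\cap\A^{m,p}_{0,N;0}$, and by the inclusion noted just before \eqref{eq:E-map} this intersection sits inside the closed subspace ${\widetilde\A}^{m,p}_{1,N;-1}$ of $\A^{m,p}_{0,N;0}$; on a closed subspace the analyticity is automatically inherited (a map into a Banach space that takes values in a closed subspace and is analytic as a map into the ambient space is analytic into the subspace). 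Putting the two components together then gives that ${\mathcal E}:\A D^{m,p}_{0,N;0}\times\A^{m,p}_{0,N;0}\to\A^{m,p}_{0,N;0}\times{\widetilde\A}^{m,p}_{1,N;-1}$ is real-analytic, which is the assertion. The only genuinely delicate points are already encapsulated in the cited statements — in particular the need to route the gradient \emph{inside} $\Delta^{-1}$ (as in \eqref{eq:E-factoring} rather than the alternative of Remark \ref{rem2:m>3+d/p}), which is what forces $m>3+\tfrac{d}{p}$ via Corollary \ref{coro:invariance_1}; at the level of this corollary there is no further obstacle, the argument being a routine composition of previously established real-analytic maps.
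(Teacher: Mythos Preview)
Your proposal is correct and follows essentially the same approach as the paper: the paper simply notes that the decomposition ${\mathcal E}(\varphi,v)=\big(v,({\mathcal D}\circ{\mathcal B})(\varphi,v)\big)$ together with Proposition \ref{prop:conjugate_laplace_operator_inverse}, Proposition \ref{prop:B-smooth}, and Lemma \ref{lem:no_log_terms_in_E} yields the corollary, and you have spelled out exactly this argument. One minor point: in your last step you invoke the ``closed subspace'' principle with ${\widetilde\A}^{m,p}_{1,N;-1}$ viewed as closed in $\A^{m,p}_{0,N;0}$, but the analyticity you established is into ${\widehat\A}^{m,p}_{1,N+1;-1}$; to close this you should first pass through the bounded inclusion ${\widehat\A}^{m,p}_{1,N+1;-1}\cap\A^{m,p}_{0,N;0}\hookrightarrow\A^{m,p}_{0,N;0}$ (for $d=2$ projecting out the $\chi(r)\log r$ component, which is a bounded linear map), after which your closed-subspace argument applies verbatim.
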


\noindent In view of the boundedness of ${\widetilde\A}^{m,p}_{1,N;-1}\subseteq\A^{m,p}_{0,N;0}$ we finally
have

\begin{Th}\label{th:E-smooth} The Euler vector field
\[
{\mathcal E} : \A D^{m,p}_{0,N;0}\times \A^{m,p}_{0,N;0}\to\A^{m,p}_{0,N;0}\times 
\A^{m,p}_{0,N;0}
\]
is real-analytic.
\end{Th}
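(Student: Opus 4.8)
The plan is to assemble Theorem~\ref{th:E-smooth} from the pieces already in place, using only the factorization \eqref{eq:E-factoring} together with the real-analyticity results proved earlier in this section and in Section~\ref{sec:conjugate_laplace_operator}. First I would recall that, by \eqref{eq:E-factoring}, the nontrivial component of the Euler vector field splits as
\[
{\mathcal E}_2(\varphi,v)=\big(R_\varphi\circ\Delta^{-1}\circ R_{\varphi^{-1}}\big)\circ
\big(R_\varphi\circ\nabla\circ Q\circ R_{\varphi^{-1}}\big)(v)
=({\mathcal D}\circ{\mathcal B})(\varphi,v),
\]
so that ${\mathcal E}(\varphi,v)=\big(v,({\mathcal D}\circ{\mathcal B})(\varphi,v)\big)$, where ${\mathcal B}$ is the map \eqref{eq:B} and ${\mathcal D}$ is the map \eqref{eq:D}. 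The point of this particular grouping (as opposed to the alternative in Remark~\ref{rem2:m>3+d/p}) is that each factor lands in a space where its analyticity has been established: ${\mathcal B}$ is real-analytic by Proposition~\ref{prop:B-smooth}, and ${\mathcal D}$ is real-analytic by Proposition~\ref{prop:conjugate_laplace_operator_inverse}.

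The second step is to check that the composition ${\mathcal D}\circ{\mathcal B}$ is well-typed: ${\mathcal B}$ maps $\A D^{m,p}_{0,N;0}\times\A^{m,p}_{0,N;0}$ into $\A D^{m,p}_{0,N;0}\times\A^{m-2,p}_{3,N+3;-3}$, and ${\mathcal D}$ accepts exactly $\A D^{m,p}_{0,N;0}\times\A^{m-2,p}_{3,N+3;-3}$ and returns an element of ${\widehat\A}^{m,p}_{1,N+1;-1}$. Since the composition of real-analytic maps between open subsets of Banach spaces is real-analytic, ${\mathcal E}$ is real-analytic as a map into $\A^{m,p}_{0,N;0}\times{\widehat\A}^{m,p}_{1,N+1;-1}$. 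Next I would invoke Lemma~\ref{lem:no_log_terms_in_E} to refine the target: it shows ${\mathcal E}_2(\varphi,v)$ always lies in ${\widehat\A}^{m,p}_{1,N+1;-1}\cap\A^{m,p}_{0,N;0}\subseteq{\widetilde\A}^{m,p}_{1,N;-1}$, and since ${\widetilde\A}^{m,p}_{1,N;-1}$ is a closed subspace of ${\widehat\A}^{m,p}_{1,N+1;-1}$ (indeed of $\A^{m,p}_{0,N;0}$), the corestriction of a real-analytic map to this closed subspace is again real-analytic. This is exactly the content of Corollary~\ref{coro:E-smooth}, so the theorem will follow by simply observing that the inclusion ${\widetilde\A}^{m,p}_{1,N;-1}\hookrightarrow\A^{m,p}_{0,N;0}$ is bounded linear, hence real-analytic, and composing.

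Honestly, for this particular statement there is no real obstacle left to overcome: all the analytic heavy lifting — the smoothness of $R_\varphi\circ\nabla\circ R_{\varphi^{-1}}$ in Lemma~\ref{lem:conjugate_nabla}, the smoothness of $R_\varphi\circ Q\circ R_{\varphi^{-1}}$ in Lemma~\ref{lem:Q_smooth}, and above all the analyticity of $\varphi\mapsto(\Delta_\varphi)^{-1}$ obtained in Proposition~\ref{prop:conjugate_laplace_operator_inverse} via the analyticity of matrix inversion on ${\rm GL}(E,F)$ — has already been done. The only things to be careful about are bookkeeping: verifying that the intermediate regularity/weight indices line up across the compositions (this is where the hypothesis $m>3+\frac dp$ is used, cf.\ Remark~\ref{rem:m>3+d/p} and Remark~\ref{rem2:m>3+d/p}), and making sure the $d=2$ subtlety about logarithmic leading terms in ${\widehat\A}^{m,p}_{1,N+1;-1}$ is handled — but that is precisely why Lemma~\ref{lem:no_log_terms_in_E} was proved first, so it suffices to cite it. Thus the proof is essentially a three-line assembly argument.

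\medskip

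\noindent{\em Proof of Theorem \ref{th:E-smooth}.}
By \eqref{eq:E-factoring} and the definitions of ${\mathcal B}$ in \eqref{eq:B} and ${\mathcal D}$ in \eqref{eq:D}, we have
\[
{\mathcal E}(\varphi,v)=\big(v,({\mathcal D}\circ{\mathcal B})(\varphi,v)\big)\,.
\]
By Proposition \ref{prop:B-smooth}, the map ${\mathcal B} : \A D^{m,p}_{0,N;0}\times\A^{m,p}_{0,N;0}\to\A D^{m,p}_{0,N;0}\times\A^{m-2,p}_{3,N+3;-3}$ is real-analytic, and by Proposition \ref{prop:conjugate_laplace_operator_inverse}, the map ${\mathcal D} : \A D^{m,p}_{0,N;0}\times\A^{m-2,p}_{3,N+3;-3}\to{\widehat\A}^{m,p}_{1,N+1;-1}$ is real-analytic. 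Since the composition of real-analytic maps between open subsets of Banach spaces is real-analytic, the map
\[
{\mathcal E} : \A D^{m,p}_{0,N;0}\times \A^{m,p}_{0,N;0}\to\A^{m,p}_{0,N;0}\times{\widehat\A}^{m,p}_{1,N+1;-1}
\]
is real-analytic. By Lemma \ref{lem:no_log_terms_in_E}, for every $(\varphi,v)$ the component ${\mathcal E}_2(\varphi,v)$ lies in ${\widehat\A}^{m,p}_{1,N+1;-1}\cap\A^{m,p}_{0,N;0}\subseteq{\widetilde\A}^{m,p}_{1,N;-1}$, which is a closed subspace of ${\widehat\A}^{m,p}_{1,N+1;-1}$; hence the corestriction
\[
{\mathcal E} : \A D^{m,p}_{0,N;0}\times \A^{m,p}_{0,N;0}\to\A^{m,p}_{0,N;0}\times{\widetilde\A}^{m,p}_{1,N;-1}
\]
is real-analytic, which is Corollary \ref{coro:E-smooth}. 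Finally, the inclusion ${\widetilde\A}^{m,p}_{1,N;-1}\subseteq\A^{m,p}_{0,N;0}$ is bounded and linear, hence real-analytic; composing with it completes the proof.
\finishproof
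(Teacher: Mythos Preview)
Your proof is correct and follows exactly the paper's approach: factor ${\mathcal E}_2={\mathcal D}\circ{\mathcal B}$ via \eqref{eq:E-factoring}, invoke Propositions~\ref{prop:B-smooth} and~\ref{prop:conjugate_laplace_operator_inverse} for real-analyticity of each factor, use Lemma~\ref{lem:no_log_terms_in_E} to refine the target to ${\widetilde\A}^{m,p}_{1,N;-1}$ (this is Corollary~\ref{coro:E-smooth}), and finish with the bounded inclusion ${\widetilde\A}^{m,p}_{1,N;-1}\subseteq\A^{m,p}_{0,N;0}$.

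One minor correction in your corestriction step: ${\widetilde\A}^{m,p}_{1,N;-1}$ is \emph{not} a closed subspace of ${\widehat\A}^{m,p}_{1,N+1;-1}$ --- the remainder space $W^{m,p}_{\gamma_N}$ of the former is strictly larger than the remainder space $W^{m,p}_{\gamma_{N+1}}$ of the latter, so the inclusion goes the wrong way. What \emph{is} closed in ${\widehat\A}^{m,p}_{1,N+1;-1}$ is the intersection ${\widehat\A}^{m,p}_{1,N+1;-1}\cap\A^{m,p}_{0,N;0}$ (for $d\ge 3$ this is all of ${\widehat\A}^{m,p}_{1,N+1;-1}$ by \eqref{eqq:A-hat_structure}; for $d=2$ it is the finite-codimension kernel of the $\chi(r)\log r$ coefficient), and the bounded linear inclusion of that intersection into ${\widetilde\A}^{m,p}_{1,N;-1}$, and thence into $\A^{m,p}_{0,N;0}$, completes the argument exactly as you intend.
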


\section{Proof of Theorem \ref{th:main} and Proposition \ref{prop:integrals}}\label{sec:proof_main_theorem}
Assume that $u\in C^0\big([0,T],\A^{m,p}_{0,N;0}\big)\cap C^1\big([0,T],\A^{m-1,p}_{0,N;0}\big)$
is a solution of equation \eqref{eq:euler'}. Then, in view of Proposition \ref{prop:ode},
there exists a unique solution $\varphi\in C^1\big([0,T],\A D^{m,p}_{0,N;0}\big)$, of the 
equation $\dt\varphi=u\circ\varphi$, $\varphi|_{t=0}=\id$. Denote $v:=\dt\varphi$. Then, 
\begin{equation}\label{eq:the_integral_curve}
(\varphi,v)\in C^0\big([0,T], T(\A D^{m,p}_{0,N;0})\big)\,.
\end{equation}
We will prove that, in fact, this curve is $C^1$-smooth and that it is the integral curve of the
Euler vector field $\mathcal E$ on $T(\A D^{m,p}_{0,N;0})$: In view of the Sobolev embedding
$\A^{m-1,p}_{0,N;0}\subseteq C^1$ and $u\in C^1\big([0,T],{\A}^{m-1,p}_{0,N;0}\big)$ 
we conclude that $u,\,\varphi\in C^1\big([0,T]\times\R^d,\R^d\big)$\,.
Since $v(t,x)=u(t,\varphi(t,x))$, we have 
\[
v_t=u_t\circ\varphi+[{\rm d}u]\circ\varphi\cdot{\dt\varphi}=\big(u_t+u\cdot\nabla u\big)\circ\varphi,
\]
where $v_t$ stands for the point-wise partial derivative $v_t(t,x)$. In view of \eqref{eq:euler'} and
\eqref{eq:E}, we then obtain
\[
v_t=\big(R_\varphi\circ\Delta^{-1}\circ\nabla\circ Q\circ R_{\varphi^{-1}}\big)(v)=
{\mathcal E}_2(\varphi,v)\,.
\]
By a point-wise integration with respect to $t$ we then have
\begin{equation}\label{eq:integral_formula}
v(t,x)=u_0(x)+\int_0^t{\mathcal E}_2(\varphi,v)|_{(s,x)}\,{\rm d}s\,.
\end{equation}
Now, it follows from \eqref{eq:the_integral_curve} and Theorem \ref{th:E-smooth} that
the curve $s\mapsto{\mathcal E}_2(\varphi(s),v(s))$, $[0,T]\to\A^{m,p}_{0,N;0}$, is continuous.
Hence, the integral in \eqref{eq:integral_formula} converges in $\A^{m,p}_{0,N;0}$
and $v(t)=u_0+\int_0^t{\mathcal E}_2(\varphi(s),v(s))\,{\rm d}s$ holds in $\A^{m,p}_{0,N;0}$.
This implies that $v\in C^1\big([0,T],\A^{m,p}_{0,N;0}\big)$ and ${\dt v}={\mathcal E}_2(\varphi,v)$.
Hence, $(\varphi,v)\in C^1\big([0,T], T(\A D^{m,p}_{0,N;0})\big)$ is the integral curve of the
dynamical system,
\begin{equation}\label{eq:the_dynamical_system}
\left\{
\begin{array}{l}
(\dt\varphi,\dt v)=\big(v,{\mathcal E}_2(\varphi,v)\big)\equiv{\mathcal E}(\varphi,v),\\
(\varphi,v)|_{t=0}=(\id,u_0),\,\,\,u_0\in\A^{m,p}_{0,N;0}\,.
\end{array}
\right.
\end{equation}
Conversely, one easily confirms, by using Lemma \ref{lem:no_pressure_term} and the fact that 
the solutions $(\varphi,v)$ of \eqref{eq:the_dynamical_system} satisfy $v=\dt\varphi$,
that any solution $(\varphi,v)\in C^1\big([0,T], T(\A D^{m,p}_{0,N;0})\big)$ of \eqref{eq:the_dynamical_system}
produces a solution $u:=R_{\varphi^{-1}}v$ of equation \eqref{eq:euler'}.
Note that, by Theorem \ref{th:the_group}, we have 
$u\in C^0\big([0,T],\A^{m,p}_{0,N;0}\big)\cap C^1\big([0,T],\A^{m-1,p}_{0,N;0}\big)$.
In this way, we proved

\begin{Lem}\label{lem:dynamical_system<-->euler'}
The map
\[
(\varphi,v)\mapsto R_{\varphi^{-1}}v,\quad
C^1\big([0,T], T(\A D^{m,p}_{0,N;0})\big)\to
C^0\big([0,T],\A^{m,p}_{0,N;0}\big)\cap 
C^1\big([0,T],\A^{m-1,p}_{0,N;0}\big)
\]
gives a bijective correspondence between solutions of the dynamical system \eqref{eq:the_dynamical_system}
and the solutions of equation \eqref{eq:euler'} in 
$C^0\big([0,T],\A^{m,p}_{0,N;0}\big)\cap C^1\big([0,T],\A^{m-1,p}_{0,N;0}\big)$.
\end{Lem}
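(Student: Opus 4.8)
The plan is to build the correspondence by hand in both directions and then observe that the two constructions are mutually inverse. Starting from a solution $u\in C^0\big([0,T],\A^{m,p}_{0,N;0}\big)\cap C^1\big([0,T],\A^{m-1,p}_{0,N;0}\big)$ of \eqref{eq:euler'}, I would first invoke Proposition \ref{prop:ode} for the curve $u$ to produce the flow $\varphi\in C^1\big([0,T],\A D^{m,p}_{0,N;0}\big)$ with $\dt\varphi=u\circ\varphi$ and $\varphi|_{t=0}=\id$, and set $v:=\dt\varphi=R_\varphi u$. At this stage $(\varphi,v)$ is only known to be a continuous curve in $T\big(\A D^{m,p}_{0,N;0}\big)$, as in \eqref{eq:the_integral_curve}.

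The core step is to promote $(\varphi,v)$ to a $C^1$ integral curve of the Euler vector field ${\mathcal E}$. Working pointwise in $x$, which is legitimate because the Sobolev embedding $\A^{m-1,p}_{0,N;0}\subseteq C^1$ gives $u,\varphi\in C^1\big([0,T]\times\R^d,\R^d\big)$, the chain rule yields $v_t=\big(u_t+u\cdot\nabla u\big)\circ\varphi$, which by \eqref{eq:euler'} equals $\big(R_\varphi\circ\Delta^{-1}\circ\nabla\circ Q\circ R_{\varphi^{-1}}\big)(v)={\mathcal E}_2(\varphi,v)$; integrating in $t$ gives the pointwise identity \eqref{eq:integral_formula}. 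Now Theorem \ref{th:E-smooth} enters: since ${\mathcal E}$ is real-analytic, in particular continuous, the curve $s\mapsto{\mathcal E}_2(\varphi(s),v(s))$ is continuous into $\A^{m,p}_{0,N;0}$, so the integral in \eqref{eq:integral_formula} converges there and the identity holds in $\A^{m,p}_{0,N;0}$. Hence $v\in C^1\big([0,T],\A^{m,p}_{0,N;0}\big)$ with $\dt v={\mathcal E}_2(\varphi,v)$, and together with $\dt\varphi=v$ this is exactly the dynamical system \eqref{eq:the_dynamical_system} with initial data $(\id,u_0)$.

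For the converse, given a solution $(\varphi,v)\in C^1\big([0,T],T(\A D^{m,p}_{0,N;0})\big)$ of \eqref{eq:the_dynamical_system}, the first component says $\dt\varphi=v$, so $\varphi$ is the flow of $u:=R_{\varphi^{-1}}v$; by Theorem \ref{th:the_group} this $u$ lies in $C^0\big([0,T],\A^{m,p}_{0,N;0}\big)\cap C^1\big([0,T],\A^{m-1,p}_{0,N;0}\big)$. Running the chain-rule computation backwards turns $\dt v={\mathcal E}_2(\varphi,v)$ into $u_t+u\cdot\nabla u=\Delta^{-1}\circ\nabla\circ Q(u)$, i.e.\ \eqref{eq:euler'}. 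The maps $u\mapsto(\varphi,v)$ and $(\varphi,v)\mapsto R_{\varphi^{-1}}v$ are mutually inverse: $R_{\varphi^{-1}}(R_\varphi u)=u$, while the uniqueness clause of Proposition \ref{prop:ode} guarantees that the flow reconstructed from $u=R_{\varphi^{-1}}v$ is $\varphi$ again. Finally, when $u_0$ is divergence free one appeals to Lemma \ref{lem:no_pressure_term} to pass from solutions of \eqref{eq:euler'} to genuine solutions of the Euler equation \eqref{eq:euler}.

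I expect the only genuinely delicate point to be the regularity upgrade in the second paragraph, namely passing from the merely pointwise-in-$x$ time derivative supplied by the ODE theory of Proposition \ref{prop:ode} to a bona fide $C^1$ curve in the Banach space $\A^{m,p}_{0,N;0}$. This is precisely where the real-analyticity of ${\mathcal E}$ — obtained through the factorization \eqref{eq:E-factoring} and the smoothness of the conjugate Laplacian and of $Q$ — is indispensable; without the continuity of $s\mapsto{\mathcal E}_2(\varphi(s),v(s))$ the integral formula would not close up in the correct topology. The remaining parts are routine bookkeeping with the composition and inversion estimates of Theorem \ref{th:the_group} together with the mapping properties of $Q$ and $\Delta^{-1}$.
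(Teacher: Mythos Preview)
Your proposal is correct and follows the paper's own argument essentially step for step: the forward direction via Proposition~\ref{prop:ode}, the pointwise chain-rule computation of $v_t$, the regularity upgrade through the integral formula \eqref{eq:integral_formula} using the continuity of ${\mathcal E}_2$ from Theorem~\ref{th:E-smooth}, and the converse via Theorem~\ref{th:the_group}. The only superfluous remark is the final appeal to Lemma~\ref{lem:no_pressure_term}, which belongs to the next statement (Proposition~\ref{prop:dynamical_system}) rather than to this lemma.
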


\noindent Combining Lemma \ref{lem:dynamical_system<-->euler'} with Lemma \ref{lem:no_pressure_term} we have

\begin{Prop}\label{prop:dynamical_system}
The map
\[
(\varphi,v)\mapsto R_{\varphi^{-1}}v,\quad
C^1\big([0,T], T(\A D^{m,p}_{0,N;0})\big)\to
C^0\big([0,T],\accentset{\,\,\,\circ}{\A}^{m,p}_{0,N;0}\big)\cap 
C^1\big([0,T],\accentset{\,\,\,\circ}{\A}^{m-1,p}_{0,N;0}\big)
\]
gives a bijective correspondence between solutions of the dynamical system \eqref{eq:the_dynamical_system}
with $u_0\in\accentset{\,\,\,\circ}{\A}^{m,p}_{0,N;0}$ and the solutions of the Euler equation \eqref{eq:euler}
in 
$C^0\big([0,T],\accentset{\,\,\,\circ}{\A}^{m,p}_{0,N;0}\big)\cap 
C^1\big([0,T],\accentset{\,\,\,\circ}{\A}^{m-1,p}_{0,N;0}\big)$.
\end{Prop}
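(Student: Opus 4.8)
The plan is to deduce Proposition \ref{prop:dynamical_system} by simply intersecting the bijection of Lemma \ref{lem:dynamical_system<-->euler'} with the dictionary furnished by Lemma \ref{lem:no_pressure_term}; no new analytic input is required, so the argument is essentially bookkeeping on the initial data.

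First I would treat the forward direction. Suppose $(\varphi,v)\in C^1\big([0,T],T(\A D^{m,p}_{0,N;0})\big)$ solves the dynamical system \eqref{eq:the_dynamical_system} with $u_0\in\accentset{\,\,\,\circ}{\A}^{m,p}_{0,N;0}$. By Lemma \ref{lem:dynamical_system<-->euler'} the curve $u:=R_{\varphi^{-1}}v$ lies in $C^0\big([0,T],\A^{m,p}_{0,N;0}\big)\cap C^1\big([0,T],\A^{m-1,p}_{0,N;0}\big)$ and solves \eqref{eq:euler'} with $u|_{t=0}=u_0$. Since $u_0$ is divergence free, Lemma \ref{lem:no_pressure_term} applies and shows simultaneously that $u\in C^0\big([0,T],\accentset{\,\,\,\circ}{\A}^{m,p}_{0,N;0}\big)\cap C^1\big([0,T],\accentset{\,\,\,\circ}{\A}^{m-1,p}_{0,N;0}\big)$ and that $u$ is a solution of the Euler equation \eqref{eq:euler} (with pressure term given by \eqref{eq:pressure}).

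Conversely, let $u\in C^0\big([0,T],\accentset{\,\,\,\circ}{\A}^{m,p}_{0,N;0}\big)\cap C^1\big([0,T],\accentset{\,\,\,\circ}{\A}^{m-1,p}_{0,N;0}\big)$ solve the Euler equation \eqref{eq:euler}; in particular $u_0:=u|_{t=0}\in\accentset{\,\,\,\circ}{\A}^{m,p}_{0,N;0}$. By the converse implication in Lemma \ref{lem:no_pressure_term}, $u$ is a solution of \eqref{eq:euler'}, hence by Lemma \ref{lem:dynamical_system<-->euler'} there is a unique $(\varphi,v)\in C^1\big([0,T],T(\A D^{m,p}_{0,N;0})\big)$ with $R_{\varphi^{-1}}v=u$, and this pair solves \eqref{eq:the_dynamical_system} with the above divergence free $u_0$; here $\varphi$ is nothing but the flow of $u$ furnished by Proposition \ref{prop:ode}. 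Injectivity of the restricted map is inherited from injectivity of the map in Lemma \ref{lem:dynamical_system<-->euler'}, and the two constructions above are mutually inverse, which establishes the claimed bijection.

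There is no serious obstacle here: the analytic content — real-analyticity of the Euler vector field, the mapping properties of $\Delta^{-1}$, and the automatic preservation of the divergence-free condition — has already been extracted in Theorem \ref{th:E-smooth}, Proposition \ref{prop:laplace_operator}, Lemma \ref{lem:no_pressure_term} and Lemma \ref{lem:dynamical_system<-->euler'}. The only point deserving a line of care is verifying that the restriction ``$u_0$ divergence free'' on the dynamical-system side corresponds precisely to the target class $C^0\big([0,T],\accentset{\,\,\,\circ}{\A}^{m,p}_{0,N;0}\big)\cap C^1\big([0,T],\accentset{\,\,\,\circ}{\A}^{m-1,p}_{0,N;0}\big)$ on the Euler side, which is exactly the equivalence asserted by Lemma \ref{lem:no_pressure_term}.
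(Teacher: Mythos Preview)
Your proposal is correct and follows exactly the paper's own approach: the paper simply states that Proposition \ref{prop:dynamical_system} is obtained by combining Lemma \ref{lem:dynamical_system<-->euler'} with Lemma \ref{lem:no_pressure_term}, and you have merely spelled out the bookkeeping of that combination in both directions.
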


\medskip

\noindent{\em Proof of Theorem \ref{th:main}.}
The Theorem follows directly from Theorem \ref{th:E-smooth}, Proposition \ref{prop:dynamical_system}, 
and the existence and uniqueness theorem of solutions of an ODE in a Banach space (\cite{Lang}).
We also use that the solutions of \eqref{eq:euler'} (and hence \eqref{eq:euler}) admit the symmetry group
$u\mapsto u_c$ where $u_c(t):=c\,u(c t)$, $c>0$.
\finishproof

\medskip

\noindent{\em Proof of Proposition \ref{prop:integrals}.}
Let $u\in C^0\big([0,T],\A^{m,p}_{0,N;0}\big)\cap 
C^1\big([0,T],\A^{m-1,p}_{0,N;0}\big)$
be the solution of the Euler equation \eqref{eq:euler} given by Theorem \ref{th:main}.
Then, by Lemma \ref{lem:no_pressure_term}, $u$ is a solution of
\begin{equation}\label{eq:euler''}
u_t+u\cdot\nabla u=\Delta^{-1}\circ\nabla\circ Q(u),\quad u|_{t=0}=u_0\,.
\end{equation}
Note that by Lemma \ref{lem:no_log_terms_in_E}, Corollary \ref{coro:invariance_1} and 
Corollary \ref{coro:invariance_2} the right hand side of
\eqref{eq:euler''} belongs to the closed subspace ${\widetilde\A}^{m,p}_{1,N;-1}$ in $\A^{m,p}_{0,N;0}$
defined in \eqref{eq:A-tilde}. 
Moreover, by Lemma \ref{lem:properties_log_spaces} and Proposition \ref{prop:laplace_operator},
the map $t\mapsto\Delta^{-1}\circ\nabla\circ Q(u(t))$,
$[0,T]\to{\widetilde\A}^{m,p}_{1,N;-1}$, is continuous with respect to the norm in $\A^{m,p}_{0,N;0}$. 
Similarly, by Lemma \ref{lem:properties_log_spaces}, the composed map
$t\mapsto u(t)\cdot\nabla u(t)$, $[0,T]\to\A^{m-1,p}_{1,N+1;-1}\subseteq\A^{m-1,p}_{1,N;-1}$, is continuous
and the space $\A^{m-1,p}_{1,N;-1}$ is closed in $\A^{m-1,p}_{0,N;0}$.
By integrating \eqref{eq:euler''} in $\A^{m-1,p}_{0,N;0}$ and by using that ${\widetilde\A}^{m-1,p}_{1,N;-1}$
is closed in $\A^{m-1,p}_{0,N;0}$ we conclude that for any $t\in[0,T]$ we have
$u(t)-u_0\in{\widetilde\A}^{m-1,p}_{1,N;-1}$. 
As $u\in C^0\big([0,T],\A^{m,p}_{0,N;0}\big)$ we then conclude from 
\eqref{eq:A-tilde} that for any $t\in[0,T]$,
\begin{equation}\label{eq:u(t)-u_0}
u(t)-u_0\in\A^{m,p}_{1,N;-1}\oplus
\mathop{\rm span}_{\R} \big\{u_k^j\,\big|\, 1\le j\le s_k, \ \max\{1,d-2\}\le k\leq N\big\}
\end{equation}
where the functions $u_k^j$ are defined in \eqref{def:ujk}. 
The statement of the Proposition \ref{prop:integrals} now follows directly from \eqref{eq:u(t)-u_0} and \eqref{def:ujk}.
A final remark needed is that the space of homogeneous harmonic polynomials of degree $l\ge 0$ (when restricted to 
the sphere $\s^{d-1}$) coincides with the eigenspace of the ``positive'' Laplace-Beltrami operator $-\Delta_S$ with
eigenvalue $\mu_l=l(k+d-2)$ (see e.g.\ \cite{Shubin}).
\finishproof

\medskip

\noindent{\em Non-triviality of the conserved quantities.}
Let us now discuss the non-triviality of the conservation laws \eqref{eq:integrals} appearing in
Proposition \ref{prop:integrals}. For simplicity, we consider the case $d=2$. 
Take $0\leq k\leq N$ and consider the Hamiltonian $H:=a(\phi)(\log r)^k\chi(r)/r^{k-1}$, where $a$ is a 
$C^\infty$-smooth, real-valued function of $\phi\in\R/2\pi\Z$. 
Let $u$ be the Hamiltonian vector field corresponding to $H$:
\[
\begin{aligned}
u=\big(-H_y,H_x\big)
=\chi(r)\left(\frac{a_k^{k-1}(\phi)\,(\log r)^{k-1}+a_k^k(\phi)\,(\log r)^k}{r^k}\right)+f,
\end{aligned}
\]
where $f$ has compact support and
\[
a_k^k(\phi)=\Big( -a'(\phi)\cos\phi+(k-1)a(\phi)\sin\phi,-a'(\phi)\sin\phi-(k-1)a(\phi)\cos\phi\Big),
\]
with $a'=da/d\phi$. Clearly, $u\in\accentset{\,\,\,\circ}\A^{m,p}_{N;0}$. 
Now, if we identify real 2-vectors with complex numbers
and denote by $\sum_{l\in\Z}{\hat a}_l\,e^{-l\phi}$ the Fourier series of $a(\phi)$, we obtain
\[
\begin{aligned}
a_k^k(\phi)&= -a'(\phi)\cos\phi+(k-1)a(\phi)\sin\phi -i(a'(\phi)\sin\phi+(k-1)\cos\phi) \\
&=-\Big(a'(\phi)+i(k-1)a(\phi)\Big)e^{i\phi}\\
&=-i\sum_{l\in\Z}(l+k-2){\hat a}_{l-1}\,e^{i l\phi}.
\end{aligned}
\]
In particular, we see that if $h$ is a homogeneous harmonic polynomial of degree $l\notin\{2-k,k\}$ then the integral 
\eqref{eq:integrals} is a non-vanishing linear function of the Fourier coefficients ${\hat a}_{l-1}$ and 
${\hat a}_{-l-1}$ of $a(\phi)$.

\section{Asymptotic groups and asymptotic ODEs}\label{sec:ode}
In this section we prove Proposition \ref{prop:ode} stated in Section \ref{sec:the_group}.

\medskip

\noindent{\em Proof of Proposition \ref{prop:ode}.}
Assume that $0\le n\le N$, $-n\le \ell\le 0$.
Take $u\in C^0\big([0,T],\A^{m,p}_{n,N;\ell}\big)$ and consider the differential equation,
\begin{equation}\label{eq:ode2}
\dt\varphi(t)=u(t)\circ\varphi(t)\,.
\end{equation}
First, note that in view of Theorem \ref{th:the_group},
the map $g(t,\varphi):=u(t)\circ\varphi$, $g : [0,T]\times\A D^{m-1,p}_{n,N;\ell}\to\A^{m-1,p}_{n,N;\ell}$
and its partial derivative with respect to the second argument 
$D_2g :  [0,T]\times\A D^{m-1,p}_{n,N;\ell}\to{\mathcal L}(\A^{m-1,p}_{n,N;\ell},\A^{m-1,p}_{n,N;\ell})$
are continuous maps. This implies that $g$ is locally Lipschitz with respect to the second argument 
in $[0,T]\times\A D^{m-1,p}_{n,N;\ell}$.
In particular, by the existence and uniqueness theorem of solutions of an ODE in a Banach space, for any given
$\tau\in[0,T]$ there exist an open neighborhood $U(\tau)$ of $\tau$ in $[0,T]$
and an open neighborhood $V(\id)$ of the identity $\id$ in $\A D^{m-1,p}_{n,N;\ell}$ so that for any 
$t_0\in U(\tau)$ there exists a unique solution $\varphi\in C^1\big(U(\tau),\A D^{m-1,p}_{n,N;\ell}\big)$
of \eqref{eq:ode2} with initial condition $\varphi|_{t=t_0}=\id$.
Now, using the invariance of \eqref{eq:ode2} with respect to right-translations as well as the fact that
the right translation $R_{\varphi_0} : \A D^{m-1,p}_{n,N;\ell}\to\A D^{m-1,p}_{n,N;\ell}$ for a fixed element
$\varphi_0\in\A D^{m-1,p}_{n,N;\ell}$ is a $C^\infty$-smooth map, we see that for any $t_0\in U(\tau)$ and
for any $\varphi_0\in\A D^{m-1,p}_{n,N;\ell}$, equation \eqref{eq:ode2} with initial condition 
$\varphi|_{t=t_0}=\varphi_0$ has a unique solution $\varphi\in C^1\big(U(\tau),\A D^{m-1,p}_{n,N;\ell}\big)$.
This, together with the compactness of the interval $[0,T]$, implies that \eqref{eq:ode2} with initial condition
$\varphi|_{t=0}=\id$ has a unique solution,
\begin{equation}\label{eq:varphi_rough}
\varphi\in C^1\big([0,T],\A D^{m-1,p}_{n,N;\ell}\big).
\end{equation}
Further, we will use a bootstrap argument to show that this solution in fact lies in the space
$C^1\big([0,T],\A D^{m,p}_{n,N;\ell}\big)$.
As $\varphi\in C^1\big([0,T],\A D^{m-1,p}_{n,N;\ell}\big)$ satisfies \eqref{eq:ode2} with initial condition
$\varphi|_{t=0}=\id$, we obtain by applying $\partial/\partial x_k$, $k=1,...,d$, to \eqref{eq:ode2},
\begin{equation}\label{eq:linear_ode}
\left\{
\begin{array}{l}
{[{\rm d}\varphi]}^{\cdt}=[{\rm d}u]\circ\varphi\cdot[{\rm d}\varphi],\cr
[{\rm d}\varphi]_{t=0}=\Id,
\end{array}
\right.
\end{equation}
where by Theorem \ref{th:the_group},
\begin{equation}\label{eq:the_matrix}
A:=[{\rm d}u]\circ\varphi\in C^0\big([0,T],\A^{m-1,p}_{n+1,N+1;\ell-1}\big),
\end{equation}
and
\begin{equation}\label{eq:the_linear_solution}
[{\rm d}w]=[{\rm d}\varphi]-\Id\in C^1\big([0,T],\A^{m-2,p}_{n+1,N+1;\ell-1}\big),
\end{equation}
where $\varphi(x)=x+w(x)$, $w\in\A^{m-1,p}_{n,N;\ell}$.
In view of \eqref{eq:the_matrix} and the Banach algebra property of the
space $\A^{m-1,p}_{n+1,N+1;\ell-1}$, the equation ${[v]}^{\cdt}=A(t)\cdot [v]+A(t)$ with initial condition 
$[v]|_{t=0}=0$ is a linear ODE in the Banach space $\A^{m-1,p}_{n+1,N+1;\ell-1}$. This implies that there exists a 
unique solution $[v]\in C^1\big([0,T],\A^{m-1,p}_{n+1,N+1;\ell-1}\big)$. This together with \eqref{eq:linear_ode} and
\eqref{eq:the_linear_solution} then implies that,
\begin{equation}\label{eq:d_xw}
[{\rm d}w]\in C^1\big([0,T],\A^{m-1,p}_{n+1,N+1;\ell-1}\big)\,.
\end{equation}
Applying the divergence operator $\Div$ to the rows of this matrix we see that
\[
\Delta w \in C^1\big([0,T],\A^{m-2,p}_{n+2,N+2;\ell-2}\big)\,.
\]
This together with \cite[Proposition B.4]{McOwenTopalov2} and Remark \ref{rem:laplace_operator}
implies that $w\in C^1\big([0,T],\A^{m,p}_{0,N;\ell+2}\big)$. 
Comparing this with \eqref{eq:varphi_rough} we finally see that 
$w\in C^1\big([0,T],\A^{m,p}_{n,N;\ell}\big)$. This completes the proof of the
Proposition.
\finishproof

\medskip

\noindent{\em Proof of Corollary \ref{coro:ode_volume_preserving}.}
Let ${\rm M}_{d\times d}$ be the linear space of $d\times d$ matrices.
In view of the Sobolev embedding $\A^{m-1,p}_{n+1,N+1;\ell-1}\subseteq C^0$,
we see from \eqref{eq:linear_ode} and \eqref{eq:d_xw} that for any given $x\in\R^d$,
$[{\rm d}\varphi](\cdot,x)\in C^1\big([0,T],{\rm M}_{d\times d}\big)$
is the fundamental matrix of the linear system $\dt X=A(t,x) X$, $X|_{t=0}=\Id$, where 
$A(t,x):=[{\rm d}u]\big|_{(t,\varphi(t,x))}$. Since $\Div u(t)=0$ for any $t\in[0,T]$, we see from the Wronskian identity 
that for any $t\in[0,T]$ we have
\begin{equation}\label{eq:wronskian_identity}
\det[{\rm d}\varphi](t,x)=e^{\int_0^t\tr[{\rm d}u]|_{(s,\varphi(s,x))}\,{\rm d}s}=
e^{\int_0^t(\Div u)|_{(s,\varphi(s,x))}\,{\rm d}s}\,.
\end{equation}
As $\Div u\equiv 0$, we conclude that for any $t\in[0,T]$ we have $\det[{\rm d}\varphi](t,x)=1$.
This, together with  Proposition \ref{prop:ode}, then implies that 
$\varphi\in{\accentset{\,\,\,\circ}{\A}} D^{m,p}_{n,N;\ell}$.
\finishproof

\section{Volume preserving asymptotic diffeomorphisms}\label{sec:volume_preserving_diffeomorphisms}
Recall from Section \ref{sec:the_group} that
\[
\accentset{\,\,\,\circ}{\A} D^{m,p}_{0,N;0}=\big\{\varphi\in\A D^{m,p}_{0,N;0}\,\big|\,
\forall x\in\R^d,\,\det[{\rm d}_x\varphi]=1\big\}
\]
is the subgroup of volume preserving diffeomorphisms in $\A D^{m,p}_{0,N;0}$.
Clearly, $\accentset{\,\,\,\circ}{\A} D^{m,p}_{0,N;0}$ is a closed set in $\A D^{m,p}_{0,N;0}$ and it is
a topological subgroup with respect to the topology inherited from $\A D^{m,p}_{0,N;0}$.
Moreover, by Lemma \ref{lem:properties_log_spaces}, the map 
\[
\varphi\mapsto\det[{\rm d}\varphi]-1,\quad\A D^{m,p}_{0,N;0}\to\A^{m-1,p}_{1,N+1;-1},
\]
is a polynomial map, and hence $\accentset{\,\,\,\circ}{\A} D^{m,p}_{0,N;0}$ is a real analytic subvariety in 
$\A D^{m,p}_{0,N;0}$. We will prove that, in fact, $\accentset{\,\,\,\circ}{\A} D^{m,p}_{0,N;0}$ is a 
{\em real-analytic submanifold} in $\A D^{m,p}_{0,N;0}$.
More specifically, we prove

\begin{Th}\label{th:volume_preserving_diffeomorphisms}
\begin{itemize}
\item[$(a)$] $\accentset{\,\,\,\circ}{\A} D^{m,p}_{0,N;0}$ is a real-analytic submanifold in $\A D^{m,p}_{0,N;0}$;
\item[$(b)$] For any $\varphi\in\accentset{\,\,\,\circ}{\A} D^{m,p}_{0,N;0}$ the tangent space
$T_\varphi\big(\accentset{\,\,\,\circ}{\A} D^{m,p}_{0,N;0}\big)$ to 
$\accentset{\,\,\,\circ}{\A} D^{m,p}_{0,N;0}$ at $\varphi$ coincides with 
$\left(\varphi,R_\varphi\big(\accentset{\,\,\,\circ}{\A}^{m,p}_{0,N;0}\big)\right)$ where 
$R_\varphi\big(\accentset{\,\,\,\circ}{\A}^{m,p}_{0,N;0}\big)$ is the image of the right-translation
$R_\varphi : \accentset{\,\,\,\circ}{\A}^{m,p}_{0,N;0}\to\A^{m,p}_{0,N;0}$;
\item[$(c)$] The Euler vector field $\mathcal E$ is tangent to the submanifold 
$T(\accentset{\,\,\,\circ}{\A}D^{m,p}_{0,N;0})$ in $T(\A D^{m,p}_{0,N;0})$.
\end{itemize}
\end{Th}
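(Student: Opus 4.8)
The plan is to realize $\accentset{\,\,\,\circ}{\A}D^{m,p}_{0,N;0}$ as the zero set of the real-analytic (polynomial, by Lemma \ref{lem:properties_log_spaces}) map
\[
\Phi:\A D^{m,p}_{0,N;0}\to\A^{m-1,p}_{1,N+1;-1},\qquad \Phi(\varphi):=\det[{\rm d}\varphi]-1,
\]
and to apply the implicit function theorem in Banach spaces. First I would compute ${\rm D}\Phi$ at a point $\varphi\in\accentset{\,\,\,\circ}{\A}D^{m,p}_{0,N;0}$. Writing a tangent vector as $R_\varphi u=u\circ\varphi$ with $u\in\A^{m,p}_{0,N;0}$ (legitimate, since $R_\varphi$ is a linear isomorphism of $\A^{m,p}_{0,N;0}$ by Corollary \ref{coro:invariance_1} with $n=0$), and using $[{\rm d}(u\circ\varphi)]=([{\rm d}u]\circ\varphi)[{\rm d}\varphi]$, $\det[{\rm d}\varphi]=1$, and conjugation-invariance of the trace, one obtains
\[
\big({\rm D}\Phi(\varphi)\big)(R_\varphi u)=\tr\big([{\rm d}\varphi]^{-1}([{\rm d}u]\circ\varphi)[{\rm d}\varphi]\big)=(\Div u)\circ\varphi .
\]
Hence $\Ker {\rm D}\Phi(\varphi)=R_\varphi\big(\accentset{\,\,\,\circ}{\A}^{m,p}_{0,N;0}\big)$, a closed subspace; once $(a)$ is in hand this identification gives $(b)$. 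So everything reduces to showing that ${\rm D}\Phi(\varphi)$ is, at every point of $\accentset{\,\,\,\circ}{\A}D^{m,p}_{0,N;0}$, a split surjection onto a fixed closed subspace of the target into which $\Phi$ maps locally.

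Since $\accentset{\,\,\,\circ}{\A}D^{m,p}_{0,N;0}$ is a subgroup and, for fixed $\varphi_0\in\accentset{\,\,\,\circ}{\A}D^{m,p}_{0,N;0}$, right translation $\psi\mapsto\psi\circ\varphi_0$ is a real-analytic diffeomorphism of $\A D^{m,p}_{0,N;0}$ preserving $\accentset{\,\,\,\circ}{\A}D^{m,p}_{0,N;0}$ (its linear part is the bounded isomorphism $R_{\varphi_0}$ of $\A^{m,p}_{0,N;0}$ from Corollary \ref{coro:invariance_1}), it suffices to build a chart near $\id$. For small $w$ the straight-line homotopy $\varphi_t:=\id+tw$, $t\in[0,1]$, stays in $\A D^{m,p}_{0,N;0}$, and the null-Lagrangian (Piola) identity furnishes an explicit primitive $\det[{\rm d}(\id+w)]-1=\Div V(w)$ with $V(w)$ built from $w$ and the adjugate of $\Id+t[{\rm d}w]$; thus $\Phi$ maps a neighborhood of $\id$ into $Z:=\mathrm{range}\big(\Div:\A^{m,p}_{0,N;0}\to\A^{m-1,p}_{1,N+1;-1}\big)$. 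What then remains is the Hodge-type splitting: that $\Div$ has closed range $Z$ and a bounded right inverse $S:Z\to\A^{m,p}_{0,N;0}$, equivalently that $\accentset{\,\,\,\circ}{\A}^{m,p}_{0,N;0}$ is complemented in $\A^{m,p}_{0,N;0}$. Granting this, $\Phi:(\text{nbhd of }\id)\to Z$ is a real-analytic submersion with split kernel $\accentset{\,\,\,\circ}{\A}^{m,p}_{0,N;0}$, the implicit function theorem produces a real-analytic chart with $\Phi^{-1}(0)=\accentset{\,\,\,\circ}{\A}D^{m,p}_{0,N;0}$ near $\id$, and right-translating gives $(a)$, hence $(b)$.

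Constructing $S$ is the step I expect to be the main obstacle. One peels the asymptotic expansion of $g\in Z$ term by term: at order $r^{-k}$ one must invert, on $\s^{d-1}$, the operator $\Delta_S+\lambda_k$ with $\lambda_k=k(k+2-d)$, which is Fredholm of index zero and invertible off the eigenspaces of $-\Delta_S$; at the resonant orders one corrects with radial fields of the form $c(\theta)\theta$ (and, for $d=2$, logarithmic correctors), which is precisely the mechanism already encoded by the functions $u_k^j$ and by the spaces $\widehat\A^{m,p}_{1,N+1;-1}$, $\widehat\A^{m+1,p}_{0,N;0}$ and the commutative diagram of Section \ref{sec:laplace_operator} and Remark \ref{rem:laplace_operator}; finally the weighted-Sobolev remainder is handled by a right inverse of the divergence on $W^{m,p}_{\gamma_N}$ (a Bogovskii-type operator). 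The delicate part is the bookkeeping of which powers of $\log r$ and which orders $r^{-k}$ survive at each step — in particular that the coefficient regularities $H^{m+1+N-k,p}(\s^{d-1})$ built into the asymptotic spaces match the elliptic gain on the sphere, so that $S$ genuinely lands in $\A^{m,p}_{0,N;0}$ and is bounded; this is essentially what is imported from Proposition \ref{prop:laplace_operator}, Remark \ref{rem:laplace_operator}, and \cite[Proposition B.4]{McOwenTopalov2}. Closedness of $Z$ is then automatic from boundedness of $S$.

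For $(c)$ I would argue by flow invariance, using $(a)$, $(b)$. By Theorem \ref{th:E-smooth} and the existence theorem for ODEs in a Banach space, through each $(\varphi_0,v_0)\in T\big(\accentset{\,\,\,\circ}{\A}D^{m,p}_{0,N;0}\big)$ there is a short-time integral curve $(\varphi(t),v(t))$ of $\mathcal E$; setting $u(t):=R_{\varphi(t)^{-1}}v(t)$ one has $\dt\varphi=u\circ\varphi$ and, by the computation of Section \ref{sec:proof_main_theorem} (cf.\ Lemma \ref{lem:dynamical_system<-->euler'}), $u$ solves \eqref{eq:euler'} with $u_0=R_{\varphi_0^{-1}}v_0\in\accentset{\,\,\,\circ}{\A}^{m,p}_{0,N;0}$. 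By Lemma \ref{lem:no_pressure_term} the field $u(t)$ stays divergence free, so Liouville's formula (as in \eqref{eq:wronskian_identity}) together with $\det[{\rm d}\varphi_0]=1$ gives $\det[{\rm d}\varphi(t)]\equiv 1$, i.e.\ $\varphi(t)\in\accentset{\,\,\,\circ}{\A}D^{m,p}_{0,N;0}$; and then $v(t)=R_{\varphi(t)}u(t)\in R_{\varphi(t)}\big(\accentset{\,\,\,\circ}{\A}^{m,p}_{0,N;0}\big)=T_{\varphi(t)}\big(\accentset{\,\,\,\circ}{\A}D^{m,p}_{0,N;0}\big)$ by $(b)$. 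Thus the integral curve stays in $T\big(\accentset{\,\,\,\circ}{\A}D^{m,p}_{0,N;0}\big)$, so its velocity $\mathcal E(\varphi_0,v_0)$ lies in $T_{(\varphi_0,v_0)}T\big(\accentset{\,\,\,\circ}{\A}D^{m,p}_{0,N;0}\big)$, which is $(c)$. Equivalently, one may compute directly: taking $\Div$ of \eqref{eq:euler'} and using that $\Delta^{-1}$ commutes with $\nabla$ (Remark \ref{rem:laplace_operator}) together with \eqref{eq:the_trick1} yields $(\Div u)_t+u\cdot\nabla(\Div u)=0$, so that $\frac{d}{dt}\big|_{0}\det[{\rm d}\varphi]=0$ and $\frac{d^2}{dt^2}\big|_{0}\det[{\rm d}\varphi]=0$; these are exactly ${\rm D}\Phi(\varphi_0)v_0$ and ${\rm D}^2\Phi(\varphi_0)(v_0,v_0)+{\rm D}\Phi(\varphi_0)\mathcal E_2(\varphi_0,v_0)$, i.e.\ the two linearized conditions cutting out $T\big(\accentset{\,\,\,\circ}{\A}D^{m,p}_{0,N;0}\big)$ inside $T\big(\A D^{m,p}_{0,N;0}\big)$.
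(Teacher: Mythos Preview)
Your arguments for $(b)$ and $(c)$ are essentially the same as the paper's: the tangent-space identification follows from the chart at $\id$ together with right translation, and for $(c)$ the paper also runs the flow of $\mathcal E$ through $(\varphi_0,v_0)$, observes via Proposition \ref{prop:dynamical_system} and Corollary \ref{coro:ode_volume_preserving} that the trajectory stays in $T(\accentset{\,\,\,\circ}{\A}D^{m,p}_{0,N;0})$, and reads off tangency at $t=0$ (using right-translation invariance of $\mathcal E$ to reduce to $\varphi_0=\id$).

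For $(a)$, however, the paper takes a genuinely different and shorter route. Rather than constructing a bounded right inverse of $\Div$ on $\A^{m,p}_{0,N;0}$ and applying the implicit function theorem to $\Phi(\varphi)=\det[{\rm d}\varphi]-1$, the paper uses the \emph{exponential map of the Euler spray} as the submanifold chart. By Proposition \ref{prop:exp_local_diffeomorphism}, $\text{\rm Exp}_{\mathcal E}:\mathcal U\to\mathcal V$ is a real-analytic diffeomorphism between open neighborhoods of $0\in\A^{m,p}_{0,N;0}$ and $\id\in\A D^{m,p}_{0,N;0}$; by Proposition \ref{prop:exp_local_diffeomorphism'} it restricts to a bijection $\mathcal U\cap\accentset{\,\,\,\circ}{\A}^{m,p}_{0,N;0}\to\mathcal V\cap\accentset{\,\,\,\circ}{\A}D^{m,p}_{0,N;0}$. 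Since $\accentset{\,\,\,\circ}{\A}^{m,p}_{0,N;0}$ is a closed linear subspace, this exhibits $\accentset{\,\,\,\circ}{\A}D^{m,p}_{0,N;0}$ near $\id$ as the image of a split subspace under a real-analytic diffeomorphism, and right translation transports the chart to any point. The key observation behind Proposition \ref{prop:exp_local_diffeomorphism'} is that the Wronskian identity gives $\det[{\rm d}\varphi(t)]=e^{t\,\Div u_0}$ along the Euler flow, so $\text{\rm Exp}_{\mathcal E}(u_0)$ is volume preserving \emph{if and only if} $\Div u_0=0$. This trick leverages the real-analyticity of $\mathcal E$ (Theorem \ref{th:E-smooth}), already established for the well-posedness result, and completely bypasses the Hodge-type decomposition you outline. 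Your approach is the classical Ebin--Marsden one and is correct in principle, but the term-by-term inversion of $\Div$ on the sphere together with a Bogovskii operator on $W^{m,p}_{\gamma_N}$ is real work that the paper never needs to carry out; the exponential-map chart gets the submanifold structure essentially for free.
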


Before proving this Theorem let us first discuss the {\em exponential map} of the spray $\mathcal E$.
As ${\mathcal E}$ is a real analytic vector field on $T\big(\A D^{m,p}_{0,N;0}\big)$, we see from
the existence and uniqueness theorem of solutions of an analytic ODE in a Banach space (see e.g
\cite[Theorem 10.8.1, Theorem 10.8.2]{Dieudonne}) that there exists an open neighborhood $\mathcal U$
of zero in ${\A}^{m,p}_{0,N;0}$ such that for any initial data $(\varphi,v)|_{t=0}=(\id,u_0)$ the ODE
\begin{equation}\label{eq:DS}
(\dt\varphi,\dt v)={\mathcal E}(\varphi,v)\equiv\big(v,{\mathcal E}_2(\varphi,v)\big)
\end{equation}
has a unique real-analytic solution $(-2,2)\to\A D^{m,p}_{0,N;0}\times\A^{m,p}_{0,N;0}$,
$t\mapsto\big(\varphi(t;u_0),v(t;u_0)\big)$, so that the map
\begin{equation}\label{eq:DS-solution}
(-2,2)\times{\mathcal U}\to\A D^{m,p}_{0,N;0}\times\A^{m,p}_{0,N;0},\quad
(t,u_0)\mapsto\big(\varphi(t;u_0),v(t;u_0)\big),
\end{equation}
is real analytic. Here we use that the solutions of \eqref{eq:DS} admit the symmetry 
group $(\varphi,v)\mapsto(\varphi_c,v_c)$ where
$\varphi_c(t):=\varphi(c t)$ and $v_c(t):=c\,v(c t)$, $c>0$.
Now, consider the exponential map of $\mathcal E$,
\begin{equation}\label{eq:exponential_map}
\text{\rm Exp}_{\mathcal E} : {\mathcal U}\to\A D^{m,p}_{0,N;0},\quad u_0\mapsto\varphi(1;u_0),
\end{equation}
that, by the discussion above, is real analytic. By the discussed symmetry, for any $t\in(-2,2)$ one has
$\text{\rm Exp}_{\mathcal E}(t\,u_0)=\varphi(t)$.
This implies that ${\rm d}_0\text{\rm Exp}_{\mathcal E}=\id_{\A^{m,p}_{0,N;0}}$. 
Shrinking the neighborhood  $\mathcal U$ if necessary, we obtain from the inverse function theorem
the following

\begin{Prop}\label{prop:exp_local_diffeomorphism}
There exists an open neighborhood $\mathcal U$ of zero in ${\A}^{m,p}_{0,N;0}$ and an open
neighborhood $\mathcal V$ of $\id$ in $\A D^{m,p}_{0,N;0}$ so that 
$\text{\rm Exp}_{\mathcal E} : {\mathcal U}\to{\mathcal V}$ is a real analytic diffeomorphism.
\end{Prop}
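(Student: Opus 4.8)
The plan is to deduce Proposition~\ref{prop:exp_local_diffeomorphism} from the inverse function theorem in Banach spaces, taking as the key input the real-analyticity of the Euler vector field $\mathcal E$ established in Theorem~\ref{th:E-smooth}. First I would invoke the existence, uniqueness, and analytic-dependence theorem for ODEs with a real-analytic right-hand side in a Banach space (e.g.\ \cite[Theorem~10.8.1, Theorem~10.8.2]{Dieudonne}): since $\mathcal E$ is a real-analytic vector field on the open set $\A D^{m,p}_{0,N;0}\times\A^{m,p}_{0,N;0}\subseteq\A^{m,p}_{0,N;0}\times\A^{m,p}_{0,N;0}$, there exist $\varepsilon>0$ and an open ball $\mathcal U_0$ about $0$ in $\A^{m,p}_{0,N;0}$ such that for every $u_0\in\mathcal U_0$ the system $(\dt\varphi,\dt v)=\mathcal E(\varphi,v)$, $(\varphi,v)|_{t=0}=(\id,u_0)$, has a unique solution on $(-\varepsilon,\varepsilon)$ depending real-analytically on $(t,u_0)$.

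Next I would exploit the scaling symmetry of $\mathcal E$: if $(\varphi(t),v(t))$ solves \eqref{eq:DS} with data $(\id,u_0)$, then $\big(\varphi(c t),\,c\,v(c t)\big)$ solves it with data $(\id,c\,u_0)$ for any $c>0$, because $Q$ is quadratic while $R_\varphi$, $\nabla$, and $\Delta^{-1}$ are linear in the second argument, and the $\varphi$-component is left unchanged by $v\mapsto c v$; equivalently, this is the symmetry $u\mapsto u_c$, $u_c(t):=c\,u(c t)$, already used in the proof of Theorem~\ref{th:main}. Choosing $c$ small enough, this enlarges the interval of existence to $(-2,2)$ at the cost of shrinking the admissible initial data to some open neighborhood $\mathcal U_1$ of $0$, so that \eqref{eq:DS-solution} is a well-defined real-analytic map $(-2,2)\times\mathcal U_1\to\A D^{m,p}_{0,N;0}\times\A^{m,p}_{0,N;0}$ (staying in the \emph{open} set $\A D^{m,p}_{0,N;0}$ for all $|t|\le 1$ by continuity in $(t,u_0)$ and compactness of $[-1,1]$, after shrinking $\mathcal U_1$ if necessary). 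In particular $\text{\rm Exp}_{\mathcal E}\colon u_0\mapsto\varphi(1;u_0)$ is real-analytic on $\mathcal U_1$, and the scaling symmetry yields $\text{\rm Exp}_{\mathcal E}(t\,u_0)=\varphi(t;u_0)$ for $|t|<2$.

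Then I would differentiate this identity in $t$ at $t=0$: since $\dt\varphi(0;u_0)=v(0;u_0)=u_0$, we obtain ${\rm d}_0\text{\rm Exp}_{\mathcal E}=\id_{\A^{m,p}_{0,N;0}}$, which is a linear isomorphism of the model space. Because $\text{\rm Exp}_{\mathcal E}$ is real-analytic near $0$ and $\text{\rm Exp}_{\mathcal E}(0)=\id$, the inverse function theorem (in its real-analytic version) furnishes an open neighborhood $\mathcal U\subseteq\mathcal U_1$ of $0$ in $\A^{m,p}_{0,N;0}$ and an open neighborhood $\mathcal V$ of $\id$ in $\A D^{m,p}_{0,N;0}$ such that $\text{\rm Exp}_{\mathcal E}\colon\mathcal U\to\mathcal V$ is a real-analytic diffeomorphism; here one uses that $\A D^{m,p}_{0,N;0}$ is open in $\A^{m,p}_{0,N;0}$, so a neighborhood of $\id$ in the group is the same thing as a neighborhood in the model space.

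The genuinely hard work is already behind us, namely the real-analyticity of $\mathcal E$ (Theorem~\ref{th:E-smooth}), which rested on Proposition~\ref{prop:conjugate_laplace_operator_inverse} and Proposition~\ref{prop:B-smooth}. Granting that, the only steps here that require any care are the scaling bookkeeping needed to make the interval of existence uniformly $(-2,2)$ over $\mathcal U_1$ and the verification that the flow does not leave the open subset $\A D^{m,p}_{0,N;0}$ for $|t|\le 1$; neither is a real obstacle, so I expect this proof to be short.
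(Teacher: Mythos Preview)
Your proposal is correct and follows essentially the same approach as the paper: invoke the analytic ODE theorem (Dieudonn\'e) for the real-analytic vector field $\mathcal E$ from Theorem~\ref{th:E-smooth}, use the scaling symmetry $(\varphi,v)\mapsto(\varphi(ct),c\,v(ct))$ to arrange the existence interval $(-2,2)$, read off ${\rm d}_0\text{\rm Exp}_{\mathcal E}=\id$ from $\text{\rm Exp}_{\mathcal E}(t u_0)=\varphi(t;u_0)$, and conclude by the (real-analytic) inverse function theorem. The paper presents exactly this argument in the paragraph immediately preceding the statement of the Proposition.
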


Further, we prove

\begin{Prop}\label{prop:exp_local_diffeomorphism'}
For any $u_0\in{\mathcal U}\cap\accentset{\,\,\,\circ}{\A}^{m,p}_{0,N;0}$ we have 
$\text{\rm Exp}_{\mathcal E}(u_0)\in\accentset{\,\,\,\circ}{\A}D^{m,p}_{0,N;0}$ and the map
\begin{equation}\label{eq:exponential_map_restricted}
\text{\rm Exp}_{\mathcal E} : {\mathcal U}\cap\accentset{\,\,\,\circ}{\A}^{m,p}_{0,N;0}\to
{\mathcal V}\cap\accentset{\,\,\,\circ}{\A}D^{m,p}_{0,N;0}.
\end{equation}
is a real analytic diffeomorphism.
\end{Prop}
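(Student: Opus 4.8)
The plan is to leverage Proposition \ref{prop:exp_local_diffeomorphism}, which already gives that $\text{\rm Exp}_{\mathcal E}:{\mathcal U}\to{\mathcal V}$ is a real-analytic diffeomorphism, and to show that it carries ${\mathcal U}\cap\accentset{\,\,\,\circ}{\A}^{m,p}_{0,N;0}$ precisely \emph{onto} ${\mathcal V}\cap\accentset{\,\,\,\circ}{\A}D^{m,p}_{0,N;0}$. The whole statement will follow from the pointwise identity
\[
\det\,[{\rm d}_x\varphi(1;u_0)]=e^{(\Div u_0)(x)}\,,\qquad x\in\R^d,\ \ u_0\in{\mathcal U}\,,
\]
where $t\mapsto\big(\varphi(t;u_0),v(t;u_0)\big)$ denotes the integral curve \eqref{eq:DS-solution} of $\mathcal E$ through $(\id,u_0)$. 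Indeed, granting the identity, $\text{\rm Exp}_{\mathcal E}(u_0)=\varphi(1;u_0)$ is volume preserving, i.e.\ lies in $\accentset{\,\,\,\circ}{\A}D^{m,p}_{0,N;0}$, if and only if $\det[{\rm d}_x\varphi(1;u_0)]\equiv 1$, i.e.\ if and only if $\Div u_0\equiv 0$, i.e.\ $u_0\in\accentset{\,\,\,\circ}{\A}^{m,p}_{0,N;0}$; since $\text{\rm Exp}_{\mathcal E}:{\mathcal U}\to{\mathcal V}$ is a bijection, this at once shows that \eqref{eq:exponential_map_restricted} is a well-defined bijection.

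To establish the identity, fix $u_0\in{\mathcal U}$ and take the integral curve $(\varphi,v)$, which by the construction preceding Proposition \ref{prop:exp_local_diffeomorphism} is real-analytic on $(-2,2)$, hence of class $C^1$ on $[0,1]$, and satisfies $v=\dt\varphi$. By Lemma \ref{lem:dynamical_system<-->euler'}, the curve $u:=R_{\varphi^{-1}}v$ lies in $C^0\big([0,1],\A^{m,p}_{0,N;0}\big)\cap C^1\big([0,1],\A^{m-1,p}_{0,N;0}\big)$, solves equation \eqref{eq:euler'} with $u|_{t=0}=u_0$, and has $\varphi$ as its flow, $\dt\varphi=u\circ\varphi$. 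Exactly as in the proof of Lemma \ref{lem:no_pressure_term} (cf.\ \eqref{eq:div-evolution}) one then obtains the transport equation $(\Div u)_t+u\cdot\nabla(\Div u)=0$, whence $(\Div u)(t,\varphi(t,x))=(\Div u_0)(x)$ for all $t\in[0,1]$, $x\in\R^d$. Feeding this into the Wronskian (Liouville) identity \eqref{eq:wronskian_identity} — which is valid for the flow of $u$ prior to imposing any divergence-free constraint — gives $\det[{\rm d}_x\varphi(t,x)]=e^{\,t\,(\Div u_0)(x)}$, and setting $t=1$ yields the claim.

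It remains to record the real-analyticity. The map \eqref{eq:exponential_map_restricted} is the restriction of the real-analytic map $\text{\rm Exp}_{\mathcal E}$ to the open subset ${\mathcal U}\cap\accentset{\,\,\,\circ}{\A}^{m,p}_{0,N;0}$ of the closed subspace $\accentset{\,\,\,\circ}{\A}^{m,p}_{0,N;0}\subseteq\A^{m,p}_{0,N;0}$, hence is real-analytic; its inverse is the restriction of the real-analytic map $\text{\rm Exp}_{\mathcal E}^{-1}:{\mathcal V}\to{\mathcal U}$ from Proposition \ref{prop:exp_local_diffeomorphism}, and since this restriction takes values in the closed subspace $\accentset{\,\,\,\circ}{\A}^{m,p}_{0,N;0}$ it is real-analytic as a map into it. Therefore \eqref{eq:exponential_map_restricted} is a real-analytic diffeomorphism; in passing, this produces a real-analytic chart for $\accentset{\,\,\,\circ}{\A}D^{m,p}_{0,N;0}$ around $\id$, which, transported by right translations, will be used in the proof of Theorem \ref{th:volume_preserving_diffeomorphisms}.

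The only nontrivial ingredient is the determinant identity, and it in turn is essentially bookkeeping once one invokes Lemma \ref{lem:dynamical_system<-->euler'}, the divergence-transport equation from the proof of Lemma \ref{lem:no_pressure_term}, and the Liouville formula \eqref{eq:wronskian_identity} already derived in Section \ref{sec:ode}; so I do not expect a genuine obstacle here. The two points requiring care are (i) keeping the proof independent of Theorem \ref{th:volume_preserving_diffeomorphisms} to avoid circularity, and (ii) checking that the integral curve $(\varphi,v)$ is defined and $C^1$ on the whole interval $[0,1]$, which is exactly what Proposition \ref{prop:exp_local_diffeomorphism} and the scaling symmetry of \eqref{eq:DS} guarantee.
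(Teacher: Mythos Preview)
Your proof is correct and follows essentially the same route as the paper: the transport equation for $\Div u$ combined with the Wronskian identity \eqref{eq:wronskian_identity} to obtain $\det[{\rm d}_x\varphi(t)]=e^{t(\Div u_0)(x)}$, and then Proposition \ref{prop:exp_local_diffeomorphism} for the analytic-diffeomorphism framework. The only cosmetic difference is that the paper treats the forward inclusion separately by quoting Proposition \ref{prop:dynamical_system} and Corollary \ref{coro:ode_volume_preserving}, whereas you derive both inclusions at once from the determinant identity; your version is slightly more economical and also makes the real-analyticity of the restricted inverse explicit, which the paper leaves implicit.
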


\begin{Rem}\label{rem:analyticity_in_t}
In particular, we see that for any $u_0\in{\cal U}\cap\accentset{\,\,\,\circ}{\A}^{m,p}_{0,N;0}$ the map 
$t\mapsto\varphi(t;u_0)=\text{\rm Exp}_{\mathcal E}(t u_0)$, $(-2,2)\to\accentset{\,\,\,\circ}\A D^{m,p}_{N;0}$,
is real analytic (cf. \cite{KLT1,KLT2,Serfati,Shnir}).
\end{Rem}

\noindent{\em Proof of Proposition \ref{prop:exp_local_diffeomorphism'}.}
Take $u_0\in{\mathcal U}\cap\accentset{\,\,\,\circ}{\A}^{m,p}_{0,N;0}$ and let $(\varphi,v)$
be the solution of \eqref{eq:DS} discussed above.
In view of Proposition \ref{prop:dynamical_system}, 
the curve 
$u:=R_{\varphi^{-1}}v$ lies in $C^0\big([0,1],\accentset{\,\,\,\circ}{\A}^{m,p}_{0,N;0}\big)\cap 
C^1\big([0,1],\accentset{\,\,\,\circ}{\A}^{m-1,p}_{0,N;0}\big)$. 
As by \eqref{eq:DS} one has $v=\dt\varphi$, we obtain that $\varphi\in C^1\big([0,1],{\A}D^{m,p}_{0,N;0}\big)$
satisfies $\dt\varphi=u\circ\varphi$, $\varphi|_{t=0}=\id$. Then, Corollary \ref{coro:ode_volume_preserving}
implies that $\varphi\in C^1\big([0,1],\accentset{\,\,\,\circ}{\A}D^{m,p}_{0,N;0}\big)$. 
Hence, $\text{\rm Exp}_{\mathcal E}(u_0)=\varphi(1;u_0)\in\accentset{\,\,\,\circ}{\A}D^{m,p}_{0,N;0}$.
We will prove Proposition \ref{prop:exp_local_diffeomorphism'} by showing that 
\eqref{eq:exponential_map_restricted} is onto.
Due to Proposition \ref{prop:exp_local_diffeomorphism}, it suffices to show that if
$u_0\in{\mathcal U}\setminus\accentset{\,\,\,\circ}{\A}^{m,p}_{0,N;0}$ then 
$\text{\rm Exp}_{\mathcal E}(u_0)\notin\accentset{\,\,\,\circ}{\A}D^{m,p}_{0,N;0}$.
Take $u_0\in{\mathcal U}\setminus\accentset{\,\,\,\circ}{\A}^{m,p}_{0,N;0}$
and let $(\varphi,v)$ be the solution of \eqref{eq:DS}.
By Lemma \ref{lem:dynamical_system<-->euler'}, the curve 
$u:=R_{\varphi}v\in C^0([0,1],{\A}^{m,p}_{0,N;0})\cap C^1([0,1],{\A}^{m-1,p}_{0,N;0})$
satisfies equation \eqref{eq:euler'}. Using that $\Delta^{-1}$ and $\nabla$ commute
(Remark \ref{rem:laplace_operator}) we see that $u$ satisfies
$u_t+u\cdot\nabla u=\nabla\circ\Delta^{-1}\circ Q(u)$.
Applying $\Div$ to the both hand sides of this equality we conclude from \eqref{eq:the_trick1} that 
$(\Div u)_t+u\cdot\nabla(\Div u)=0$.
This implies that for any $x\in\R^d$ and for any $t\in[0,1]$, $(\Div u)(t,\varphi(t,x))=(\Div u_0)(x)$.
Then, in view of the Wronskian identity \eqref{eq:wronskian_identity} we conclude that
$\det[{\rm d}\varphi](t,x)=e^{t(\Div u_0)(x)}$, which implies that 
$\text{\rm Exp}_{\mathcal E}(u_0)\notin\accentset{\,\,\,\circ}{\A}D^{m,p}_{0,N;0}$.
Now, as mentioned above, the ontoness of \eqref{eq:exponential_map_restricted} follows from
Proposition \ref{prop:exp_local_diffeomorphism}.
\finishproof

\noindent{\em Proof of Theorem \ref{th:volume_preserving_diffeomorphisms}.}
$(a)$ In view of  Proposition \ref{prop:exp_local_diffeomorphism} and 
Proposition \ref{prop:exp_local_diffeomorphism'}, ${\mathcal V}\cap\accentset{\,\,\,\circ}{\A}D^{m,p}_{0,N;0}$
is diffeomorphic to ${\mathcal U}\cap\accentset{\,\,\,\circ}{\A}^{m,p}_{0,N;0}$ and hence it is a submanifold
in an open neighborhood of $\id$ in $\A D^{m,p}_{0,N;0}$. The general case then follows
as for any given $\psi\in\accentset{\,\,\,\circ}{\A}D^{m,p}_{0,N;0}$ the right
translation $R_\psi : \accentset{\,\,\,\circ}{\A}D^{m,p}_{0,N;0}\to\accentset{\,\,\,\circ}{\A}D^{m,p}_{0,N;0}$
is a diffeomorphism.\footnote{In fact, the map 
$R_\psi : \accentset{\,\,\,\circ}{\A}D^{m,p}_{0,N;0}\to\accentset{\,\,\,\circ}{\A}D^{m,p}_{0,N;0}$
is affine linear, and hence real analytic.}

\smallskip

\noindent $(b)$ The discussion above implies that 
$T_\id\big(\accentset{\,\,\,\circ}{\A}D^{m,p}_{0,N;0}\big)=\accentset{\,\,\,\circ}{\A}^{m,p}_{0,N;0}$.
As above, the general case follows as for any given $\psi\in\accentset{\,\,\,\circ}{\A}D^{m,p}_{0,N;0}$ the right
translation $R_\psi : \accentset{\,\,\,\circ}{\A}D^{m,p}_{0,N;0}\to\accentset{\,\,\,\circ}{\A}D^{m,p}_{0,N;0}$
is a diffeomorphism.

\smallskip

\noindent $(c)$ For any $u_0\in\accentset{\,\,\,\circ}{\A}D^{m,p}_{0,N;0}$ consider the solution
$(\varphi,v)\in C^1\big([0,1], T(\A D^{m,p}_{0,N;0})\big)$
of \eqref{eq:DS} with initial data $(\varphi,u_0)$. 
Arguing as in the proof of Proposition \ref{prop:exp_local_diffeomorphism'}, we conclude from 
Proposition \ref{prop:dynamical_system} and Corollary \ref{coro:ode_volume_preserving} that 
$\varphi\in C^1\big([0,1],\accentset{\,\,\,\circ}{\A}D^{m,p}_{0,N;0}\big)$.
In particular,
$v=\dt\varphi\in C^0\big([0,1],\accentset{\,\,\,\circ}{\A}^{m,p}_{0,N;0}\big)$.
Recalling that $(\varphi,v)\in C^1\big([0,1], T(\A D^{m,p}_{0,N;0})\big)$ we see that
$(\varphi,v)\in C^1\big([0,1], T(\accentset{\,\,\,\circ}\A D^{m,p}_{0,N;0})\big)$.
This implies that for any $u_0\in\accentset{\,\,\,\circ}{\A}D^{m,p}_{0,N;0}$,
${\mathcal E}(\id,u_0)=\frac{d}{dt}\big|_{t=0}\big(\varphi,v\big)$
is tangent to the submanifold $T(\accentset{\,\,\,\circ}\A D^{m,p}_{0,N;0})$ in
$T(\A D^{m,p}_{0,N;0})\equiv\A D^{m,p}_{0,N;0}\times \A^{m,p}_{0,N;0}$.
Before considering the general case, note that for any 
$\psi\in\accentset{\,\,\,\circ}\A D^{m,p}_{0,N;0}$ and for any 
$(\varphi,v)\in T(\accentset{\,\,\,\circ}\A D^{m,p}_{0,N;0})$,
${\mathcal E}(R_\psi\varphi,R_\psi v)={\mathsf R}_\psi\big({\mathcal E}(\varphi,v)\big)$,
where ${\mathsf R}_\psi\big(v,w\big):=\big(R_\psi v,R_\psi w\big)$. 
Now, take an arbitrary  $(\varphi_0,v_0)\in T(\accentset{\,\,\,\circ}\A D^{m,p}_{0,N;0})$. 
In view of $(b)$, we have that $v_0=R_{\varphi_0}u_0$ for some $u_0\in\accentset{\,\,\,\circ}{\A}^{m,p}_{0,N;0}$. 
In view of the invariance of $\mathcal E$ discussed above,
${\mathcal E}(\varphi_0,v_0)={\mathsf R}_{\varphi_0}\big({\mathcal E}(\id,u_0)\big)$.
The vector ${\mathsf R}_{\varphi_0}\big({\mathcal E}(\id,u_0)\big)$ is tangent to 
$T(\accentset{\,\,\,\circ}{\A}D^{m,p}_{0,N;0})$ as ${\mathcal E}(\id,u_0)$ is tangent 
to $T(\accentset{\,\,\,\circ}{\A}D^{m,p}_{0,N;0})$ and
${\mathsf R}_{\varphi_0}: (\psi,v)\mapsto (R_{\varphi_0}\psi,R_{\varphi_0} v)$,
$T(\accentset{\,\,\,\circ}\A D^{m,p}_{0,N;0})\to T(\accentset{\,\,\,\circ}\A D^{m,p}_{0,N;0})$,
is a diffeomorphism.
\finishproof

\appendix
\section{Properties of asymptotic spaces}\label{sec:appendix_properties}
In this Appendix for the convenience of the reader we collect several results on the asymptotic spaces
that are used in the main body of the paper. Take $1<p<\infty$.

\medskip

\noindent{\em Asymptotic spaces:}
First, we discuss the remainder space $W^{m,p}_\delta(\R^d)$, $\delta\in\R$ (see \eqref{eq:remainder_space}). 

\begin{Lem}\label{lem:remainder_space}
Let $m>k+\frac{d}{p}$, $k\ge 0$. Then $W^{m,p}_{\delta}(\R^d)\subseteq C^k(\R^d)$  and
there exists $C>0$ such that for any $f\in W^{m,p}_{\delta}(\R^d)$, and for any multi-index $\alpha$ such that 
$0\le |\alpha|\le k$,
\[
\sup_{x\in\R^d}\big(\x^{\delta+\frac{d}{p}+|\alpha|}|\partial^\alpha f(x)|\big)\le C
\|f\|_{W^{m,p}_{\delta}}.
\]
Moreover, for any $\alpha$ such that $0\le|\alpha|\le k$,
\[
\x^{\delta+\frac{d}{p}+|\alpha|}|\partial^\alpha f(x)|=o(1)\,\,\,\text{as}\,\,\,|x|\to\infty\,.
\]
\end{Lem}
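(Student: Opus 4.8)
The plan is to reduce the statement to the classical unweighted Sobolev embedding on a fixed ball and then exploit the scaling behaviour of the weights $\x$. First I would reduce to the case $k=0$, $\alpha=0$. Directly from the definition \eqref{eq:remainder_space} of the norm one checks that if $f\in W^{m,p}_\delta(\R^d)$ and $|\alpha|\le k$, then $\partial^\alpha f\in W^{m-|\alpha|,p}_{\delta+|\alpha|}(\R^d)$ with $\|\partial^\alpha f\|_{W^{m-|\alpha|,p}_{\delta+|\alpha|}}\le\|f\|_{W^{m,p}_\delta}$, since $\x^{(\delta+|\alpha|)+|\beta|}\partial^\beta(\partial^\alpha f)=\x^{\delta+|\alpha+\beta|}\partial^{\alpha+\beta}f$ is in $L^p$ whenever $|\beta|\le m-|\alpha|$. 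As $m>k+\frac dp$ gives $m-|\alpha|>\frac dp$, it therefore suffices to prove the following $k=0$ statement: if $s>\frac dp$ and $g\in W^{s,p}_\mu(\R^d)$, then $g$ is continuous, $\sup_x\x^{\mu+d/p}|g(x)|\le C\|g\|_{W^{s,p}_\mu}$, and $\x^{\mu+d/p}g(x)=o(1)$ as $|x|\to\infty$; applying this with $g=\partial^\alpha f$, $s=m-|\alpha|$, $\mu=\delta+|\alpha|$ yields all three assertions of the Lemma. The inclusion $W^{m,p}_\delta\subseteq C^k$ is immediate since $W^{m,p}_\delta\subseteq H^{m,p}_{loc}$ and $m-k>\frac dp$.

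For the weighted sup-bound I would localize dyadically. Fix $x_0$ with $R:=|x_0|\ge 2$ and rescale by setting $g_R(y):=g(Ry)$ on the annulus $\tfrac12\le|y|\le 2$. The ordinary Sobolev embedding on this annulus, evaluated at $y_0=x_0/R$, gives $|g(x_0)|=|g_R(y_0)|\le C\|g_R\|_{W^{s,p}(\{1/2\le|y|\le2\})}$ with $C$ independent of $R$. Undoing the change of variables yields
\[
\|g_R\|_{W^{s,p}}^p=\sum_{|\beta|\le s}R^{p|\beta|-d}\int_{R/2\le|x|\le2R}|\partial^\beta g(x)|^p\,dx,
\]
and since $\x$ is comparable to $R$ on the shell $\{R/2\le|x|\le2R\}$, we may estimate $\int_{R/2\le|x|\le2R}|\partial^\beta g|^p\le C R^{-p(\mu+|\beta|)}\|g\|_{W^{s,p}_\mu}^p$; summing, all powers of $R$ cancel and we are left with $\|g_R\|_{W^{s,p}}^p\le C R^{-d-p\mu}\|g\|_{W^{s,p}_\mu}^p$, i.e. $|g(x_0)|\le C\langle x_0\rangle^{-\mu-d/p}\|g\|_{W^{s,p}_\mu}$. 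For $|x_0|$ bounded the same bound follows from the Sobolev embedding on a fixed ball.

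For the $o(1)$ decay I would use density of $C_c^\infty(\R^d)$ in $W^{m,p}_\delta(\R^d)$. This density follows by first cutting off with $\psi(x/R)$, where $\psi\in C_c^\infty$ equals $1$ near the origin: Leibniz's rule produces error terms carrying a factor $R^{-|\beta|}$ which is absorbed by $\x^{|\beta|}\simeq R^{|\beta|}$ on the support of $\nabla\psi(\cdot/R)$, and dominated convergence then shows $\|f-\psi(\cdot/R)f\|_{W^{m,p}_\delta}\to0$; mollifying the compactly supported function $\psi(\cdot/R)f$ completes the approximation. Now, given $f$, pick $f_n\in C_c^\infty$ with $f_n\to f$ in $W^{m,p}_\delta$; by the bound of the previous paragraph (applied to $f-f_n$ after the reduction step), $\x^{\delta+\frac dp+|\alpha|}\partial^\alpha f_n\to\x^{\delta+\frac dp+|\alpha|}\partial^\alpha f$ uniformly on $\R^d$. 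Each $\x^{\delta+\frac dp+|\alpha|}\partial^\alpha f_n$ is continuous and vanishes outside a compact set, and a uniform limit of such functions vanishes at infinity; this is exactly the assertion.

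I expect the one place requiring genuine care — though still routine — to be the cutoff estimate in the density step, since one must keep track of how the polynomial weight interacts with the scaling of $\psi(\cdot/R)$ in the error terms of Leibniz's rule. The dyadic rescaling bound is the conceptual core, but it is a standard computation once the annuli and the comparability $\x\simeq R$ on each shell are set up.
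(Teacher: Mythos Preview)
Your proof is correct and follows a standard route: reduce to $k=0$ by shifting derivatives into the weight, then prove the weighted sup bound by rescaling annuli to unit size and invoking the unweighted Sobolev embedding, and finally deduce the $o(1)$ decay from density of $C_c^\infty$ in $W^{m,p}_\delta$ together with the sup bound. Each step is carried out correctly; in particular, your bookkeeping in the rescaling computation and in the Leibniz error for the cutoff $\psi(\cdot/R)$ is accurate.

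There is nothing to compare against in the paper itself: Lemma~\ref{lem:remainder_space} is stated in Appendix~\ref{sec:appendix_properties} without proof, as one of several results on the remainder spaces collected for the reader's convenience, with a reference to \cite[Section~1]{McOwenTopalov2} for details. Your argument is exactly the kind of proof one expects to find there, and is the natural one for weighted Sobolev embeddings of this type.
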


\noindent If $m>\frac{d}{p}$ and $\delta\ge-\frac{d}{p}$ then $W^{m,p}_\delta(\R^d)$ is a {\em Banach algebra}. 
We refer to \cite[Section 1]{McOwenTopalov2} for additional properties of the remainder space.

\medskip

Take $0\le n\le N$, $m>\frac{d}{p}$, $\ell\ge -n$, and consider the asymptotic space
$\A^{m,p}_{n,N;\ell}(\R^d)$. One has

\begin{Lem}\label{lem:properties_log_spaces}
\begin{itemize}
\item[$(a)$] If $n\le n_1$, $N\le N_1$, $0\le n_1\le N_1$ and $-n\le \ell_1\le \ell$ then
$\A^{m,p}_{n_1,N_1;\ell_1}(\R^d)\subseteq\A^{m,p}_{n,N;\ell}(\R^d)$ and the inclusion is bounded.
Moreover, we have $\A^{m,p}_{n+1,N+1;\ell}\subseteq W^{m,p}_{\gamma_n}$ and the inclusion is bounded;

\item[$(b)$] For any $j\ge 1$ and $k\ge 0$, multiplication by $\chi\frac{(\log r)^j}{r^k}$ is a bounded 
operator $\A^{m,p}_{n,N;\ell}(\R^d)\to\A^{m,p}_{n+k,N^-+k;\ell-k+j}(\R^d)$ for any $N^-<N$. Multiplication by
$\chi(r)/r^k$  is a bounded  operator $\A^{m,p}_{n,N;\ell}(\R^d)\to\A^{m,p}_{n+k,N+k;\ell-k}(\R^d)$;\footnote{By
definition the remainder space in $\A^{m,p}_{n,N+k;\ell}$ is $W^{m,p}_{\gamma_N+k}=W^{m,p}_{\gamma_{N+k}}$.}
\item[$(c)$] If $m>1+\frac{d}{p}$ then $u\mapsto\partial u/\partial x_j$,
$\A^{m,p}_{n,N;\ell}(\R^d)\to\A^{m-1,p}_{n+1,N+1;\ell-1}(\R^d)$ is bounded;
\item[$(d)$] For any $m_1, m_2>d/p$, $0\le n_1\le N_1$,  $0\le n_2\le N_2$, and $-n_1\le \ell_1$, $-n_2\le \ell_2$,
the point-wise multiplication $(u,v)\mapsto u\cdot v$,
\[
\A^{m_1,p}_{n_1,N_1;\ell_1}(\R^d)\times\A^{m_2,p}_{n_2,N_2;\ell_2}(\R^d)\to
\A^{\min\{m_1,m_2\},p}_{n_1+n_2,\min\{{N_1}^-+n_2,{N_2}^-+n_1\};\ell_1+\ell_2}(\R^d)
\]
is bounded for any ${N_1}^-<N_1$ and ${N_2}^-<N_2$.
\end{itemize}

\medskip

\noindent Item $(d)$ has the following two refinements:
\begin{itemize}
\item[$(d')$] For any $m_1, m_2>d/p$, $0\le n_1\le N_1$,  $0\le n_2\le N_2$,
point-wise multiplication $(u,v)\mapsto u\cdot v$,
\[
\A^{m_1,p}_{n_1,N_1;-n_1}(\R^d)\times\A^{m_2,p}_{n_2,N_2;-n_2}(\R^d)\to
\A^{\min\{m_1,m_2\},p}_{n_1+n_2,\min\{N_1+n_2,N_2+n_1\};-n_1-n_2}(\R^d)
\]
is bounded;
\item[$(d'')$] For any $m>d/p$, $1\le n\le N$,  $-n\le\ell_1,\ell_2$, point-wise multiplication 
$(u,v)\mapsto u\cdot v$,
\[
\A^{m,p}_{n,N;\ell_1}(\R^d)\times\A^{m,p}_{n,N;\ell_2}(\R^d)\to
\A^{m,p}_{n,N;\ell_1+\ell_2}(\R^d)\,,
\]
is bounded. 
\end{itemize}
In particular, the asymptotic space $\A^{m,p}_{n,N;\ell}(\R^d)$ with $-n\le \ell\le 0$ is a Banach algebra
with respect to the point-wise multiplication of functions.
\end{Lem}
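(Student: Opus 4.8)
The plan is to deduce the Banach algebra property directly from parts $(a)$, $(d')$ and $(d'')$ above, together with the fact (recorded in Section~\ref{sec:the_group}) that $\A^{m,p}_{n,N;\ell}(\R^d)$ is a Banach space. The argument is essentially bookkeeping with the indices, and the hypothesis $-n\le\ell\le 0$ is used precisely to ensure that the product of two elements lands back in $\A^{m,p}_{n,N;\ell}$ rather than in one of the larger target spaces of $(d)$.

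First I would record the bounded inclusion
\[
\A^{m,p}_{n,N;\ell}(\R^d)\subseteq\A^{m,p}_{n,N;0}(\R^d),
\]
which is part $(a)$ applied with $n_1=n$, $N_1=N$, $\ell_1=\ell$ and target log-index $0$: its hypotheses reduce exactly to $0\le n\le N$ and $-n\le\ell\le 0$. Thus any $u,v\in\A^{m,p}_{n,N;\ell}$ may in particular be regarded, with norm control, as elements of $\A^{m,p}_{n,N;0}$.

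Next I would split into the cases $n\ge 1$ and $n=0$. If $n\ge 1$, apply $(d'')$ with $\ell_1=\ell$ and $\ell_2=0$: both exponents are $\ge -n$ and $\ell_1+\ell_2=\ell$, so pointwise multiplication sends $\A^{m,p}_{n,N;\ell}\times\A^{m,p}_{n,N;0}$ boundedly into $\A^{m,p}_{n,N;\ell}$, which together with the inclusion above gives
\[
\|uv\|_{\A^{m,p}_{n,N;\ell}}\le C\,\|u\|_{\A^{m,p}_{n,N;\ell}}\,\|v\|_{\A^{m,p}_{n,N;0}}\le C'\,\|u\|_{\A^{m,p}_{n,N;\ell}}\,\|v\|_{\A^{m,p}_{n,N;\ell}}.
\]
If $n=0$, then necessarily $\ell=0$ and the assertion is that $\A^{m,p}_{0,N;0}$ is a Banach algebra; here I would invoke $(d')$ with $n_1=n_2=0$ and $N_1=N_2=N$, whose target space is $\A^{m,p}_{0,\min\{N,N\};0}=\A^{m,p}_{0,N;0}$, with the asserted boundedness. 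In either case pointwise multiplication is a bounded bilinear map of $\A^{m,p}_{n,N;\ell}$ into itself; combined with completeness and the obvious associativity, commutativity and bilinearity, this yields the Banach algebra property (rescaling the norm by $C'$ if a genuinely submultiplicative norm is wanted).

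The step I expect to be the main obstacle is precisely the index arithmetic that makes the detour through $\A^{m,p}_{n,N;0}$ necessary: feeding two copies of $\A^{m,p}_{n,N;\ell}$ into $(d)$ (or into $(d')$ with $n_1=n_2=n$) produces an output with log-index $2\ell$, which can be strictly less than $-n$ whenever $-n\le\ell<-n/2$, so that space need not even be defined, much less embed into $\A^{m,p}_{n,N;\ell}$. Relaxing one factor to log-index $0$ via $(a)$ and then using the one-sided refinement $(d'')$ (respectively $(d')$ for $n=0$) circumvents this, and no estimate beyond those in $(a)$, $(d')$, $(d'')$ is needed.
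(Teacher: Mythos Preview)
Your deduction of the Banach algebra property from parts $(a)$, $(d')$ and $(d'')$ is correct, and the index bookkeeping---in particular the detour through $\A^{m,p}_{n,N;0}$ to avoid the ill-defined log-index $2\ell<-n$---is carried out carefully. Note, however, that the paper does not actually prove this lemma: it simply states ``We refer to \cite[Appendix B]{McOwenTopalov2} for the proof of this lemma.'' So there is no in-paper proof to compare against; your argument supplies exactly the natural justification for the concluding ``In particular'' sentence, taking parts $(a)$--$(d'')$ as given.
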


\noindent We refer to \cite[Appendix B]{McOwenTopalov2} for the proof of this lemma.

\medskip

These asymptotic spaces are useful in studying the Laplace operator and its inverse. However,
due to the logarithmic potential when $d=2$, we need to introduce some additional spaces.
Denote by $\A^{m+1,p}_{1^*,N}(\R^2)$ the space of $u\in H^{m+1}_{loc}(\R^2)$ such that,
\[
u(x)=\chi(r)\Big(a_0^*\log r +\frac{a_1(\theta)}{r}+\cdots+\frac{a_N(\theta)}{r^N}\Big)+f(x)\,,
\]
where $a_0^*=\text{\rm const}$, $a_k\in H^{m+1+N-k,p}(\s^1)$, $k=1,...,N$, and 
$f\in W^{m,p}_{\gamma_N}(\R^d)$. The space $\A^{m+1,p}_{1^*,N}(\R^2)$ is equipped with a
norm similar to \eqref{eq:the_norm}. Clearly, the space $\A^{m+1,p}_{1^*,N}(\R^2)$ is closed subspace
in $\A^{m+1,p}_{0,N;1}(\R^2)$ and the embedding 
$\A^{m+1,p}_{1^*,N}(\R^2)\subseteq\A^{m+1,p}_{0,N;1}(\R^2)$ is continuous.

\begin{Lem}\label{lem:inverse_laplacian_remainder_space}
Suppose that $d\ge 2$ and $m>1+\frac{d}{p}$.
\begin{itemize}
\item[$(a)$]  For $d=2$, there exists a bounded operator,
\[
K  :  W^{m-1,p}_{\gamma_N+2}(\R^2)\to\A^{m+1,p}_{1^*,N}(\R^2)\,,
\]
such that $\Delta Kg=g$. More specifically, $u=Kg$ is of the form
\begin{equation}\label{eq:d=2}
u(x)=\chi(r)\Big(a_0^*\log r+\frac{a_1(\theta)}{r}+...+\cdots\frac{a_N(\theta)}{r^N}\Big)+f(x)\,,
\,\,\,\,f\in W^{m+1,p}_{\gamma_N},
\end{equation}
where $a_0^*$ is a constant and $a_k(\theta)/r^k$, $k=1,...,N$ are harmonic functions on 
$\R^2\setminus\{0\}$;
\item[$(b)$] For $d\ge 3$, there exists a bounded operator,
\[
K  :  W^{m-1,p}_{\gamma_N+2}(\R^d)\to\A^{m+1,p}_{d-2,N}(\R^d)\,,
\]
such that $\Delta Kg=g$. More specifically, $u=Kg$ is of the form
\begin{equation}\label{eq:d>=3}
u(x)=\chi(r)\Big(\frac{a_{d-2}}{r^{d-2}}+\frac{a_{d-1}(\theta)}{r^{d-1}}+\cdots+\frac{a_N(\theta)}{r^N}\Big)+f(x),
\,\,\,\,f\in W^{m+1,p}_{\gamma_N},
\end{equation}
where $a_{d-2}=\text{\rm const}$ and $a_k(\theta)/r^k$, $k=d-2,...,N$ are harmonic functions on 
$\R^d\setminus\{0\}$.
\end{itemize}
\end{Lem}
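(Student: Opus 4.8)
\noindent{\em Proof proposal.}
The natural approach combines the known mapping properties of the Laplacian on weighted Sobolev spaces with an explicit bookkeeping of which harmonic multipole terms must be subtracted off. The key fact is that, for the weight $\gamma_N=N+\gamma_0$ with $0<\gamma_0+d/p<1$, the operator $\Delta:W^{m+1,p}_{\gamma_N}(\R^d)\to W^{m-1,p}_{\gamma_N+2}(\R^d)$ has closed range (this is precisely why $\gamma_0$ is chosen this way; see \cite{McOwen}) and is injective: any function in $W^{m+1,p}_{\gamma_N}\subseteq H^{m+1,p}_{loc}$ that is harmonic is an entire harmonic function decaying at infinity, hence vanishes by Liouville. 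Since the weight $\gamma_N$ is non-exceptional, i.e.\ $N<\gamma_N+d/p<N+1$, the range is cut out by finitely many linear conditions: $h\in W^{m-1,p}_{\gamma_N+2}$ lies in $\Delta\big(W^{m+1,p}_{\gamma_N}\big)$ if and only if $\int_{\R^d}h(y)\,q(y)\,{\rm d}y=0$ for every harmonic polynomial $q$ of degree at most $N+2-d$ (if $d\ge 3$), resp.\ of degree at most $N$ (if $d=2$). These are exactly the harmonic polynomials $q$ for which $\x^{-(\gamma_N+2)}q\in L^{p'}(\R^d)$, i.e.\ for which $h\mapsto\int hq$ is a bounded functional on $W^{m-1,p}_{\gamma_N+2}$; the cutoff on $\deg q$ comes straight from $0<\gamma_0+d/p<1$.

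The plan is to choose the leading terms so as to annihilate exactly these obstructions. Assume $d\ge 3$ (the case $d=2$ is indicated below). For each $k$ with $d-2\le k\le N$ let $a_k(\theta)$ be a spherical harmonic of degree $k-d+2$, that is an eigenfunction of $-\Delta_S$ with eigenvalue $\lambda_k=k(k+2-d)=\mu_{k-d+2}$; then $a_k(\theta)/r^k$ is harmonic on $\R^d\setminus\{0\}$, so $\Delta\big(\chi(r)\,a_k(\theta)/r^k\big)$ is supported in $\{1\le|x|\le 2\}$ and lies in $W^{m-1,p}_{\gamma_N+2}$, being $H^{m-1,p}$-regular because $a_k\in H^{m+1+N-k,p}(\s^{d-1})$ and $m+1+N-k-2\ge m-1$. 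A Green's identity computation on balls $\{|x|\le R\}$, using the homogeneity of $a_k(\theta)/r^k$ and $q$ together with the orthogonality of spherical harmonics of distinct degrees, gives for $q$ a harmonic polynomial of degree $\ell$
\[
\int_{\R^d}\Delta\big(\chi\,a_k/r^k\big)\,q\,{\rm d}x=
\begin{cases}
(2\ell+d-2)\displaystyle\int_{\s^{d-1}}a_k(\theta)\,q(\theta)\,{\rm d}s(\theta),&\ell=k-d+2,\\[2mm]
0,&\text{otherwise},
\end{cases}
\]
so only the term with $k=\ell+d-2$ contributes to $\int_{\R^d}\big(g-\Delta(\chi\sum_k a_k/r^k)\big)q\,{\rm d}x$. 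Hence $g-\Delta\big(\chi\sum_{k=d-2}^{N}a_k/r^k\big)$ meets all the orthogonality conditions above precisely when, for each $\ell$ with $0\le\ell\le N+2-d$, the degree-$\ell$ spherical harmonic $a_{\ell+d-2}$ is chosen so that $\int_{\s^{d-1}}a_{\ell+d-2}\,q\,{\rm d}s=\tfrac{1}{2\ell+d-2}\int_{\R^d}g\,q\,{\rm d}x$ for all degree-$\ell$ harmonic polynomials $q$; this determines $a_{\ell+d-2}$ uniquely (note $2\ell+d-2\ge d-2\ge 1$), and $a_k$ depends linearly and boundedly on $g$ since $\int gq$ is bounded on $W^{m-1,p}_{\gamma_N+2}$ for $\deg q\le N+2-d$. (When $N<d-2$ the sum is empty and no multipole terms occur.)

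For $d=2$ the only difference concerns the obstruction at $\ell=0$: the constant is harmonic on $\R^2$, so in place of an algebraic term one subtracts $\chi(r)\,a_0^*\log r$, for which $\int_{\R^2}\Delta(\chi\log r)\,{\rm d}x=2\pi$ by \eqref{eq:mean_2pi}; the condition $\int_{\R^2}g\,{\rm d}x=2\pi\,a_0^*$ then fixes $a_0^*=M(g)/(2\pi)$, while the remaining $a_k$, $1\le k\le N$, are determined as above, and no power $(\log r)^j$ with $j\ge 2$ is needed since the list of obstructions is finite. With these coefficients $h:=g-\Delta\big(\chi\sum_{k\ge d-2}a_k/r^k\big)$ (resp.\ $h:=g-\Delta\big(\chi a_0^*\log r+\chi\sum_{k\ge 1}a_k/r^k\big)$ when $d=2$) lies in the closed subspace $\Delta\big(W^{m+1,p}_{\gamma_N}\big)$, so by the open mapping theorem there is a unique $f\in W^{m+1,p}_{\gamma_N}$ with $\Delta f=h$ and $\|f\|_{W^{m+1,p}_{\gamma_N}}\le C\,\|g\|_{W^{m-1,p}_{\gamma_N+2}}$; setting $Kg:=\chi\sum_{k\ge d-2}a_k/r^k+f$ (resp.\ $Kg:=\chi a_0^*\log r+\chi\sum_{k\ge 1}a_k/r^k+f$) gives the desired bounded operator with $\Delta Kg=g$, $Kg$ of the asserted form, and each $a_k(\theta)/r^k$ harmonic on $\R^d\setminus\{0\}$.

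I expect the main obstacle to be establishing the cokernel structure of the weighted Laplacian at the weight $\gamma_N$, namely that $\Delta\big(W^{m+1,p}_{\gamma_N}\big)$ is cut out exactly by pairing against harmonic polynomials of degree $\le N+2-d$ (resp.\ $\le N$), and matching that list term by term, via the Green's identity computation above, with the multipole terms $\chi a_k/r^k$, $d-2\le k\le N$; the hypothesis $0<\gamma_0+d/p<1$ enters precisely to place $\gamma_N+d/p$ strictly between the consecutive exceptional values $N$ and $N+1$, which is what simultaneously gives the closed range and pins down the cutoff $N+2-d$. For $d=2$ one must additionally note that the unavoidable nonvanishing of $M(g)$ is absorbed by a single genuine $\log r$ term and nothing worse.
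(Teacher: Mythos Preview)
Your argument is correct and is essentially the approach indicated in the paper (which defers to \cite[Section 3]{McOwenTopalov2}): use the closed-range/Fredholm theory of $\Delta$ on weighted Sobolev spaces from \cite{McOwen} at the non-exceptional weight $\gamma_N$, identify the finite-dimensional cokernel with harmonic polynomials of degree $\le N+2-d$ (resp.\ $\le N$ when $d=2$), and cancel each obstruction by a multipole $\chi\,a_k(\theta)/r^k$ (resp.\ $\chi\,a_0^*\log r$ for the zeroth obstruction when $d=2$) via a Green's identity pairing. The Remarks immediately following the Lemma in the paper---that the asymptotic terms are combinations of $\partial^\alpha\mathcal{E}_d$ and that $a_k$ is determined by $\int g\,h\,{\rm d}x$ for $h$ harmonic of degree $k-d+2$---confirm that this is exactly the intended mechanism.

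Two minor points. First, your Green's identity computation carries an overall sign (the boundary term is $-(2\ell+d-2)\int_{\s^{d-1}}a_k q\,{\rm d}s$, not $+(2\ell+d-2)$), but this is irrelevant to the argument since all you need is non-degeneracy of the pairing. Second, the boundedness of $g\mapsto a_k$ into $H^{m+1+N-k,p}(\s^{d-1})$ is immediate because each $a_k$ lies in a fixed finite-dimensional space of $C^\infty$ spherical harmonics and depends on $g$ through finitely many bounded linear functionals; you state this but it is worth making explicit that no regularity bookkeeping is needed for the $a_k$ themselves.
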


\noindent For the proof of this Lemma we refer to \cite[Section 3]{McOwenTopalov2}.

\begin{Rem}
Using basic facts from the theory of distributions one can easily see that the asymptotic terms in 
\eqref{eq:d=2} and  \eqref{eq:d>=3} are linear combinations with 
constant coefficients of the derivatives $\partial^\alpha{\cal E}_d$, $|\alpha|\le N$, where
${\cal E}_2:=\frac{1}{2\pi}\log r$ and ${\cal E}_d=c_{d-2}/r^{d-2}$, $d\ge 3$, is the fundamental solutions of the 
Laplace operator in $\R^d$. In particular, the coefficients $a_k(\theta)$ appearing in \eqref{eq:d=2} and 
\eqref{eq:d>=3} are  polynomial functions in $\theta\equiv\frac{x}{|x|}$.
\end{Rem}

\begin{Rem}\label{rem:leading_term}
In fact, it follows from the proof of Lemma 3.1 in \cite{McOwenTopalov2} that the coefficients $a_0^*$ and
$a_{d-2}$ of the leading asymptotics of $K g$, $g\in W^{m-1,p}_{\gamma_N+1}$, are
equal to $\int_{\R^d}g(x)\,{\rm d}x$. More generally, the proof of Lemma 3.1 in \cite{McOwenTopalov2} shows that
for any $1\le k\le N$ the coefficient $a_k(\theta)$ in \eqref{eq:d=2} vanishes if and only if for any 
homogeneous harmonic polynomial $h$ of degree $k$, $\iint_{\R^2} g h\,{\rm d}x=0$.
A similar statement also holds for the coefficients in \eqref{eq:d>=3}
\end{Rem}

\medskip

\noindent {\em Additional spaces and results:}
Take $R>0$ and consider the open set $B^c_R=\big\{x\in\R^d\,\big|\,|x|>R\big\}\subseteq\R^d$.
For technical reasons we will also need the following modification of the asymptotic space $\A^{m,p}_{n,N;\ell}(\R^d)$: 
For any $m>\frac{d}{p}$, $1\le n\le N$, $-n\le \ell$, consider the space $\A^{m,p}_{n,N;\ell}(B^c_R)$ of
functions $u\in H^{m,p}_{loc}(B^c_R)$ such that,
\[
u(x)=\Big(\frac{a_n^0+\cdots a_n^{n+\ell}(\log r)^{n+\ell}}{r^n}+\cdots+
\frac{a_N^0+\cdots a_N^{N+\ell}(\log r)^{N+\ell}}{r^N}\Big)+f(x)\,,
\]
where $a_k^j\in H^{m+1+N-k}(\s^{d-1})$, $0\le j\le k+\ell$ and $n\le k\le N$, and 
$f\in W^{m,p}_{\gamma_N}(B^c_R)$. The space $f\in W^{m,p}_{\gamma_N}(B^c_R)$ is defined in the same
way as $W^{m,p}_{\gamma_N}(\R^d)$ with the only difference that $\R^d$ is replaced by $B^c_R$.
The space  $\A^{m,p}_{n,N;\ell}(B^c_R)$ is equipped with the norm,
\begin{equation}\label{eq:the_norm_complement}
\|u\|_{\A^{m,p}_{n,N;\ell}(B^c_R)}:=
\sum_{n\le k\le N, 0\le j\le k+\ell}\frac{(\log R)^j}{R^k}\|a_k^j\|_{ H^{m+1+N-k,p}(\s^{d-1})}+
\|f\|_{W^{m,p}_{\gamma_N}(B^c_R)}\,.
\end{equation}
Note that for $u\in\A^{m,p}_{n,N;\ell}(B^c_{R_0})$, $\|u\|_{\A^{m,p}_{n,N;\ell}(B^c_R)}\to 0$ as $R\to\infty$.
It is also clear that the restriction $u\mapsto u|_{B^c_r}$, $\A^{m,p}_{n,N;\ell}(\R^d)\to\A^{m,p}_{n,N;\ell}(B^c_R)$
is bounded.

\medskip

Denote $B_R:=\big\{x\in\R^d\,\big|\,|x|<R\big\}\subseteq\R^d$. Items $(a)$ and $(c)$ of the following 
Lemma are proved in the same way as Lemma 6.9 in \cite{McOwenTopalov1}. The proof of item $(b)$ follows the lines of 
the proof of Lemma \ref{lem:properties_log_spaces}.

\begin{Lem}\label{lem:asymptotic_spaces_complement}
Assume that $m>\frac{d}{p}$, $0\le n\le N$, $-n\le \ell$, $R>0$. Then we have:
\begin{itemize}
\item[$(a)$] If $u\in\A^{m,p}_{n,N;\ell}(B^c_R)$ for some $R>0$ then there exists
${\tilde u}\in\A^{m,p}_{n,N;\ell}(\R^d)$ such that ${\tilde u}|_{B^c_R}=u$.
Moreover, $u\in\A^{m,p}_{n,N;\ell}(\R^d)$ if and only if the restriction $u|_{B^c_R}\in
\A^{m,p}_{n,N;\ell}(B^c_R)$ and 
$u|_{B_{R'}}\in H^{m,p}(B_{R'})$ for some $R'>R$. The norm $\|u\|_{\A^{m,p}_{n,N;\ell}(\R^d)}$ is 
equivalent to the norm $\|u\|_{\A^{m,p}_{n,N;\ell}(B^c_R)}+\|u\|_{H^{m,p}(B_{R'})}$;
\item[$(b)$] The statement of Lemma \ref{lem:properties_log_spaces} holds with $\R^d$ replaced by $B^c_R$ in 
the notation of the asymptotic spaces;
\item[$(c)$] Assume in addition that $-n\le\ell\le 0$. 
Then there exist $C>0$ and $R_0>0$ such that for any $R\ge R_0$ for any
$u\in W^{m,p}_{\gamma_N}(B_R^c)$ and for any $v\in\A^{m,p}_{n,N;\ell}(B^c_R)$ we have 
$u\cdot v\in W^{m,p}_{\gamma_N}(B_R^c)\subseteq\A^{m,p}_{n,N;\ell}(B^c_R)$ and
\[
\|u\cdot v\|_{\A^{m,p}_{n,N;\ell}(B^c_R)}\le C \|u\|_{\A^{m,p}_{n,N;\ell}(B^c_R)}
\|v\|_{\A^{m,p}_{n,N;\ell}(B^c_R)}.
\]
\end{itemize}
\end{Lem}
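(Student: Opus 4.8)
All three items are \emph{localization at infinity} statements, and my plan follows the scheme of \cite[Lemma~6.9]{McOwenTopalov1}: on $B^c_R$ with $R\ge 2$ the cut-off $\chi$ equals $1$ and $\x\asymp|x|\asymp r$, so only the behaviour as $r\to\infty$ matters. For $(a)$ I would write $u=h+f$ on $B^c_R$, where $h=\sum_{n\le k\le N,\ 0\le j\le k+\ell}a_k^j(\theta)(\log r)^j/r^k$ is the finite asymptotic head and $f\in W^{m,p}_{\gamma_N}(B^c_R)$. The head extends to $\R^d$ simply as $\chi(r)\,h\in\A^{m,p}_{n,N;\ell}(\R^d)$ --- this is literally the definition of the asymptotic space --- while $f$ extends across the sphere $|x|=R$ to some $\tilde f\in W^{m,p}_{\gamma_N}(\R^d)$ via a Sobolev extension operator for the exterior domain $B^c_R$ (the weight $\x$ is comparable to a constant near $\partial B^c_R$, so this reduces to the unweighted case). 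Patching $\chi h$ and $\tilde f$ with a cut-off supported in $\{|x|>R\}$ yields $\tilde u\in\A^{m,p}_{n,N;\ell}(\R^d)$ with $\tilde u|_{B^c_R}=u$. For the ``moreover'' part, the forward implication is the boundedness of the restriction map together with the trivial inclusion $\A^{m,p}_{n,N;\ell}\subseteq H^{m,p}_{loc}$; for the converse, given $u$ with $u|_{B^c_R}\in\A^{m,p}_{n,N;\ell}(B^c_R)$ and $u|_{B_{R'}}\in H^{m,p}(B_{R'})$ with $R'>R$, I would fix $\zeta\in C^\infty$ with $\zeta\equiv1$ on $B^c_{R'}$ and $\supp\zeta\subseteq B^c_R$ and write $u=(1-\zeta)u+\zeta u$: the first summand lies in $H^{m,p}_c\subseteq W^{m,p}_{\gamma_N}\subseteq\A^{m,p}_{n,N;\ell}(\R^d)$, and $\zeta u=\chi h+(\zeta-\chi)h+\zeta f$ with $(\zeta-\chi)h$ compactly supported and $H^{m,p}$ and $\zeta f\in W^{m,p}_{\gamma_N}(\R^d)$, so $\zeta u\in\A^{m,p}_{n,N;\ell}(\R^d)$. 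The norm equivalence then follows from the open mapping theorem (or by tracking constants through this decomposition).

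For $(b)$ I would re-run the proof of Lemma~\ref{lem:properties_log_spaces} from \cite[Appendix~B]{McOwenTopalov2}. That proof uses only: algebraic identities for the heads (expanding and differentiating finite sums $\sum a_k^j(\theta)(\log r)^j/r^k$ and absorbing the supercritical tail into the weighted remainder), the Banach-algebra property of $W^{m,p}_\delta$ for $\delta\ge-d/p$ together with the weighted product estimate, and the Sobolev embedding $W^{m,p}_\delta\hookrightarrow C^0$ with decay. Each of these is obtained over $B^c_R$ ($R\ge2$) by verbatim the same computations, since $B^c_R\subseteq B^c_2$, the weight is comparable to $r$, and the H\"older/Sobolev arguments localize to $\{r\ge2\}$; hence the conclusions transfer (for each fixed $R$).

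Item $(c)$ is the substantive one, where the whole point is to track the dependence on $R$. I would decompose $v=h+g$ as above, so $uv=u\,h+u\,g$. Since $\gamma_N=N+\gamma_0$ with $\gamma_0>-d/p$, $W^{m,p}_{\gamma_N}(B^c_R)$ is a Banach algebra with a constant that may be taken uniform in $R\ge2$ (inherited from $W^{m,p}_{\gamma_N}(B^c_2)$), so $u\,g\in W^{m,p}_{\gamma_N}(B^c_R)$ with $\|u\,g\|\le C\|u\|\,\|g\|$. For $u\,h$ I would estimate each head term separately: by the Leibniz rule, the weighted H\"older inequality, and Sobolev embedding on $\s^{d-1}$, distributing the weight $\x^{\gamma_N+|\beta|}=\x^{\gamma_N+|\beta_1|}\cdot\x^{|\beta_2|}$ between $u$ and the head and using $\x^{|\beta_2|}(\log r)^j r^{-k-|\beta_2|}\lesssim(\log r)^j r^{-k}$ on $\{r\ge2\}$, one gets
\[
\big\|\x^{\gamma_N+|\beta|}\partial^\beta\big(u\cdot a_k^j(\theta)(\log r)^j/r^k\big)\big\|_{L^p(B^c_R)}\ \lesssim\ \Big(\sup_{r\ge R}\tfrac{(\log r)^j}{r^k}\Big)\,\|a_k^j\|_{H^{m+1+N-k,p}(\s^{d-1})}\,\|u\|_{W^{m,p}_{\gamma_N}(B^c_R)}
\]
(for $k=0$, which occurs only when $n=0$, and then $\ell=0$ and $j=0$, the bracketed factor is just $1$). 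Because the assumption $-n\le\ell\le0$ gives $j\le k+\ell\le k$, hence $j/k\le1$, any $R_0\ge3$ makes each of the finitely many functions $r\mapsto(\log r)^j r^{-k}$ with $k\ge1$ non-increasing on $[R_0,\infty)$, so the supremum equals $(\log R)^j R^{-k}$ for $R\ge R_0$ --- precisely the normalizing factor in front of $\|a_k^j\|_{H^{m+1+N-k,p}(\s^{d-1})}$ in the definition \eqref{eq:the_norm_complement} of $\|v\|_{\A^{m,p}_{n,N;\ell}(B^c_R)}$. Summing over $|\beta|\le m$ and over the head indices and adding the $u\,g$ contribution gives $uv\in W^{m,p}_{\gamma_N}(B^c_R)$ and $\|uv\|_{\A^{m,p}_{n,N;\ell}(B^c_R)}=\|uv\|_{W^{m,p}_{\gamma_N}(B^c_R)}\le C\|u\|_{W^{m,p}_{\gamma_N}(B^c_R)}\|v\|_{\A^{m,p}_{n,N;\ell}(B^c_R)}$ with $C$ independent of $R\ge R_0$ (using $\|u\|_{W^{m,p}_{\gamma_N}(B^c_R)}=\|u\|_{\A^{m,p}_{n,N;\ell}(B^c_R)}$ since $u$ is a pure remainder). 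The main obstacle is exactly this bookkeeping: $u$'s full weighted norm must be preserved while the entire weight is pushed onto the head factor, so that the resulting powers of $R$ and $\log R$ can be recognized as the built-in normalization of the $B^c_R$-norm and the constant comes out genuinely uniform in $R$; by contrast, in $(a)$ and $(b)$ the only mild nuisance is the Sobolev extension across $|x|=R$ and checking that the familiar constants over $\R^d$ survive restriction to $\{r\ge2\}$.
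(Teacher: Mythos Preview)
Your proposal is correct and matches the paper's own treatment: the paper does not give a detailed proof but states that items $(a)$ and $(c)$ ``are proved in the same way as Lemma~6.9 in \cite{McOwenTopalov1}'' and that $(b)$ ``follows the lines of the proof of Lemma~\ref{lem:properties_log_spaces}'', which is precisely the scheme you outline. Your handling of $(c)$ in particular --- pushing the full weight onto $u$, bounding the head factor by $\sup_{r\ge R}(\log r)^j/r^k$, and then using $j\le k+\ell\le k$ together with monotonicity on $[R_0,\infty)$ to identify this supremum with the normalizing factor $(\log R)^j/R^k$ in \eqref{eq:the_norm_complement} --- is exactly the mechanism that makes the constant uniform in $R$, and is the content behind the reference to \cite{McOwenTopalov1}.
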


\noindent  Recall that $\Id$ the identity $d\times d$-matrix and
$[{\rm d}_x\varphi]=\big(\frac{\partial\varphi^k}{\partial x_l}(x)\big)_{1\le k,l\le d}$. 
Lemma  \ref{lem:asymptotic_spaces_complement} and the arguments used in the proof of Lemma 6.10 
in \cite{McOwenTopalov1} imply

\begin{Lem}\label{lem:inverse_jacobian}
The mapping,
\[
\varphi\mapsto [{\rm d}\varphi]^{-1}-\Id,\,\,\,\A D^{m,p}_{0,N;0}\to\A^{m-1,p}_{1,N+1;-1}\,,
\]
is real-analytic.
\end{Lem}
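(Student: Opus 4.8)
\textit{Proof proposal.} First, write $\varphi=\id+w$ with $w\in\A^{m,p}_{0,N;0}$, so that $[{\rm d}\varphi]=\Id+[{\rm d}w]$ and, by Lemma~\ref{lem:properties_log_spaces}~$(c)$, the matrix $[{\rm d}w]$ has all entries in $\A^{m-1,p}_{1,N+1;-1}$. Since $\varphi\in\Diff^1_+(\R^d)$ we have $\det[{\rm d}_x\varphi]>0$ for every $x$, so the pointwise inverse $[{\rm d}_x\varphi]^{-1}$ exists; the plan is to show first that $[{\rm d}\varphi]^{-1}-\Id\in\A^{m-1,p}_{1,N+1;-1}$ for every $\varphi\in\A D^{m,p}_{0,N;0}$, and then to upgrade this to real-analyticity by a local power-series argument. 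The crux is that invertibility of $[{\rm d}_x\varphi]$ is only a \emph{pointwise} statement, and in particular does not make $\|[{\rm d}w]\|_{\A^{m-1,p}_{1,N+1;-1}}$ small, so a single global Neumann series is not available — one has to localize, both near spatial infinity and, for the analyticity part, near a given $\varphi_0$.

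For the set-theoretic claim I would use Lemma~\ref{lem:asymptotic_spaces_complement}~$(a)$ to reduce membership of a function in $\A^{m-1,p}_{1,N+1;-1}(\R^d)$ to membership in $\A^{m-1,p}_{1,N+1;-1}(B_R^c)$ for large $R$ together with membership in $H^{m-1,p}(B_{R'})$ for some $R'>R$. For the exterior part, $\|[{\rm d}w]\|_{\A^{m-1,p}_{1,N+1;-1}(B_R^c)}\to 0$ as $R\to\infty$, so for $R$ large the Neumann series $\sum_{k\ge 1}(-1)^k[{\rm d}w]^k$ converges in the Banach algebra $\A^{m-1,p}_{1,N+1;-1}(B_R^c)$ (Lemma~\ref{lem:asymptotic_spaces_complement}~$(b)$) and equals $[{\rm d}\varphi]^{-1}-\Id$ there. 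For the interior part I would use the cofactor formula $[{\rm d}\varphi]^{-1}=\big(\det[{\rm d}\varphi]\big)^{-1}\,\mathrm{adj}\,[{\rm d}\varphi]$: the entries of $\mathrm{adj}\,[{\rm d}\varphi]$ are polynomials in the entries of $[{\rm d}w]$ and hence lie in the algebra $H^{m-1,p}(B_{R'})$ ($m-1>d/p$), while $\det[{\rm d}\varphi]$ is continuous and strictly positive, hence bounded below on $\overline{B_{R'}}$, so $\big(\det[{\rm d}\varphi]\big)^{-1}\in H^{m-1,p}(B_{R'})$ by stability of $H^{m-1,p}\cap L^\infty$ under composition with the real-analytic function $t\mapsto 1/t$. (Alternatively one can avoid the interior estimate: by Theorem~\ref{th:the_group}, $\varphi^{-1}\in\A D^{m,p}_{0,N;0}$, so $[{\rm d}\varphi^{-1}]-\Id\in\A^{m-1,p}_{1,N+1;-1}$, and since $[{\rm d}\varphi]^{-1}=[{\rm d}\varphi^{-1}]\circ\varphi=R_\varphi\big([{\rm d}\varphi^{-1}]\big)$, Corollary~\ref{coro:invariance_1} gives $[{\rm d}\varphi]^{-1}-\Id=R_\varphi\big([{\rm d}\varphi^{-1}]-\Id\big)\in\A^{m-1,p}_{1,N+1;-1}$.)

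For real-analyticity I would fix $\varphi_0=\id+w_0\in\A D^{m,p}_{0,N;0}$ and put $P_0:=[{\rm d}\varphi_0]^{-1}$, so $P_0-\Id\in\A^{m-1,p}_{1,N+1;-1}$ by the previous step. For $\varphi=\id+w$ near $\varphi_0$ I factor $[{\rm d}\varphi]=(\Id+G)[{\rm d}\varphi_0]$ with $G:=[{\rm d}(w-w_0)]\,P_0$, which depends linearly and boundedly on $w-w_0\in\A^{m,p}_{0,N;0}$ with values in the Banach algebra $\A^{m-1,p}_{1,N+1;-1}$ (Lemma~\ref{lem:properties_log_spaces}~$(c)$ together with $P_0=\Id+(P_0-\Id)$); in particular $\|G\|_{\A^{m-1,p}_{1,N+1;-1}}$ is small for $w$ close to $w_0$. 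Inverting the factorization gives
\[
[{\rm d}\varphi]^{-1}-\Id=(P_0-\Id)+P_0\sum_{k\ge 1}(-1)^k G^k ,
\]
and for $w$ sufficiently close to $w_0$ the series on the right is an absolutely convergent sum of bounded homogeneous polynomials in $w-w_0$ (each $G^k$ lies in $\A^{m-1,p}_{1,N+1;-1}$ by the algebra property, with $\|G^k\|$ decaying geometrically in $k$), hence defines a real-analytic map into $\A^{m-1,p}_{1,N+1;-1}$; left multiplication by the fixed element $P_0$ and addition of the fixed element $P_0-\Id$ preserve real-analyticity. Since $\varphi_0$ is arbitrary, $\varphi\mapsto[{\rm d}\varphi]^{-1}-\Id$ is real-analytic on all of $\A D^{m,p}_{0,N;0}$.

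The step I expect to be the main obstacle is the set-theoretic one: reconciling the pointwise invertibility condition with the asymptotic function spaces, which forces the split into an exterior Neumann-series estimate and an interior $H^{m-1,p}$-regularity estimate for $1/\det[{\rm d}\varphi]$, glued by Lemma~\ref{lem:asymptotic_spaces_complement}~$(a)$. Once that is in place, the linearity of $w\mapsto[{\rm d}w]$, the algebra estimates, and the standard power-series characterization of real-analyticity make the rest routine.
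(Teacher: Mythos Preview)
Your proposal is correct and follows essentially the same strategy the paper indicates: the paper's ``proof'' of this lemma is simply the sentence that Lemma~\ref{lem:asymptotic_spaces_complement} together with the arguments in \cite[Lemma~6.10]{McOwenTopalov1} imply the result, and your write-up spells out precisely those arguments --- the exterior/interior splitting via Lemma~\ref{lem:asymptotic_spaces_complement}~$(a)$, a Neumann series on $B_R^c$ where $\|[{\rm d}w]\|_{\A^{m-1,p}_{1,N+1;-1}(B_R^c)}\to 0$, and then the local power-series expansion in the Banach algebra $\A^{m-1,p}_{1,N+1;-1}$ around a fixed $\varphi_0$ for real-analyticity. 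One small refinement worth making explicit in the exterior step: the Banach algebra constant for $\A^{m-1,p}_{1,N+1;-1}(B_R^c)$ could in principle depend on $R$, so to be safe you should handle the tail of the series as the paper does in the proof of Lemma~\ref{lem:composition*} (cf.\ \eqref{eq:convergence}), namely by noting that $[{\rm d}w]^k\in W^{m-1,p}_{\gamma_{N+1}}(B_R^c)$ for $k\ge N+2$ and invoking the $R$-uniform constant of Lemma~\ref{lem:asymptotic_spaces_complement}~$(c)$.
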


\medskip

\noindent The following Remarks clarify some choices made in the definition of the asymptotic spaces.

\begin{Rem}\label{rem:the_cut-off}
If we replace the cut-off function $\chi(r)$ in the definition of the asymptotic space
$\A^{m,p}_{n,N;\ell}$ by any other cut-off function $\chi\in C^\infty(\R)$, $\chi(r)=1$ for $r>R_2$, and 
$\chi(r)=0$ for $0\le r\le R_1$ where $0<R_1<R_2$, we will get the same asymptotic space equipped with 
an equivalent norm (see Remark 2.2 in \cite{McOwenTopalov2}).
\end{Rem}

\begin{Rem}\label{rem:the_sphere}
Take $0<R_1<R_2$ and consider the annulus $B_{R_1,R_2}:=\big\{x\in\R^d\,\big|\,R_1<|x|<R_2\big\}\subseteq\R^d$.
Let $a\in H^{m,p}(\s^{d-1})$, $m>d/p$, and let us denote by ${\hat a}$ the homogeneous function of degree zero
in $\R^d\setminus\{0\}$, ${\hat a}(x):=a(\theta)$, $\theta=\frac{x}{|x|}$. 
Then one can easily confirm that ${\hat a}\in H^{m,p}(B_{R_1,R_2})$ and that the map
$a\mapsto{\hat a}$, $H^{m,p}(\s^{d-1})\to H^{m,p}(B_{R_1,R_2})$ is bounded.
\end{Rem}

\section{Examples}\label{sec:examples}
In this Appendix we consider two Examples. In the first one we 
construct a divergence free asymptotic vector field without log terms so that the
corresponding solution of the Euler equation develops log terms.
In the second example we construct a divergence free vector field with
compact support that develops a non-vanishing asymptotic term $\chi(r)\,a_{d+1}(\theta)/r^{d+1}$.

\medskip

\noindent{\em Example 1.}
Here we construct a divergence free vector field 
$u_0\in\A^{m,p}_3$ such that the solution 
$u\in C^0\big([0,T],\accentset{\,\,\,\circ}\A^{m,p}_{3;0}\big)\cap 
C^1\big([0,T],\accentset{\,\,\,\circ}\A^{m-1,p}_{3;0}\big)$ 
given by Theorem \ref{th:main} has log terms, i.e.\ there exists $0<\tau\le T$ such that
\begin{equation*}
u(\tau)\in\accentset{\,\,\,\circ}\A^{m,p}_{3;0}\setminus\A^{m,p}_3\,.
\end{equation*}
This shows that $\A^{m,p}_N$ with $N\ge 3$ is {\em not} invariant under the Euler flow, and establishes the
necessity of using the asymptotic spaces with log terms.
For simplicity, we restrict our attention to the case when $d=2$. In this case we can identify the complex plane 
$\C$ with $\R^2$ and use complex notations. In particular, we set $z=x+ i y=e^{i\phi}$, $\phi\in\R/2\pi\Z$, and
note that $\Delta=4\partial_z\partial_{\bar z}$ where $\partial_z=(\partial_x-i\partial_y)/2$ and
$\partial_{\bar z}=(\partial_x+i\partial_y)/2$.
Let $H$ be a real valued $C^\infty$-smooth Hamiltonian function on $\R^2=\{(x,y)\}$ and let $u_0=(-H_y, H_x)$ be 
the corresponding Hamiltonian vector field. Then, by a straightforward  computation, we have
\begin{equation}\label{eq:Q(d=2)}
Q(u_0)=\tr\big([{\rm d}u_0]^2\big)=-2\det\left[
\begin{array}{cc}
H_{xx}&H_{xy}\\
H_{yx}&H_{yy}
\end{array}
\right]
=8\det\left[
\begin{array}{cc}
H_{zz}&H_{z{\bar z}}\\
H_{{\bar z}z}&H_{{\bar z}{\bar z}}
\end{array}
\right]\,.
\end{equation}
Now, consider the Hamiltonian
\[
H:=\left[\left(\frac{z}{\bar z}+\frac{\bar z}{z}\right)+
\alpha\left(\Big(\frac{z}{\bar z}\Big)^2+\Big(\frac{\bar z}{z}\Big)^2\right)\right]\chi(r),
\]
where $\alpha\ne 0$ is a given constant.
Then we have
$H_z=\left[-\frac{\bar z}{z^2}+\frac{1}{\bar z}+
2\alpha\Big(\frac{z}{{\bar z}^2}-\frac{{\bar z}^2}{z^3} \Big)\right]\chi(r)+C^\infty_{comp}$
where $C^\infty_{comp}$ stands for a term in $C^\infty$ with compact support.
This implies that
\[
H_{z{\bar z}}=\left[-\frac{1}{{\bar z}^2}-\frac{1}{z^2}-
4\alpha\Big(\frac{\bar z}{z^3}+\frac{z}{{\bar z}^3}\Big)\right]\chi(r)+C^\infty_{comp},\quad
H_{zz}=\left[2\frac{\bar z}{z^3}+6\alpha\frac{{\bar z}^2}{z^4}+2\alpha\frac{1}{{\bar z}^2}\right]\chi(r)+
C^\infty_{comp}\,.
\]
As $H$ is real valued we have $H_{{\bar z}{\bar z}}=\overline{(H_{zz})}$.
This and \eqref{eq:Q(d=2)} then imply that
\[
Q(u_0)=32\alpha\left(\frac{1}{z{\bar z}^3}+\frac{1}{{\bar z}z^3}\right)\chi(r)+\cdots=
32\alpha\,\frac{e^{2i\phi}+e^{-2i\phi}}{r^4}\chi(r)+\cdots ,
\]
where $...$ stands for a term in $C^\infty$ with compact support plus an asymptotic term $\frac{a(\phi)}{r^4}\chi(r)$ 
such that the Fourier series of $a(\phi)$ does not contain exponents $e^{\pm 2i\phi}$. Then, in view of
Lemma \ref{lem:inverse_laplacian_remainder_space} $(a)$, formula \eqref{eq:Delta(l=1)} and \eqref{eq:Delta(l=0)},
\[
\Delta^{-1} \circ Q(u_0)=-4\alpha\log(z{\bar z})\,\Big(\frac{1}{z^2}+\frac{1}{{\bar z}^2}\Big)\chi(r)+\cdots,
\] 
where $...$ stands for an asymptotic term of the form
$\big(a_0^*\log r+\frac{a_1(\phi)}{r}+\frac{a_2(\phi)}{r^2}\big)\chi(r)$, where $a_0^*=\const$ and
$a_1(\phi)$ and $a_2(\phi)$ are trigonometric polynomials of $\phi\in\R/2\pi\Z$, and a reminder term in 
$W^{m+1,p}_{\gamma_2}$.
This shows that $\nabla\circ\Delta^{-1} \circ Q(u_0)$ has a non-vanishing asymptotic term 
$\frac{b(\phi)\log r}{r^3}\chi(r)$ where $b(\phi)$ is an $\R^2$-valued trigonometric polynomial of $\phi$. 
This together with Theorem \ref{th:main} and Lemma \ref{lem:no_pressure_term} 
then shows that, for $0<\tau\le T$ small enough, $u(\tau)\in\accentset{\,\,\,\circ}\A^{m,p}_{3;0}\setminus\A^{m,p}_3$.
Hence, the space $\A^{m,p}_3$ is {\em not} invariant with respect to the Euler flow.
Finally, note that the constructed divergence free vector field $u_0$ is of the form
\[
u_0=\frac{c(\phi)}{r}\chi(r)+C^\infty_{comp}
\] 
where $c(\phi)$ is an $\R^2$-valued trigonometric polynomial of $\phi\in\R/2\pi\Z$.

\medskip\medskip

\noindent{\em Example 2.}
Below, for the convenience of the reader, we construct a divergence free $C^\infty$-smooth vector field with 
compact support so that the solution of the Euler equation given by Theorem \ref{th:main} has a non-vanishing 
asymptotic term (cf. \cite{DobrShaf} for a different argument).
As above, we restrict our attention to the case $d=2$, identify $\R^2$ with the complex plane $\C$, and
use complex notations. 
Let $a :\R\to\R$ be a $C^\infty$-smooth function so that $a(\varrho)=1$ for $|\varrho|\le 1$ and
$a(\varrho)=0$ for $|\varrho|\ge 2$. Consider the Hamiltonian
\[
H:=\big(z+{\bar z}\big) a
\]
where $a$ stands for $a(z{\bar z})$ and let
$u_0=\big(-H_y,H_x\big)$ be the Hamiltonian vector field of $H$. We have
\[
H_{\bar z}= z \big(z+{\bar z}\big) a'+a
\]
where $a'$ stands for $a'(z{\bar z})$.
Continuing this computation, we have
\[
H_{z{\bar z}}=\Big((z{\bar z})a''+2 a'\Big)\big(z+{\bar z}\big)
\]
\[
H_{{\bar z}{\bar z}}=\Big((z{\bar z})a''+2 a'\Big)z+z^3a''
\]
and, as $H_{zz}=\overline{(H_{{\bar z}{\bar z}})}$,

\[
H_{zz}=\Big((z{\bar z})a''+2 a'\Big){\bar z}+{\bar z}^3a''.
\]
Then, by \eqref{eq:Q(d=2)}, we have
\begin{eqnarray*}
Q(u_0)&=&8 \big(H_{zz} H_{{\bar z}{\bar z}}-(H_{z{\bar z}})^2\big)\\
&=&-8\Big(2(z{\bar z})a'a''+4(a')^2\Big)\big(z^2+{\bar z}^2\big)+\cdots,
\end{eqnarray*}
where $\cdots$ stands for a term that dependents only on $|z|$.
Further, consider the integral
\[
\iint\limits_{\R^2} Q(u_0)\,{\bar z}^2\,{\rm d}x\,{\rm d}y=-8\pi\int_0^\infty\!\!\!\!A(\varrho)\,{\rm d}\varrho
\]
where $A(\varrho):=2a'(\varrho)a''(\varrho)\varrho^3+4(a'(\varrho))^2\varrho^2$ and we passed to the new variable
$\varrho=r^2$ in the integral. As $A(\varrho)=\big((a'(\varrho))^2\varrho^3\big)'+(a'(\varrho))^2\varrho^2$ we
obtain that $\int_0^\infty A(\varrho)\,d\varrho=\int_0^\infty(a'(\varrho))^2\varrho^2\,{\rm d}\varrho$, and
hence
\begin{equation}
\iint\limits_{\R^2} Q(u_0)\,(x^2-y^2)\,{\rm d}x\,{\rm d}y<0\,.
\end{equation}
In view of Lemma \ref{lem:vanishing_lemma} below we have
$\iint_{\R^2}Q(u_0)\,{\rm d}x\,{\rm d}y=0$.
This together with Lemma \ref{lem:inverse_laplacian_remainder_space} and Remark \ref{rem:leading_term} then 
implies that $\Delta^{-1}\circ Q(u_0)$ belongs to the asymptotic space $\A^{m+1,p}_{2,N;0}$ for any given $N\ge 3$ and
\[
\Delta^{-1}\circ Q(u_0)=\frac{a_2(\theta)}{r^2}\chi(r)+\cdots
\]
with $a_2\ne 0$. Theorem \ref{th:main} and Lemma \ref{lem:no_pressure_term} 
then show that the solution
$u\in C^0\big([0,T],\accentset{\,\,\,\circ}\A^{m,p}_{N;0}\big)\cap 
C^1\big([0,T],\accentset{\,\,\,\circ}\A^{m-1,p}_{N;0}\big)$
of the Euler equation with initial data $u_0$ constructed above
has a non-vanishing asymptotic term $\chi(r) a_3(\theta)/r^3$. 

\begin{Rem}
In view of Lemma \ref{lem:vanishing_lemma} below, Lemma \ref{lem:inverse_laplacian_remainder_space} and 
Remark \ref{rem:leading_term}, there is no $C^\infty$-smooth divergence free 
vector field $u_0$ with compact support in $\R^2$ so that $\Delta^{-1}\circ Q(u_0)$ has non-vanishing asymptotic terms 
$\chi(r)\,a_0^*\log r$ or $\chi(r)\,a_1(\theta)/r$. A similar statement also holds for $d\ge 3$.
\end{Rem}

\begin{Rem}\label{rem:general_position}
Denote by $\accentset{\circ}{\mathcal S}$ the linear space of divergence free vector fields in $\R^2$ whose
components belong to the Schwartz class of functions ${\mathcal S}(\R^2)$.  
Example 2 shows that the bounded quadratic form $u\mapsto\iint_{\R^2} Q(u)\,(x^2-y^2)\,{\rm d}x\,{\rm d}y$, 
$\accentset{\circ}{\mathcal S}\to\R$, 
does not identically vanish. As this is a real analytic function on $\accentset{\circ}{\mathcal S}$ we see that
there is an open and dense set of vector fields $u_0$ in $\accentset{\circ}{\mathcal S}$ so that for any 
$N\ge 3$ the solution 
$u\in C^0\big([0,T],\accentset{\,\,\,\circ}\A^{m,p}_{N;0}\big)\cap 
C^1\big([0,T],\accentset{\,\,\,\circ}\A^{m-1,p}_{N;0}\big)$
of the Euler equation with initial data $u_0$ has a non-trivial asymptotic term $\chi(r) a_3(\theta)/r^3$.
Moreover, one can conclude from Remark \ref{rem:analyticity_in_t} that for almost all $t\in[0,T]$ the asymptotic
part of the solution $u(t)$ is non-trivial.
Similar statements also hold for $d\ge 3$.
\end{Rem}

\begin{Lem}\label{lem:vanishing_lemma}
Let $u_0$ be a $C^\infty$-smooth divergence free vector field with compact support in $\R^d$, $d\ge 2$.
Then 
\[
\int_{\R^d}Q(u_0)\,{\rm d}x_1\ldots{\rm d}x_d=0\quad\text{and}\quad
\int_{\R^d}x_k\,Q(u_0)\,{\rm d}x_1\ldots{\rm d}x_d=0
\]
for any $1\le k\le d$.
\end{Lem}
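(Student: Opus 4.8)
The plan is to exhibit $Q(u_0)=\tr\big([\mathrm{d}u_0]^2\big)$ as a divergence and then integrate by parts, using that $u_0$ has compact support to kill all boundary terms. Recall from \eqref{eq:the_trick1} that for a divergence free vector field $u_0$ one has $\Div\big(u_0\cdot\nabla u_0\big)=\tr\big([\mathrm{d}u_0]^2\big)+u_0\cdot\nabla(\Div u_0)=Q(u_0)$. So $Q(u_0)=\Div(u_0\cdot\nabla u_0)=\sum_{j,k}\partial_j\big(u_0^k\,\partial_k u_0^j\big)$, and since $\Div u_0=0$ we can further write $u_0^k\partial_k u_0^j=\partial_k(u_0^k u_0^j)$; hence $Q(u_0)=\sum_{j,k}\partial_j\partial_k\big(u_0^j u_0^k\big)$, i.e.\ $Q(u_0)$ is a sum of second derivatives of the compactly supported $C^\infty$ functions $u_0^j u_0^k$.

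First I would prove the first identity: integrating $Q(u_0)=\sum_{j,k}\partial_j\partial_k(u_0^j u_0^k)$ over $\R^d$, each term is a total derivative of a compactly supported function, so $\int_{\R^d}Q(u_0)\,\mathrm{d}x=0$ by the divergence theorem (or simply the fundamental theorem of calculus in one variable applied to a function vanishing near $\pm\infty$). Next, for the weighted identity, multiply by $x_\ell$ and integrate by parts twice:
\[
\int_{\R^d} x_\ell\,\partial_j\partial_k\big(u_0^j u_0^k\big)\,\mathrm{d}x
=\int_{\R^d}\big(\partial_j\partial_k x_\ell\big)\,u_0^j u_0^k\,\mathrm{d}x=0,
\]
since $x_\ell$ is affine, so $\partial_j\partial_k x_\ell=0$; all boundary terms vanish because $u_0$ has compact support. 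Summing over $j,k$ gives $\int_{\R^d} x_\ell\,Q(u_0)\,\mathrm{d}x=0$ for every $\ell$.

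There is essentially no obstacle here — the only point requiring a tiny bit of care is justifying the rewriting $Q(u_0)=\sum_{j,k}\partial_j\partial_k(u_0^j u_0^k)$, which uses $\Div u_0=0$ exactly as in \eqref{eq:the_trick1}--\eqref{eq:the_trick2}, together with the smoothness and compact support of $u_0$ so that Fubini and repeated integration by parts are unproblematic. If one prefers not to use the identity $u_0^k\partial_k u_0^j=\partial_k(u_0^ku_0^j)$, one can instead work directly from $Q(u_0)=\Div(u_0\cdot\nabla u_0)$: the first claim is then immediate, and for the second, $\int x_\ell \Div V\,\mathrm{d}x=-\int (\partial_\ell x_\ell) V^\ell\,\mathrm{d}x=-\int V^\ell\,\mathrm{d}x$ with $V=u_0\cdot\nabla u_0$, so one must still check $\int_{\R^d}(u_0\cdot\nabla u_0)^\ell\,\mathrm{d}x=0$; but $(u_0\cdot\nabla u_0)^\ell=\sum_k u_0^k\partial_k u_0^\ell=\sum_k\partial_k(u_0^k u_0^\ell)$ using $\Div u_0=0$, which is again a divergence of a compactly supported function and so integrates to zero. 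Either route closes the lemma.
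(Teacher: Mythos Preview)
Your proof is correct and follows essentially the same approach as the paper: both use $Q(u_0)=\Div(u_0\cdot\nabla u_0)$ and then integrate by parts, with the key step for the second identity being that $(u_0\cdot\nabla u_0)^\ell=\sum_k\partial_k(u_0^k u_0^\ell)$ is itself a divergence. The only cosmetic difference is that the paper packages the integration by parts via Lie derivatives and Cartan's formula, while you work directly in coordinates; your first route through the identity $Q(u_0)=\sum_{j,k}\partial_j\partial_k(u_0^j u_0^k)$ is in fact a slight streamlining, since it handles both moments at once by noting that $x_\ell$ is killed by any second derivative.
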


\begin{proof}
The Lemma follows from \eqref{eq:the_trick2} and the Stokes' theorem.
In fact, let $u_0$ be a $C^\infty$-smooth divergence free vector field with compact support in $\R^d$, $d\ge 2$.
By \eqref{eq:the_trick2} 
\[
Q(u_0)=\Div X\quad\text{where}\quad X:=u_0\cdot\nabla u_0.
\]
As the vector field $X$ has compact support we see from the Stokes' theorem that
\[
\int_{\R^d}Q(u_0)\,{\rm d}x_1\ldots{\rm d}x_d=\int_{\R^d} (\Div X)\,{\rm d}x_1\ldots{\rm d}x_d=0.
\] 
In order to prove the second equality note that for any $1\le k\le d$ we have
\begin{equation}\label{eq:lie_derivative}
L_X\big(x_k\,{\rm d}x_1\wedge\ldots\wedge{\rm d}x_d\big)=
X_k\,{\rm d}x_1\wedge\ldots\wedge{\rm d}x_d+x_k (\Div X)\,{\rm d}x_1\wedge\ldots\wedge{\rm d}x_d
\end{equation}
where $L_X$ denotes the Lie derivative in the direction of the vector field $X$ and $X_k$ is the $k$-th
component of $X$. As
$L_X\big(x_k\,{\rm d}x_1\wedge\ldots\wedge{\rm d}x_d\big)=
{\rm d}\big(i_X(x_k{\rm d}x_1\wedge\ldots\wedge{\rm d}x_d)\big)$
and as $X$ has compact support we get from \eqref{eq:lie_derivative} and the Stokes' theorem that
\begin{equation}\label{eq:x_k}
\int_{\R^d}x_k(\Div X)\,{\rm d}x_1\ldots{\rm d}x_d=
-\int_{\R^d}X_k\,{\rm d}x_1\ldots{\rm d}x_d\,.
\end{equation}
Now, using that $X=u_0\cdot\nabla u_0$ and the fact that $u_0$ preserves the volume form 
${\rm d}x_1\wedge\ldots\wedge{\rm d}x_d$ we conclude that
$X_k\,{\rm d}x_1\wedge\ldots\wedge{\rm d}x_d=L_{u_0}\big(u_{0k}\,{\rm d}x_1\ldots{\rm d}x_d\big)=
{\rm d}\big(i_{u_0}(u_{0k}\,{\rm d}x_1\ldots{\rm d}x_d)\big)$ where $u_{0k}$ is the $k$-th component
of the vector field $u_0$. As $u_0$ has compact support we again conclude from
the Stokes' theorem that
\[
\int_{\R^d}X_k\,{\rm d}x_1\ldots{\rm d}x_d=0\,.
\]
This together with \eqref{eq:x_k} then completes the proof of the Lemma.
\end{proof}


\end{document}